\newcommand{\ci}{\operatorname{ci}}
\newcommand{\real}{\mathbb{R}}
\newcommand{\chf}{\mathbf{1}}
\newcommand{\Bdelta}{\delta^B}
\newcommand{\aff}{\operatorname{aff}}
\newcommand{\vol}{\operatorname{vol}}
\newcommand{\cS}{\mathcal{S}}
\newcommand{\cT}{\mathcal{T}}
\newcommand{\cK}{\mathcal{K}}
\newcommand{\cR}{\mathcal{R}}
\newcommand{\dotvar}{\,\cdot\,}
\newcommand{\perim}{\operatorname{per}}
\newcommand{\bd}{\operatorname{bd}}
\newcommand{\eps}{\varepsilon}
\newcommand{\len}{\operatorname{len}}
\newcommand{\length}{\operatorname{len}}
\newcommand{\intr}{\operatorname{int}}
\newcommand{\relint}{\operatorname{relint}}
\newcommand{\cl}{\operatorname{cl}}
\newcommand{\conv}{\operatorname{conv}}
\newcommand{\usphere}{\mathbb{S}}
\newcommand{\ucircle}{\mathbb{S}^1}
\newcommand{\dd}{\operatorname{d}}
\newcommand{\thmheader}[1]{{\upshape (#1)}}
\newcommand{\sprod}[2]{\left<#1,#2\right>}
\newcommand{\pairing}[2]{\left(#1,#2\right)}
\newcommand{\setcond}[2]{\left\{ #1 \,:\, #2\right\}}
\newcommand{\supp}{\operatorname{supp}}
\newcommand{\pderiv}[1]
{
	\mathop{\frac{\partial}{\partial #1}}
}
\newcommand{\pderivleft}[1]{\mathop{\frac{\partial^{-}}{\partial #1}}}
\newcommand{\rderiv}[1]{\mathop{\partial^+ #1}}
\newcommand{\wid}{w}
\newcommand{\Prob}{\mathop{\mathrm{Prob}}}
\newcommand{\wcov}[1]{g_{{#1,w}}}
\newcommand{\core}{\operatorname{core}}
\newcommand{\thecap}{\operatorname{cap}}
\newcommand{\arc}{\operatorname{arc}}
\newcommand{\cH}{\mathcal{H}}
\newcommand{\gperim}[1]{g_{#1,\perim_B}}
\newcommand{\gphi}[1]{g_{#1,\phi}}
\newcommand{\inspar}{\operatorname{ip}}
\newcommand{\EvalFor}[2]{
	\left. #1 \right|_{#2}
}
\newcommand{\PartialDerivAt}[3]
{ 
	\EvalFor{
		\pderiv{#2}
		#1 
	}{#2  = #3}
}
\newcommand{\LeftPartialDerivAt}[3]
{ 
	\EvalFor{
		\pderivleft{#2}
		#1 
	}{#2  = #3}
}
\newtheorem{theorem}{Theorem}[section]
\newtheorem{lemma}[theorem]{Lemma}
\newtheorem{proposition}[theorem]{Proposition}
\newtheorem{claim}{Claim}[theorem]
\theoremstyle{definition}
\newtheorem{remark}[theorem]{Remark}
\numberwithin{equation}{section}
\newenvironment{FigTab}[2]{
        \begin{figure}[htb]
        \setlength{\unitlength}{#2}
        \begin{center}
        \begin{tabular}{#1}
}{
    \end{tabular}
    \end{center}
    \end{figure}
}
\begin{document}

\title[Covariograms generated by valuations]{
	Covariograms
	generated by valuations
}
\author{
	Gennadiy Averkov
}
\address{
	Faculty of Mathematics, 
	University of Magdeburg, 
	Universit\"atsplatz 2, 
	D-39106 Magdeburg, 
	Germany
}
\email{
	averkov@math.uni-magdeburg.de
}
\author{
	Gabriele Bianchi
}
\address{
	Dipartimento di Matematica e Informatica ``U. Dini'', 
	Universit\`a di Firenze, 
	Viale Morgagni 67/A,
	I-50134 Firenze, Italy
}
\email{
	gabriele.bianchi@unifi.it
}

\thanks{The authors have been supported by the Gruppo
Nazionale per l’Analisi Matematica, la Probabilit\`a e le loro
Applicazioni (GNAMPA) of the Istituto Nazionale di Alta Matematica
(INdAM)}

\subjclass[2000]{Primary 52A38; 52B45;  Secondary 52A39; 60D05}
\keywords{covariogram; geometric tomography; random chord; random section; valuation}

\begin{abstract}
	Let \( \phi \) be a real-valued valuation on the family of compact convex subsets of $\real^n$ and let \( K \) be a convex body in \( \real^n \). We introduce the \( \phi \)-covariogram \( \gphi{K} \) of \( K \) as the function associating to each \( x \in \real^n \) the value \( \phi(K \cap (K+x)) \).
	If \( \phi \) is the volume, then \( \gphi{K} \) is the covariogram, extensively studied in various sources. When \( \phi \) is a quermassintegral (e.g., surface area or mean width) \( \gphi{K} \) has been introduced by Nagel \cite{nagel_habil}.

	We study various properties of \( \phi \)-covariograms, mostly in the case \( n=2 \) and under the assumption that \( \phi \) is  translation invariant, monotone and even.
	We also consider the generalization of Matheron's covariogram problem to the case of \( \phi \)-covariograms, that is, the problem of determining an unknown convex body \( K \), up to translations and point reflections, by the knowledge of \( \gphi{K} \).
	A positive solution to this problem is provided under different assumptions, including the case that \( K \) is a polygon and \( \phi \) is either strictly monotone or $\phi$ is the width in a given direction.
	We prove that there are examples in every dimension $n\geq3$ where $K$ is determined by its covariogram but it is not determined by its width-covariogram.
	We also present some consequence of this study in stochastic geometry.
\end{abstract}

\maketitle

\section{Introduction}

Let \( K \) be a convex body in \( \real^n \). The covariogram of \( K \) is the function \( g_K \) which associates to each \( x \in \real^n \) the volume of \( K \cap (K+x) \):
\begin{equation*}
	g_K(x) := \vol\left(K \cap (K+x)\right).
\end{equation*}
The data provided by \( g_K(x) \) can be interpreted in several ways within different contexts, using purely geometric, functional-analytic and probabilistic terminology. 
As a result, covariograms of convex bodies and other sets appear naturally in various research areas including convex geometry, image analysis, geometric shape and pattern matching, phase retrieval in Fourier analysis, crystallography and geometric probability. See Baake and Grimm~\cite{BaakeGrimm}, Bianchi, Gardner and Kiderlen~\cite{BiGaKi11} and references therein, Matheron~\cite{MR0385969} and Schymura \cite{Schymura-2011}.

The notion of volume can be naturally extended to the notion of valuation. (See Section~\ref{basic:sect} for all unexplained definitions.) Let \( \cK^n \) be the family of all compact, convex subsets of \( \real^n \) and let \( \phi : \cK^n \rightarrow \real \) be a valuation. We introduce the \( \phi \)-covariogram of \( K \) as the function \( \gphi{K} : \real^n \rightarrow \real \) defined for $x\in\real^n$ by
\begin{equation*} 
	\gphi{K}(x) := \phi(K \cap (K+x)).
\end{equation*}

Werner Nagel  in his Habilitationsschrift~\cite[pp.~68-69]{nagel_habil}
introduces \( \gphi{K} \) in the case that \( \phi \) is an arbitrary quermassintegral (this includes the case of volume, surface area and mean width). Gardner \& Zhang \cite[p.~524]{MR1623396} suggests to generalize \( g_K \) substituting the volume with an arbitrary log-concave measure in $\real^n$.
The $\phi$-covariogram appears naturally in some problems in stochastic geometry.
See later in the introduction for more on this point. 

We assume that $\phi$ belongs to the class $\Phi^n$ of real-valued, even, translation invariant valuations on $\cK^n$ which are monotone with respect to inclusion and which vanish on singletons.
The covariogram $g_K$ is clearly unchanged by a translation or a reflection of $K$ (the term reflection will always mean reflection at a point) and the assumption that $\phi$ is even and translation invariant forces $\gphi{K}$ to  maintain these   invariance properties.
The assumption that $\phi$ vanishes on singletons is not restrictive, as explained in Section~\ref{basic:sect}.

Most results in this paper are in the plane. Every $\phi\in\Phi^2$ can be decomposed in an unique way as
\begin{equation}\label{representation_valuation_perim}
 \phi(K)=\perim_B(K)+\alpha\vol(K),\quad \text{for each $K\in\cK^2$,}
\end{equation}
for a suitable $\alpha\geq0$ and an $o$-symmetric closed convex set $B$ with $o\in\intr B$ (see Theorem~\ref{repr phi thm}).
Here $\perim_B$ denotes the perimeter with respect to the seminorm associated to the unit ball $B$.
An alternative equivalent representation is
\begin{equation}\label{representation_valuation_mixedvol}
 \phi(K)=V(K,H)+\alpha\vol(K),\quad \text{for each $K\in\cK^2$,}
\end{equation}
where $H\in\cK^2$ is $o$-symmetric and nonempty and $V(K,H)$ denotes mixed area. A consequence of \eqref{representation_valuation_perim} is that for every planar convex body $K$ we have
\begin{equation} \label{decomposition_of_phi-cov}
        \gphi{K}  = \gperim{K} + \alpha g_{K}.
\end{equation} We call \( \gperim{K} \) the \emph{perimeter-covariogram}. When $B=\real^2$, the function $\gperim{K}$ vanishes and then $\gphi{K}=\alpha g_K$. When $B$ is the Euclidean unit ball, $\gperim{K}(x)$ is the usual Euclidean perimeter of $K\cap(K+x)$. When $B$ is the strip $\{x\in\real^2 : |\sprod{x}{z}|\leq 1\}$, for some $z\in\ucircle$, then $\gperim{K}(x)$ coincides with twice the width of $K\cap(K+x)$ with respect to $z$.

We study various aspects of \( \phi \)-covariograms, but the main part of the paper is devoted to the following problem.
\smallskip

\textbf{The $\phi$-covariogram problem.} \emph{Does the knowledge of $\phi$ and $\gphi{K}$ determine a
convex body $K$, within all convex bodies, up to translations and reflections?}
\smallskip

To make the statement of the above problem and the formulations of the following results precise, we clarify that we say that $K \in\ \cK^n$ is \emph{determined} by the knowledge of $\phi$ and $\gphi{K}$, within a family $\cH \subset \cK^n$, up to a group $\cT$ of transformations of $\real^n$ if the equality $\gphi{K} = \gphi{H}$ for $H \in \cH$ implies $K=T(H)$ for some $T \in \cT$.

The corresponding problem for the covariogram was posed by G.~Matheron in 1986 and has received much attention in recent years. Peter Gruber~\cite{gruber-private} suggested to study the $\phi$-covariogram problem in the case where $\phi$ is the Euclidean perimeter. We prove the following results.
\begin{theorem}
	\label{retrieval centr sym thm}
	Let \( \phi\in\Phi^2 \setminus \{0\} \) and let $K$ be a centrally symmetric planar convex body. Then $K$ is determined by the knowledge of $\phi$ and $g_{K,\phi}$,
	up to translations, within the class of all planar convex bodies.
\end{theorem}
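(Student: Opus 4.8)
The plan is to use the support of \(\gphi{K}\) to pin down the difference body of \(K\), then to reduce everything to comparing the \(B\)-perimeter and the area of \(K\) with those of a competitor, and finally to close the argument with the equality case of the Brunn--Minkowski inequality. Normalise \(K\) so that \(K=-K\), let \(H\) be any planar convex body with \(\gphi{H}=\gphi{K}\), and write \(\phi=\perim_B+\alpha\vol\) as in \eqref{representation_valuation_perim}; since \(\phi\neq0\) we have \(B\neq\real^2\) or \(\alpha>0\). For a convex body \(C\) set \(DC:=C+(-C)\), so that \(C\cap(C+x)\neq\emptyset\) iff \(x\in DC\). If \(x\) lies in the interior of \(DC\), then \(C\cap(C+x)\) has interior points, hence \(\vol(C\cap(C+x))>0\) and also \(\perim_B(C\cap(C+x))>0\) when \(B\neq\real^2\); in either case \(\gphi{C}(x)>0\) there, while \(\gphi{C}(x)=0\) off \(DC\). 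Thus \(\overline{\{x:\gphi{C}(x)>0\}}=DC\), and \(\gphi{H}=\gphi{K}\) forces \(DH=DK=2K\), i.e. \(\tfrac12 DH=K\).

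From \(\tfrac12 DH=K\) I extract two facts. Since the planar \(B\)-perimeter is additive under Minkowski addition, positively homogeneous of degree one, and unchanged by \(x\mapsto-x\) (because \(B=-B\)), we get \(\perim_B(K)=\perim_B\bigl(\tfrac12(H+(-H))\bigr)=\tfrac12\bigl(\perim_B(H)+\perim_B(-H)\bigr)=\perim_B(H)\); so the perimeter part of \(\phi\) already cannot tell \(H\) from \(K\). By the Brunn--Minkowski inequality applied to \(DH=H+(-H)\), \(\vol(K)=\tfrac14\vol(DH)\ge\vol(H)\), with equality if and only if \(H\) is centrally symmetric. Hence everything now hinges on proving \(\vol(H)=\vol(K)\).

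When \(\alpha>0\) this is immediate: \(\gphi{H}(0)=\phi(H)=\phi(K)=\gphi{K}(0)\) together with \(\perim_B(H)=\perim_B(K)\) gives \(\vol(H)=\vol(K)\). In general I would instead use the identity
\[
	\int_{\real^2}\gphi{C}(x)\,dx=2\,\vol(C)\,\perim_B(C)+\alpha\,\vol(C)^2,
\]
whose area part is the classical \(\int_{\real^2}g_C(x)\,dx=\vol(C)^2\) and whose \(\perim_B\) part is a Cauchy/Crofton computation: for a line \(\ell\), the set of \(x\) with \(\ell\cap C\cap(C+x)\neq\emptyset\) equals \((\ell\cap C)+(-C)\), of area \(\vol(C)+\len(\ell\cap C)\,w_C(\nu)\) with \(\nu\perp\ell\); integrating over the lines in a fixed direction (using \(\int\len(\ell\cap C)=\vol(C)\)) gives \(2\vol(C)w_C(\nu)\), and integrating this against the line density that represents \(\perim_B\) yields \(2\vol(C)\perim_B(C)\). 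Integrating \(\gphi{H}=\gphi{K}\) and setting \(p:=\perim_B(H)=\perim_B(K)\) then gives \((\vol H-\vol K)\bigl(2p+\alpha(\vol H+\vol K)\bigr)=0\); the second factor is strictly positive because \(p\) and \(\alpha\) are not both zero, so \(\vol(H)=\vol(K)\).

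By the previous two paragraphs, equality holds in the Brunn--Minkowski inequality, so \(H\) is centrally symmetric; choosing \(v\) with \(H-v=-(H-v)\) and using \(D(H-v)=DH=2K=2(H-v)\) gives \(H-v=K\), i.e. \(H=K+v\), as required. The only step with real content is the integral identity, which is exactly what is needed in the pure-perimeter case \(\alpha=0\) (in particular for the width-covariogram), where no single value of \(\gphi{H}\) can see the area; I also expect a little care in verifying \(\perim_B(C\cap(C+x))>0\) for full-dimensional \(C\cap(C+x)\) when \(B\) is unbounded, e.g. a strip. The conceptual point that makes the whole argument short is that \(DH=DK\) already forces \(\perim_B(H)=\perim_B(K)\), so \(\phi\) can distinguish \(K\) from a non-symmetric \(H\) only through the volume — after which Brunn--Minkowski rigidity finishes.
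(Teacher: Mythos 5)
Your proposal is correct and follows essentially the same route as the paper: identify $DH=DK$ from the support of the $\phi$-covariogram, deduce $\perim_B(H)=\perim_B(K)$ from the linearity of $\perim_B$ (as a mixed area) under Minkowski addition, use the integral identity $\int_{\real^2}\gphi{C}=2\vol(C)\perim_B(C)+\alpha\vol(C)^2$ to force $\vol(H)=\vol(K)$, and finish with the equality case of Brunn--Minkowski. The only cosmetic difference is that you re-derive the integral identity by a Crofton-type computation, whereas the paper obtains it from the convolution representation of $\gphi{K}$ (Theorem~\ref{g phi thm}).
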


Theorem~\ref{retrieval centr sym thm} asserts that the knowledge of $\phi \in \Phi^2 \setminus \{0\}$ and $g_{K,\phi}$ is sufficient for testing whether a given planar convex body $K$ is centrally symmetric or not. Once the symmetry of $K$ has been detected, the determination of $K$ by $g_{K,\phi}$ is trivial, since $2 K$ coincides with the support of $g_{K,\phi}$, up to translations.

We call $\phi \in \Phi^2 \setminus \{0\}$ \emph{strictly monotone} if for all $K, H \in \cK^2$ such that $K$ is a nonempty, proper subset of $H$ the strict inequality $\phi(K) < \phi(H)$ holds. For strictly monotone valuations we show the following.

\begin{theorem}
	\label{retrieval of polygons thm}
	Let $\phi \in \Phi^2 \setminus \{0\}$ be strictly monotone with respect to inclusion and let $P$ be a convex polygon. Then $P$ is determined by the knowledge of $\phi$ and of $g_{P,\phi}$, up to translations and reflections, within the class of all planar convex bodies.
\end{theorem}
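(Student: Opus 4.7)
The plan is to suppose $H \in \cK^2$ satisfies $g_{H,\phi} = g_{P,\phi}$ and to deduce that $H$ is a translate or reflection of $P$. First, strict monotonicity of $\phi$ combined with $\phi(\{p\}) = 0$ forces $\phi(S) > 0$ for every convex set $S$ with more than one point, so the closure of the positivity set of $g_{P,\phi}$ equals the difference body $DP := P + (-P)$. Hence $DH = DP$ is a polygon, from which one concludes that $H$ is itself polygonal (any strictly convex piece of the support function $h_H$ would persist in the symmetric sum $h_{DH}(u) = h_H(u) + h_H(-u)$, since both summands are convex and cannot cancel each other).

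Next I would read off the edges of $P$ from the values of $g_{P,\phi}$ along the edges of $DP$. For an edge $E$ of $DP$ with outer normal $v$, let $\ell^\pm(v)$ denote the lengths of the edges of $P$ with outer normals $\pm v$ (possibly zero); then $|E| = \ell^+(v) + \ell^-(v)$, and for $x \in \relint E$ the set $P \cap (P + x)$ is a segment perpendicular to $v$ whose length, as $x$ sweeps along $E$, is a trapezoidal function with plateau height $\min\{\ell^+(v), \ell^-(v)\}$ and plateau width $|\ell^+(v) - \ell^-(v)|$. Strict monotonicity of $\phi$ makes $L \mapsto \phi([0, L u])$ strictly increasing and therefore invertible on segment lengths, so the unordered pair $\{\ell^+(v), \ell^-(v)\}$ is recovered at every edge direction of $DP$.

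To extract the vertex data, at each vertex $w = v_i - v_j$ of $DP$ I would exploit the fact that for $P$ polygonal and $|t|$ small the tangent-cone description is exact, so
\(
  P \cap (P + w + ty) - v_i = t \bigl(C_i \cap (C_j + y)\bigr),
\)
where $C_i, C_j$ are the tangent cones of $P$ at $v_i, v_j$. Combined with translation invariance of $\phi$ and the scalings $\perim_B(tS) = t \perim_B(S)$ and $\vol(tS) = t^2 \vol(S)$, this yields
\[
 g_{P,\phi}(w + ty) = t\, \perim_B\!\bigl(C_i \cap (C_j + y)\bigr) + t^2 \alpha\, \vol\!\bigl(C_i \cap (C_j + y)\bigr).
\]
The $t$- and $t^2$-coefficients can therefore be read off separately, and strict monotonicity of $\phi$ is exactly the condition that at least one of them is non-trivial (either $\alpha > 0$ or $B$ is bounded). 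In either case the resulting function of $y$ is rich enough to determine the unordered pair $\{C_i, C_j\}$.

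Finally, I would assemble the data. Traversing $\partial DP$ in cyclic order, successive vertices of $DP$ correspond to antipodal pairs $(v_i, v_j)$ that differ by moving exactly one of the two ``hands'' $v_i, v_j$ to its neighbour along $P$, so fixing a side in a single unordered cone-pair propagates a consistent side choice around the whole cycle, pinned down by matching it with the edge-length data on the intervening edges; the only remaining global ambiguity is the simultaneous swap $P \leftrightarrow -P$, which is a reflection. I expect the main obstacle to be the cone-determination step when $\alpha = 0$, so $\phi = \perim_B$ with $B$ bounded: only the piecewise-linear function $y \mapsto \perim_B(C_i \cap (C_j + y))$ is then available, and showing that it pins down $\{C_i, C_j\}$ requires using that $\|\cdot\|_B$ is a genuine norm so as to disentangle the individual edge contributions from the total Minkowski boundary length.
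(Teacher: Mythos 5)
Your first steps are sound and match the paper's Lemma~\ref{curv info lem}: the support of $g_{P,\phi}$ gives $DP$, so $Q$ must be a polygon; the plateau structure of $g_{P,\phi}$ restricted to an edge of $DP$ recovers the unordered pair $\{\ell^+(v),\ell^-(v)\}$; and the conical asymptotics at a vertex $w=v_i-v_j$ of $DP$ recover the unordered cone pair $\{N(P,v_i),-N(P,v_j)\}$. This is exactly the ``curvature information'' step, and your scaling argument is a reasonable substitute for the one in the paper.

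The gap is in your final paragraph. You assert that fixing a side in one cone pair ``propagates a consistent side choice around the whole cycle,'' but this is precisely the nontrivial part, and it is not true that it follows from the local data alone. The paper is explicit about this: the curvature information decomposes $\bd P$ into a finite number of pairs of antipodal arcs, each of which is contained in a translation \emph{or reflection} of $\bd Q$, but ``these translations and reflections a priori may vary from pair to pair.'' In other words, the synisothetic data you extract allows $Q$ to agree locally with $P$ on some arcs and with $-P$ on others, and nothing in a naive caliper traversal of $\bd DP$ rules this out. (Also, your premise that ``exactly one hand moves'' along an edge of $DP$ fails whenever $P$ has two parallel edges, in which case both $F(P,v)$ and $F(P,-v)$ are edges and both hands slide.) The paper closes this gap with a genuinely new ingredient that your proposal never touches: the geometric interpretation of the radial derivative of $\gperim{P}$ (Theorem~\ref{radial deriv of g perim}), used in Lemma~\ref{symmetric arcs lem} to show that any pair of maximal antipodal arcs in $\bd P\cap\bd Q$ must be reflections of each other. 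The paper then concludes indirectly: if $P\neq Q$ up to translation and reflection, one deduces via Lemma~\ref{symmetric arcs lem} that both $P$ and $Q$ are centrally symmetric, whence Theorem~\ref{retrieval centr sym thm} forces $P=Q$, a contradiction. Without some analogue of Lemma~\ref{symmetric arcs lem} — i.e., without using the covariogram values at interior points of $DP$, not merely the boundary asymptotics — your assembly step does not go through.
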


A valuation $\phi\in\Phi^2$ written as in~\eqref{representation_valuation_perim} is strictly monotone with respect to inclusion if and only if either $\alpha>0$ or $\alpha=0$ and   $B$ is strictly convex (see Proposition~\ref{prop on B and per B}).
Thus Theorem~\ref{retrieval of polygons thm} applies also to the perimeter-covariogram corresponding to the standard Euclidean perimeter.

\begin{theorem}
	\label{retrieval for width-covar thm}
	Let $z\in\ucircle$, let $\phi$ be the width with respect to $z$ and let $P$ be a convex polygon. Then $P$ is determined by the knowledge of $\phi$ and of  $g_{P,\phi}$, up to translations and reflections, within the class of all planar convex bodies.
\end{theorem}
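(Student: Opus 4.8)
\emph{Idea of the proof.} After a rotation we may assume $z=e_2$, so that $\phi(K)$ equals the length of the orthogonal projection $\pi(K)$ of $K$ onto the $x_2$-axis; since a rotation transforms the $\phi$-covariogram equivariantly, this is no loss of generality. Write $g:=g_{P,\phi}$, and for a height $s$ let $P^{(s)}:=\{t\in\real:(t,s)\in P\}$; this is a closed segment $[\ell(s),r(s)]$ for $s\in\pi(P)$, with $\ell$ convex and $r$ concave and both piecewise affine. Since $(t,s)\in P\cap(P+x)$ exactly when $s$ and $s-x_2$ lie in $\pi(P)$ and $x_1\in P^{(s)}-P^{(s-x_2)}$, we get
\[
 g(x_1,x_2)=\operatorname{length}\bigl\{\, s:\ s,\ s-x_2\in\pi(P),\ \ell(s)-r(s-x_2)\le x_1\le r(s)-\ell(s-x_2)\,\bigr\}.
\]
The set on the right is the section, parallel to the first coordinate axis, of the body
\[
 \mathcal C:=\bigl\{(s,x_1,x_2)\in\real^3:\ x_1\in P^{(s)}-P^{(s-x_2)}\bigr\},
\]
which is convex because $\ell$ is convex and $r$ is concave, and is a polytope because they are piecewise affine; hence $g$ is concave and piecewise affine on its support and $\cl(\supp g)=P-P$. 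Equivalently one may write $g=U+L$, where $U(x)$ and $L(x)$ are the maximum and the negative of the minimum of the second coordinate on $P\cap(P+x)$; each of $U$ and $L$ is concave and piecewise affine on $P-P$, the extremal point being, on each cell, a fixed vertex of $P$, a fixed vertex of $P+x$, or the meeting point of a fixed edge of $P$ with a fixed edge of $P+x$, so that the cells are governed by the vertices and edges of $P$.

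The first step is to reduce to polygonal competitors and extract the coarse data. For \emph{any} convex body $H$, the set of $x$ for which $H\cap(H+x)$ is full-dimensional is dense in $H-H$ and $g_{H,\phi}$ is positive there, so $\cl(\supp g_{H,\phi})=H-H$; since in the plane a Minkowski sum is a polygon only when both summands are, an arbitrary $H$ with $g_{H,\phi}=g$ satisfies $H-H=\cl(\supp g)=P-P$ and is therefore itself a polygon. Moreover $P-P$ is a centrally symmetric polygon whose edge vectors are those of $P$ together with their negatives; from it one reads off the directions of the edges of $P$ and, in each direction, the combined length of the (at most two) parallel edges of $P$. This already forces $H$ to be a translate or reflection of $P$ when $P$ has no pair of distinct parallel edges --- in particular when $P$ is a triangle --- and, by Theorem~\ref{retrieval centr sym thm}, it also settles the case that $P$ is centrally symmetric.

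The remaining and principal task is to recover the cyclic order of the edges of $P$ and, in each direction carrying two parallel edges, how their combined length splits. For this I would use the fine structure of $g$: the position and direction of its creases (the segments across which $\nabla g$ jumps), the affine functions it carries on the cells, the one-sided derivatives of $g$ at the origin, and the behaviour of $g$ on and near $\bd(P-P)$. A crease of $g$, equivalently of $U$ or of $L$, occurs precisely where the combinatorial type of the extremal point changes, hence is governed by a vertex--edge or edge--edge incidence between $P$ and $P+x$; tracking how these incidences vary as $x$ moves yields the adjacency relations, hence the cyclic order, of the edges of $P$, the one-sided derivatives of $U$ and $L$ at the origin give the tangent cones of $P$ at its topmost and bottommost vertices, and the behaviour of $g$ along and near a given edge of $P-P$ recovers the two parallel edge lengths of $P$ in that direction. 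Assembling all of this reconstructs $P$ up to translation and reflection, which is the claim.

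The hard part is this last step. Since $\phi$ is the width in a single direction it is very far from strictly monotone --- it is unchanged under any shrinking of $K$ in the $e_2^{\perp}$ direction that preserves $\pi(K)$ --- so the argument for strictly monotone valuations used in Theorem~\ref{retrieval of polygons thm} does not carry over, and $g$ gives no direct information about the horizontal extent of the sections $P\cap(P+x)$; all horizontal data must be squeezed out of $P-P$ together with the creases and cell values of $g$. One must therefore prove that this combined information still pins $P$ down, and the argument must be organised so as to handle the degenerate configurations --- polygons with horizontal edges, polygons with several pairs of parallel edges, and coincidences among the vertex--edge incidences --- where creases merge or run along $\bd(P-P)$. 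This case analysis, rather than any single inequality, is the technical core; it is also where one rules out the would-be extra symmetry ``$P\mapsto$ mirror image of $P$ in a horizontal line'', which replaces $g(x_1,x_2)$ by $g(x_1,-x_2)$ and is \emph{not} admissible.
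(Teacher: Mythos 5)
Your setup is sound and your reduction steps are correct: $\cl(\supp g)=DP$ forces any competitor $H$ to be a polygon with $DH=DP$, and $DP$ yields the edge directions of $P$ together with, for each direction, the combined length of the (at most two) parallel edges. But from that point on the proposal is a plan rather than a proof. The entire content of the theorem lies in the step you defer with ``for this I would use the fine structure of $g$\dots'' and ``one must therefore prove that this combined information still pins $P$ down'': recovering, for each direction $u$, which of the two antipodal faces is the edge and which is the vertex (equivalently the set $\{\ci(P,u),\ci(-P,u)\}$), and then showing that these local data glue consistently, i.e.\ that maximal common boundary arcs of $P$ and a competitor $Q$ are reflections of each other. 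Your claim that ``tracking how these incidences vary as $x$ moves yields the adjacency relations, hence the cyclic order'' is precisely what fails to be automatic here: because the width is invariant under arbitrary horizontal shrinking, the crease pattern and cell values of $g$ do \emph{not} transparently encode vertex--edge incidences, and one must construct specific test vectors $x$ and prove strict inequalities between widths of the resulting inscribed triangles. This is where the paper works hardest: it introduces the set $\core P=\{x: g(x)=g(o)-\sprod{x}{z}\}$, reads off from $\core P$ and $DP$ the normal cones at the top and bottom vertices (Lemmas~\ref{lem:width_core}--\ref{lem:curv_info_width}), and then proves a weakened form of the symmetric-arcs lemma (Lemmas~\ref{lem:w_symmetric_arcs} and~\ref{lem:w_symmetric_arcs_conclusion}) using the left radial derivative formula of Theorem~\ref{radial deriv of g perim}, with the strictness of the relevant inequality available only when certain supporting lines lie in $\intr(S_P\cup S_Q)$; the final assembly then has to work around exactly the degeneracies you mention (vertices on $\bd S_P$, horizontal edges). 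None of that machinery, nor any substitute for it, appears in your write-up.

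A secondary inaccuracy: the assertion that the edge data read off from $DP$ ``already forces $H$ to be a translate or reflection of $P$ when $P$ has no pair of distinct parallel edges'' needs justification. Knowing $DP$ gives the edge vectors of $P$ only up to a choice of sign in each direction, and the sign pattern is forced (up to a global flip) only when no proper sub-collection of the edge vectors sums to zero; in general one still needs the covariogram, not just its support, to resolve this. This is exactly why both the paper's proof and any correct proof must extract information from the values of $g$ in the interior of $DP$, not only from $\supp g$.
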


The answer to the volume-covariogram problem is positive for every planar convex body, it is positive for convex polytopes in $\real^3$ (see Bianchi~\cite{Bianchi-2009-polytopes}) but the case of a general convex body in $\real^3$ is still open, and there are examples of nondetermination, as well as positive results in some subclasses of the class of convex bodies,  in every dimension $n\geq4$ (see Goodey, Schneider and Weil~\cite{Goodey-Schneider-Weil-1997}, Bianchi~\cite{MR2108257} and \cite{Bia_FTinCn_13+}).
The proof of the positive answer in the plane is still divided in two papers, with Bianchi~\cite{MR2108257} dealing with convex bodies which are not strictly convex or whose boundary is not everywhere differentiable, and Averkov and Bianchi~\cite{averkov-bianchi-2009} dealing with the remaining more difficult cases. No unifying proof still exists. At the moment it appears out of reach proving a positive answer for the $\phi$-covariogram problem  for general planar convex bodies, and we have decided to study this problem mostly in the class of polygons, where some technical aspects are simpler to handle.
Note that the class of convex polytopes has a remarkable aspect. In all known situations where counterexamples of nondetermination by the covariogram  (as well as by the cross-covariogram \cite{BiCrosscov09}) exist, these examples can also be constructed as convex polytopes.
Furthermore, when $\phi$ is the volume, high smoothness of the boundary of the body seems to depose in favor of determination~\cite{Bia_FTinCn_13+}.

See the beginning of Section~\ref{sect retrieval} for a detailed description of the proofs of Theorems~\ref{retrieval centr sym thm}, \ref{retrieval of polygons thm} and~\ref{retrieval for width-covar thm}. Here we make only a few comments. The structure of the proof of Theorem~\ref{retrieval of polygons thm} is similar to that of the corresponding result for the volume-covariogram problem. One of the tools in this proof is the geometric interpretation of the radial derivative of the perimeter-covariogram proved in Theorem~\ref{radial deriv of g perim}.  We do not know whether the $\phi$-covariogram problem has a positive answer for \emph{every} $\phi\in\Phi^2$, when $K$ is a polygon, and Theorem~\ref{retrieval for width-covar thm} can be seen as a step in investigating this. We remark that the absence of strict monotonicity makes the proof of Theorem~\ref{retrieval for width-covar thm} much more involved compared to the proof of Theorem~\ref{retrieval of polygons thm}.

Section~\ref{subsection nonuniqueness} presents some counterexamples of nondetermination in dimension $n\geq3$. 
The construction leading to counterexamples for the covariogram in dimension $n\geq4$, can be generalized to the $\phi$-covariogram for every $\phi$ which is invariant with respect to the group of isometries of the Euclidean space $\real^n$.
The width-covariogram however presents some novelties which suggest that  it provides less information about the body than $g_K$. It exhibits counterexamples with a structure richer than that of the covariogram.
A consequence of this is that while the volume-covariogram problem has a positive answer for all convex polytopes in $\real^3$ as well as for every centrally symmetric convex body in any dimension, there are examples of centrally symmetric convex polytopes in $\real^n$, for every $n\geq3$, that are not determined by the width-covariogram.
\begin{theorem}\label{counterexamples_width_cov}
Let $z\in \usphere^{n-1}$, let $\phi$ be the width with respect to $z$ and let $n\geq 3$. There exist convex polytopes $K$, $K'$ in $\real^n$ such that $K$ is centrally symmetric, $K'$ is not a translation of $K$ and $\gphi{K}=\gphi{K'}$. 
\end{theorem}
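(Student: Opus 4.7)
The plan is to take both $K$ and $K'$ to be cylinders of height $1$ in the direction $z$, over different bases in $\real^{n-1}$. Choose coordinates so that $z=e_n$ and write points of $\real^n$ as $(x',x_n)$ with $x'\in\real^{n-1}$; then for any nonempty compact convex set $L$, the width $\phi(L)$ equals the length of the orthogonal projection of $L$ onto the $x_n$-axis (with $\phi(\emptyset):=0$). The key observation is that for any convex body $A\subset\real^{n-1}$ and any $x=(x',x_n)\in\real^n$,
\[
(A\times[0,1])\cap\bigl((A\times[0,1])+x\bigr)=(A\cap(A+x'))\times\bigl([0,1]\cap[x_n,1+x_n]\bigr),
\]
so projecting onto the $x_n$-axis and taking the length yields
\[
\gphi{A\times[0,1]}(x',x_n)=\max\{0,1-|x_n|\}\cdot \chf_{A-A}(x').
\]
Thus the width-covariogram of such a cylinder depends on $A$ only through the difference body $A-A$.

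Next I would produce two convex polytopes in $\real^{n-1}$ with the same difference body, one centrally symmetric and one not. Let $A'\subset\real^{n-1}$ be any $(n-1)$-simplex, which is a polytope since $n-1\geq 2$, and set $A_0:=\tfrac{1}{2}(A'-A')$. The set $A'-A'$ is always centrally symmetric, hence so is $A_0$; and since $-(A'-A')=A'-A'$, one has $A_0-A_0=A'-A'$. Moreover, $A_0$ is not a translate of $A'$: the vertices of $A'-A'$ are precisely the differences $v_i-v_j$ with $v_i\neq v_j$ ranging over the $n$ vertices of the simplex $A'$, so $A_0$ has $n(n-1)$ vertices, whereas $A'$ has only $n$, and these counts differ when $n\geq 3$.

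Setting $K:=A_0\times[0,1]$ and $K':=A'\times[0,1]$ then yields the desired pair: $K$ is centrally symmetric as a Cartesian product of centrally symmetric sets; $K'$ cannot be a translation of $K$, since any such translation $(c',c_n)$ would need $c_n=0$ in order to preserve the second factor and would then have to carry $A_0$ onto $A'$, contradicting the different vertex counts; finally, $\gphi{K}=\gphi{K'}$ follows from the factorization computed above together with $A_0-A_0=A'-A'$. No real obstacle stands in the way once one notices that width-covariograms of cylinders in the width direction factor through the difference body of the base, which is a strictly weaker invariant than the base itself.
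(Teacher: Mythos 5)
Your proof is correct. It follows the route the paper sketches in a single sentence just before Theorem~\ref{thm on prismatoids}: use the Cartesian-product factorization of the width-covariogram (the paper's Theorem~\ref{thm:cart_prod_for_wcov}) with $\ell = n-1$, $H'$ a simplex, $H = \tfrac12 DH'$, $m=1$, and a segment as the second factor. Your identity
\[
\gphi{A\times[0,1]}(x',x_n)=\max\{0,1-|x_n|\}\cdot \chf_{A-A}(x')
\]
is exactly the specialization of the paper's formula $\gphi{H \times K}(x,y)= \chf_{DH}(x)\,\wid(K \cap (K+y),z')$ to $K=[0,1]$, and the rest of your argument correctly verifies that $A_0=\tfrac12(A'-A')$ and $A'$ share the same difference body while failing to be translates of each other.

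The paper, however, does not flesh this out; its formal proof of this theorem goes through Theorem~\ref{thm on prismatoids} instead, which constructs counterexamples among $z$-prismatoids $K=\conv((H+L)\cup(H-L))$ with $L$ an arbitrary convex polytope. That construction is strictly more general: it recovers your cylinder example when $L$ is a point (with a different choice of height parameter), but also produces pairs of bodies that are not cylinders at all, which is what the authors mean by counterexamples with ``a different, much richer, structure.'' Your argument is more elementary and self-contained, and perfectly adequate for proving the statement as formulated. One small remark: your vertex-count argument for why $A_0$ is not a translate of the simplex $A'$ implicitly uses that all differences $v_i-v_j$ ($i\ne j$) are vertices of $A'-A'$, which is true for a simplex but worth noting; a shorter alternative is that $A_0$ is centrally symmetric while a positive-dimensional simplex is not, and central symmetry is preserved by translation.
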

Theorem~\ref{retrieval centr sym thm} cannot thus be extended in full generality to dimension $n\geq3$. 

Beside the $\phi$-covariogram problem, we also study the extension to this more general setting of two aspects of the covariogram  which, in our opinion, are among the most important, namely, its connection with stochastic geometry and its representation as a convolution.
The study of which information about a convex body $K$ can be inferred by the distribution of the length of a random chord of $K$ goes back to Blaschke~\cite[Section~4.2]{MR2162874}. When this distribution is provided separated direction by direction (i.e., for each $u\in\usphere^{n-1}$,  the distribution of the length of a random chord parallel to $u$ is given) its knowledge is equivalent to the knowledge of the $\phi$-covariogram of $K$, with $\phi$ depending on the type of randomness. The next result is an example of these connections.
\begin{theorem}
	\label{thm random lambda sect}
	Let \( B\) be an $o$-symmetric closed convex subset of $\real^2$ with $o \in \intr B$ and $B\neq\real^2$. Let \( K \in \cK_0^2\).  Let \( Y \) be a random variable  distributed in \( \bd K \) with density \(\len_B/ \perim_B(K)\) and, for $u\in\ucircle$,  let $L_{\gamma,u}$ denote the length of the chord of $K$ parallel to $u$ and passing through $Y$.
	Then the following holds:
	\begin{enumerate}[(I)]
		\item For every $u \in \ucircle$, the distribution of $L_{\gamma,u}$ is determined by $B$ and $\gperim{K}$.	
		Conversely, the knowledge of $B$ and of the distribution of \( L_{\gamma,u} \) for every $u \in \ucircle$ determines \( \gperim{K} \).
		\item If
			\begin{enumerate}[(a)]
				\item $K$ is centrally symmetric or
				\item \( K \) is a polygon and \( B \) is either strictly convex or a strip,
			\end{enumerate}
			then the knowledge of $B$
			and of the distribution of \( L_{\gamma,u} \) for all directions \( u\in \ucircle \) determines \( K \), up to translation and reflection, in the class of all planar convex bodies.
	\end{enumerate}
\end{theorem}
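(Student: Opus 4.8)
The plan is to deduce part~(II) from part~(I) together with the retrieval theorems, and to prove part~(I) via an explicit slicing identity expressing the survival function of $L_{\gamma,u}$ through the perimeter-covariogram. \emph{The identity.} Fix $u\in\ucircle$ and choose coordinates with $u=e_{1}$; let $\pi$ be the orthogonal projection onto $\real e_{2}$, put $[a,b]=\pi(K)$, and for $t\in(a,b)$ let $\ell(t)$ be the Euclidean length of the chord $K\cap\pi^{-1}(t)$, a concave positive function of $t$. First I would treat the generic case in which $u$ is parallel to no edge of $K$, which excludes only countably many directions: then $\ell(a)=\ell(b)=0$, the boundary $\bd K$ is the union of a ``right'' and a ``left'' arc, each a graph over $[a,b]$, and for $0<s<\max\ell$ the convex body $K\cap(K+su)$ is bounded on the right by the subarc of the right arc over $\{\ell\ge s\}$ and on the left by the translate by $su$ of the corresponding subarc of the left arc, with no further edges. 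Since $\len_B$ is translation invariant this gives
\begin{equation}\label{eq:perimcovslice}
	\gperim{K}(su)=\len_B\bigl(\{\,y\in\bd K:\ \ell(\pi(y))>s\,\}\bigr),
\end{equation}
and because the chord of $K$ through $Y$ parallel to $u$ has Euclidean length $\ell(\pi(Y))$, the right-hand side equals $\perim_B(K)\,\Prob(L_{\gamma,u}>s)$. Using $\gperim{K}(o)=\perim_B(K)$ we obtain, for all but countably many $u$,
\begin{equation}\label{eq:survival}
	\Prob(L_{\gamma,u}>s)=\frac{\gperim{K}(su)}{\gperim{K}(o)},\qquad s\ge 0.
\end{equation}

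\emph{Edge directions and the converse.} When $u$ \emph{is} parallel to an edge of $K$, the body $K\cap(K+su)$ acquires a horizontal edge and~\eqref{eq:survival} picks up the correction $s\|u\|_{B}\bigl(\chf[\ell^{+}_{u}>s]+\chf[\ell^{-}_{u}>s]\bigr)$, where $\|\dotvar\|_{B}$ is the gauge of $B$ and $\ell^{\pm}_{u}$ are the lengths of the two extreme chords of $K$ in direction $u$ (equivalently, of the faces of $K$ with outer normals $\pm u^{\perp}$). I would recover the unordered pair $\{\ell^{+}_{u},\ell^{-}_{u}\}$ from $\gperim{K}$ itself: on the supporting line of $DK:=K+(-K)$ with outer normal $u^{\perp}$ the function $\gperim{K}$ equals $2\|u\|_{B}$ times the trapezoidal ``overlap'' function of two segments of lengths $\ell^{+}_{u}$ and $\ell^{-}_{u}$, whose interior break points lie at $\min$ and $\max$ of $\{\ell^{+}_{u},\ell^{-}_{u}\}$. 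This proves the forward implication of~(I) for every $u$. For the converse, from $B$ and all the distributions, \eqref{eq:survival} recovers $\gperim{K}/\gperim{K}(o)$ along every ray in a dense set of directions; as $\gperim{K}$ is continuous on $\intr DK$ this determines $\gperim{K}/\gperim{K}(o)$ there, hence determines $DK$ as its support. Since $\perim_B$ is additive under Minkowski addition and $B=-B$, one has $\perim_B(DK)=\perim_B(K)+\perim_B(-K)=2\gperim{K}(o)$; as $B$ is known this fixes the missing constant, and the values of $\gperim{K}$ on $\bd DK$ come from the trapezoidal functions above. Thus~(I) holds.

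\emph{From $\gperim{K}$ to $K$.} For part~(II), by~(I) we may assume that $\phi:=\perim_B$ and $\gphi{K}=\gperim{K}$ are known. Here $\perim_B\in\Phi^{2}\setminus\{0\}$: it is real-valued, even, translation invariant and monotone, vanishes on singletons, and is nonzero because $B\neq\real^{2}$. If $K$ is centrally symmetric, Theorem~\ref{retrieval centr sym thm} determines $K$ up to translation. If $K$ is a polygon and $B$ is strictly convex, then $B$ is bounded (an unbounded $o$-symmetric closed convex set contains a line and so is not strictly convex) and by Proposition~\ref{prop on B and per B} $\perim_B$ is strictly monotone, so Theorem~\ref{retrieval of polygons thm} applies. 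If $K$ is a polygon and $B=\{x\in\real^{2}:|\sprod{x}{z}|\le1\}$, then $\gperim{K}(x)$ is twice the width of $K\cap(K+x)$ with respect to $z$, so $\gperim{K}$ is twice the width-covariogram of $K$ and Theorem~\ref{retrieval for width-covar thm} applies. In all three cases $K$ is determined, up to translation and reflection, within all planar convex bodies, which is~(II).

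\emph{Main obstacle.} The hard part is the rigorous justification of~\eqref{eq:perimcovslice} and its correction: the correspondence between boundary points and chords is essentially two-to-one, $\bd K$ need be neither smooth nor strictly convex, and the edge-direction case forces one both to derive the corrected identity and to show that its extra term is still encoded in $\gperim{K}$. A secondary point is checking that $\perim_B$ really meets the hypotheses of Theorems~\ref{retrieval centr sym thm}--\ref{retrieval for width-covar thm} — in particular that strict convexity of $B$ entails boundedness of $B$ and strict monotonicity of $\perim_B$, and that the strip case reproduces exactly the width-covariogram.
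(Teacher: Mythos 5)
Your proposal is correct and follows essentially the same route as the paper: the same survival-function identity $\Prob(L_{\gamma,u}>s)\,\perim_B(K)=\gperim{K}(su)+s\|u\|_B\bigl(\chf[\ell^+_u>s]+\chf[\ell^-_u>s]\bigr)$ (the paper writes it as a three-case formula with $r_1=\min\{\ell^+_u,\ell^-_u\}$, $r_2=\max\{\ell^+_u,\ell^-_u\}$), the same recovery of the pair $\{\ell^+_u,\ell^-_u\}$ from the values of $\gperim{K}$ on the corresponding edge of $DK$, the same converse via a dense set of directions, determination of $DK$, the normalization $\perim_B(K)=\tfrac12\perim_B(DK)$ and continuity, and the same reduction of part (II) to Theorems~\ref{retrieval centr sym thm}, \ref{retrieval of polygons thm} and \ref{retrieval for width-covar thm}. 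The only differences are presentational (slicing by the chord-length function versus the paper's arcs $\arc(\pm ru)$, and reading the full trapezoid on $\bd DK$ versus the paper's appeal to Claim~\ref{opposite lengths claim}).
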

The random variable \( L_{\gamma,u} \) has been introduced by Ehlers and Enns \cite{EnnsEhlers-1981} when $B$ is the Euclidean ball.
See Theorem~\ref{from:rand:var:to:cov} for a similar result for different random variables.

The fact that the covariogram can be written as an autocorrelation, i.e. $g_K=\chf_K\ast \chf_{-K}$, has important consequences on its study. For instance it connects the covariogram to the phase retrieval problem and to some of the above mentioned problems in stochastic geometry.
The $\phi$-covariogram, with $\phi\in\Phi^2$, cannot be written as an autocorrelation but can be written as a convolution, with  formulas involving $\chf_K$ and a suitable measure supported on the boundary of $K$ (see Theorem~\ref{g phi thm}). We remark that it is not clear which $\phi$-covariograms, with $\phi\in\Phi^n$ and $n\geq3$, can be written as convolutions.

Let us give an overview of the structure of the manuscript. In Section~\ref{basic:sect} we collect the necessary background material on convex sets, norms and seminorms, distributions and valuations. In Section~\ref{sect:covariograms_as_convolutions} we study various global properties of \( \gphi{K} \) and represent \( \gphi{K} \) as a convolution. In Section~\ref{sect deriv} we determine a geometric meaning of the radial derivative of \( \gphi{K} \). Section~\ref{sect retrieval} is the longest one and is divided in four subsections. The first three contain respectively the proofs of Theorems~\ref{retrieval centr sym thm}, \ref{retrieval of polygons thm} and \ref{retrieval for width-covar thm}. The fourth one  contains the results regarding nondetermination, including the proof of Theorem~\ref{counterexamples_width_cov}. Section~\ref{sect rand var} is devoted to the connections between the $\phi$-covariogram and stochastic geometry. In Section~\ref{sect:open_problems} we present various open problems and possible directions of further research.

\section{Notations and background material} \label{basic:sect}

\subsection{General notations for \( \real^n \)} 
The origin of \( \real^n \) is denoted by \( o \). By \( \sprod{\dotvar}{\dotvar} \) we denote the standard Euclidean product in \( \real^n \) and by \( \|\dotvar\| \) the corresponding norm.  The unit sphere in \( \real^n \) centered at \( o \) is denoted by \( \usphere^{n-1}. \) For \( u \in \real^n \setminus \{o\} \), by \( l_u \) we denote the line through \( o \) parallel to \( u \) (i.e., the linear span of \( \{u\} \)). For \( a, b \in \real^n \) by \( [a,b] \) we denote the line segment joining \( a \) and \( b \).

When $n=2$,  $\cR$
denotes the linear operation of rotation by 90 degrees around the origin in counterclockwise order. Let $A\subset \real^n$. The boundary, closure and interior of $A$ are abbreviated by \( \bd A\), \( \cl A \) and \( \intr A\), respectively. We denote by $DA$ the set
\[
DA : = \{x-y : x,y\in A\}.                                                                                                                                                                                                                                                                                                                                                                                                                                                                                                                                                                                   \]
We call $DA$ the \emph{difference set} of $A$. By \( \chf_A \) we denote the characteristic function of \( A \), that is, the function equal to \( 1 \) on \( A \) and equal to \( 0 \) on the complement of \( A \).

By \( \vol \) we denote the volume in \( \real^n \), that is, the Lebesgue measure in \( \real^n \).  The integrals of the form \( \int_{\real^n} f(x) \dd x \) for functions \( f : \real^n \rightarrow \real \) are assumed to be defined with respect to the Lebesgue measure in \( \real^n \).

\subsection{Convex geometry} 
	\label{subsect convex geom}
	
 By \( \cK^n \) we denote the set of all compact convex subsets of \( \real^n \) and by \( \cK_0^n \) the set of all convex bodies in \( \real^n \), that is, compact convex subsets of \(  \real^n  \) having nonempty interior.  For background information on convex sets we refer to \cite{Schneider-1993}.
 By \( \conv A\) we denote the \emph{convex hull} of $A$.  For \( K \in\cK_0^n\) the difference set \( DK \) is a convex body, called the \emph{difference body} of \( K \). 

If \( u \in \ucircle \) and \( K \) is a convex set then \( F(K,u) \)
stands for the set of the boundary points
of \( K \) having outer normal \( u \). It is known that 
\begin{equation}\label{faces of diff body}
F(D K,u)=F(K,u)+F(-K,u)=F(K,u)-F(K,-u)
\end{equation}
(see \cite[Theorem 1.7.5(c)]{Schneider-1993}). If \( x \in\bd K \), then \( N(K,x) \),
the \emph{normal cone of \( K \) at \( x \)}, is defined as the set of all outer normal vectors to \( K \) at \( x \) together with \( o \). 

Given  \( K \in\cK_0^2\) and \( a,b\in\bd K \), let \( [a,b]_{\bd K} \) denote the set of points of \( \bd K \) which, in counterclockwise order, follow \( a \) and precede \( b \), together with \( a \) and \( b \). Let \( (a,b)_{\bd K} \) denote \( [a,b]_{\bd K}\setminus\{a,b\}\). We will refer to \( a \) as the \emph{left endpoint} of \( [a,b]_{\bd K} \) and to \( b \) as its \emph{right endpoint}.
Given an arc $\gamma$ on $\bd K$,  $\relint(\gamma)$  denotes $\gamma$ without its endpoints.

With \( K \in \cK^2 \) we also associate the \emph{support function \( h(K,\dotvar) \)} and the \emph{width function \( \wid(K, \dotvar) \)} defined for \( u \in \real^2 \) by
\begin{align*}
	h(K,u) & := \max_{x \in K} \sprod{u}{x},\\
	\wid(K,u) & := \max_{x \in K} \sprod{u}{x} - \min_{x \in K} \sprod{u}{x}.
\end{align*}
If \( K \in \cK_0^2\) and \( u \in \ucircle \), then \( \wid(K,u) \) is the Euclidean distance between the two distinct supporting lines of \( K \) orthogonal to \( u \).

For $K \in \cK_0^2$ and $o \in \intr(K)$ we introduce the \emph{radial function} $\rho(K,\dotvar)$ of $K$ by
\[
	\rho(K,u) := \max \setcond{\alpha \ge 0}{\alpha u \in K}.
\]
Geometrically, if $u \in \ucircle$, then $\rho(K,u)$ is the Euclidean distance from $o$ to the boundary point of $K$ lying on the ray emanating from $o$ and having direction $u$.

The \emph{mixed area} is the functional \( V : \cK^2 \times \cK^2 \rightarrow \real \) uniquely defined by the relation \( \vol(K+H) = \vol(K) + 2 V(K,H) + \vol(H) \) for all \( H,K \in \cK^2 \).

For a subset \( A \) of \( \real^2 \) the \emph{polar set} \( A^\circ \) of \( A \) is defined by
\[
	A^\circ : = \setcond{y \in \real^2}{\sprod{x}{y} \le 1 \ \forall x \in A}.
\]
It is well-known that the operation \( A \mapsto A^\circ \) is an involution on the set of all closed, convex sets that contain the origin.

\subsection{Norms and seminorms in \( \real^2 \), distributions}

\label{subsection seminorms}

We introduce seminorms using convex geometric notions as follows.
Let 
\[
	\cS^2 : = \setcond{B \subset \real^2}{B \text{ closed and convex,} \ B=-B,\ \intr B\neq\emptyset}.
\]
With \( B \in \cS^2 \) we associate the so-called Minkowski functional  \( \|\dotvar\|_B \) given by 
\begin{equation} \label{mink:funct:def}
        \|x\|_B := \inf \setcond{\alpha>0}{x \in \alpha B}.
\end{equation}
The functional \( \| \dotvar \|_B \) is a seminorm. Conversely, every seminorm in \( \real^2 \) can be expressed as \( \| \dotvar \|_B \) with an appropriate choice of \( B \in \cS^2 \). 
If \( \gamma \) is a rectifiable curve in \( \real^2 \), we can define \( \len_B (\gamma) \) to be the \emph{length of \( \gamma \) in the seminorm \( \| \dotvar\|_B \).} 
In analytic terms, \( \len_B (\gamma) \) can be expressed as the Stieltjes integral 
\(
	\len_B (\gamma) = \int_\gamma \| \dd x\|_B.
\)
Equivalently, if \( \gamma(s) \) is a parametrization of $\gamma$ in terms of Euclidean arc length, then  
\(
	\len_B (\gamma) = \int \| (d \gamma(s))/(ds)\|_B \dd s.
\)
We also let \( \len_B(\emptyset) := 0 \).

Using \( \len_B \) we define the \emph{perimeter-functional in the seminorm \( \| \dotvar\|_B \),} that is, the functional \( \perim_B : \cK^2 \rightarrow \real \) given by 
\begin{equation}
		\label{perim def}
        \perim_B (K)
			:= 
        \begin{cases}
                \len_B (\bd K)
					& \text{if} \ \intr K\neq\emptyset, 
				\\
                2 \len_B (K) & 
					\text{otherwise}.
        \end{cases}
\end{equation}
The functional $\perim_B$ is a valuation (see Subsection \ref{subsection valuations}).
In the following simple proposition we relate the geometry of $B$ with properties of $\perim_B$.

\begin{proposition}
	\label{prop on B and per B}
	Let $B \in \cS^2$. Then the following properties hold:
	\begin{enumerate}[(I)]
		\item 
			$\perim_B$ is identically equal to zero if and only if $B = \real^2$;
		\item 
			$B$ is unbounded (that is, $B$ is a strip or $B=\real^2$) if and only if there exist $\beta\geq0$ and $z\in \ucircle$ such that, for each $K \in \cK^2$, $\perim_B (K) = \beta w(K,z)$;
		\item 
			\label{B bounded} 
			$\perim_B$ is strictly positive on each $K \in \cK^2$ which is not a singleton if and only if $B$ is bounded;
		\item 
			$\perim_B$ is strictly monotone if and only if $B$ is strictly convex.
	\end{enumerate}
\end{proposition}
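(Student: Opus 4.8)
The plan is to establish each of the four equivalences separately, relying on the description of $B$ as an $o$-symmetric closed convex set with nonempty interior: such a $B$ is either all of $\real^2$, or a closed strip $\{x : |\sprod{x}{z}|\leq t\}$ for some $z\in\ucircle$ and $t>0$, or a bounded $o$-symmetric convex body (possibly with a vertex, possibly strictly convex). The key analytic tool throughout is the formula for $\|\dotvar\|_B$ and its behaviour: $\|x\|_B$ measures how far one must scale $B$ to capture $x$, so $\|x\|_B=0$ iff $x$ lies in the recession cone of $B$ (the lineality directions), $\|x\|_B$ is positive definite iff $B$ is bounded, and $\len_B$ is additive under concatenation of curves. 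I will also use that for a segment $[a,b]$ one has $\len_B([a,b])=\|b-a\|_B$, and that for a convex body $K$, $\perim_B(K)=\int_{\ucircle} h(B^\circ, \cR u)\, dS(K,u)$ up to normalization (i.e. $\perim_B$ is, up to the $90^\circ$ rotation, the mixed-area functional $2V(K,B^\circ)$); but I can avoid the surface-area-measure representation if I argue more elementarily, and I will try to keep the proof self-contained.

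For (I): if $B=\real^2$ then $\|\dotvar\|_B\equiv 0$, so every curve has zero $B$-length and $\perim_B\equiv 0$; conversely if $B\neq\real^2$, pick a point $x\notin B$ after rescaling, so $\|x\|_B>0$; then for any $K$ which is not a singleton, $\bd K$ (or $K$ itself in the degenerate case) contains a nondegenerate segment or at least two distinct points $a,b$, and $\len_B(\bd K)\geq \|b-a\|_B \cdot c > 0$ for an appropriate direction, giving $\perim_B(K)>0$ — actually this already proves the forward direction of (III) too. For (II): if $B$ is a strip with unit normal $z$ and half-width $t$, then $\|x\|_B=|\sprod{x}{z}|/t$, which is the $B$-length of the projection onto $l_z$; integrating $\|d x\|_B$ around $\bd K$ one gets that the total $B$-length equals $(2/t)\,w(K,z)$ because the projection of $\bd K$ onto $l_z$ covers the width-interval exactly twice, so $\perim_B(K)=\beta w(K,z)$ with $\beta=1/t$ (and $\beta=0$, any $z$, if $B=\real^2$). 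Conversely, if $\perim_B(K)=\beta w(K,z)$ for all $K$, evaluating on segments shows $\|u\|_B=\beta|\sprod{u}{z}|$ for all $u$, which forces $B=\{x : |\sprod{x}{z}|\leq 1/\beta\}$ (a strip) when $\beta>0$, and $B=\real^2$ when $\beta=0$; in all cases $B$ is unbounded. For (III): the forward implication (contrapositive) is contained in the argument for (I); for the reverse, if $B$ is bounded then $\|\dotvar\|_B$ is a genuine norm equivalent to the Euclidean norm, hence $\len_B(\bd K)\geq c\cdot\perim(K)>0$ whenever $K$ is not a singleton.

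For (IV): the substantive equivalence. If $B$ is strictly convex, I want to show $K\subsetneq H$ (with $K$ nonempty) implies $\perim_B(K)<\perim_B(H)$. The standard fact is that for convex bodies $K\subseteq H$ one has $\perim_B(K)\leq\perim_B(H)$ with monotonicity coming from the nearest-point projection $p_K:\real^2\to K$ onto the convex set $K$, which is $1$-Lipschitz in every norm and hence non-increasing on $B$-lengths, so $\perim_B(K)=\len_B(\bd K)=\len_B(p_K(\bd H))\leq\len_B(\bd H)=\perim_B(H)$. To get strictness when $K\subsetneq H$, I would locate a sub-arc of $\bd H$ that the projection $p_K$ maps to a single point or, more robustly, collapses strictly in $B$-length: since $K\neq H$ there is a supporting line $\ell$ of $K$ with $H$ poking strictly past $\ell$, and on the corresponding portion of $\bd H$ the projection $p_K$ sends a nondegenerate arc into $\ell\cap K$; because $B$ is strictly convex, the $B$-length of a curve joining two points $a\neq b$ strictly exceeds $\|b-a\|_B$ unless the curve is the segment $[a,b]$, and one shows the projected arc is genuinely ``shorter'' — this needs a small argument distinguishing the case the arc of $\bd H$ is itself a segment parallel to $\ell$ (then its projection is a strictly shorter parallel segment of the same direction, and strict convexity is not even needed there) from the case it is not, where strict convexity of $B$ kills the extra length. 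Conversely, if $B$ is not strictly convex, its boundary contains a segment $[p,q]$ with $p\neq q$; I then build $K\subsetneq H$ with $\perim_B(K)=\perim_B(H)$ by taking $H$ a small triangle (or parallelogram) with an edge in direction $q-p$ and $K$ obtained by ``denting'' that edge inward with a two-segment polyline whose two directions are convex combinations lying in $[p,q]$ — since $\|\dotvar\|_B$ is linear along the direction-cone spanned by $[p,q]$, the $B$-length is unchanged, so $\perim_B$ fails to be strictly monotone. The main obstacle is the strictness argument in the forward direction of (IV): getting from ``the projection is $1$-Lipschitz'' to a strict loss of $B$-length requires care, both to handle the flat (segment) pieces of $\bd H$ separately and to make the strict convexity of $B$ bite on a set of positive arclength rather than a single point; I would isolate this as the one nonroutine lemma and prove it by a compactness/uniform-strict-convexity estimate on the relevant arc.
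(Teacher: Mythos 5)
The substantive gap is in the forward direction of (IV). Your monotonicity-plus-strictness argument rests on the claim that the Euclidean nearest-point projection $p_K$ onto a convex set is $1$-Lipschitz ``in every norm''. That is false: $p_K$ is nonexpansive only for the Euclidean metric. For instance, with $K$ the $x$-axis one has $p_K(x,y)=(x,0)$, and if $B$ is a sufficiently elongated $o$-symmetric ellipse with long axis in direction $(1,-10)$, then $\|p_K(1,-10)-p_K(o)\|_B=\|(1,0)\|_B>\|(1,-10)\|_B$, so $p_K$ increases $B$-length. Hence neither $\len_B(p_K(\bd H))\le\len_B(\bd H)$ nor your analysis of the projected arc is justified; and the one step you yourself identify as nonroutine (making strict convexity of $B$ produce a strict loss of length) is only promised, not proved, so the core of (IV) is missing. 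A correct short route: non-strict monotonicity of $\perim_B$ follows from $\perim_B(\cdot)=2V(\cdot,\cR(B^\circ))$ (as in Theorem~\ref{repr phi thm}) or from the fact that cutting a convex body by a half-plane replaces a boundary arc by its chord; for strictness, given $K\subsetneq H$ choose a line separating a point of $H$ from $K$, let $H'$ be the part of $H$ on the side of $K$, and note that the arc of $\bd H$ replaced by the chord $[a,b]$ of $\bd H'$ contains a point $c\notin[a,b]$, whence $\len_B(\mathrm{arc})\ge\|c-a\|_B+\|b-c\|_B>\|b-a\|_B$ by the strict triangle inequality for strictly convex $B$; together with $\perim_B(K)\le\perim_B(H')$ this gives $\perim_B(K)<\perim_B(H)$ (lower-dimensional $K,H$ need a separate but easy word). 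Note the paper itself omits proofs of (I)--(III) and for (IV) cites \cite{Martini-Swanepoel-Weiss-2001}, so a self-contained proof is welcome, but it must avoid the false projection lemma.

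There are further, smaller problems. In (I) you overclaim: it is not true that $B\neq\real^2$ forces $\perim_B(K)>0$ for every non-singleton $K$ --- if $B$ is a strip, a segment orthogonal to $z$ (in the direction where $\|\cdot\|_B$ vanishes) has $\perim_B=0$. For (I) you only need one $K$ with positive perimeter (a segment in a direction of positive seminorm), but your stronger claim is exactly what you later invoke for the forward direction of (III), which in fact needs the opposite observation: the zero-perimeter segment inside a strip is the witness that strict positivity fails for unbounded $B$. As written, (III) forward is therefore not established. In (II) the constants are off: for the strip $\{x:|\sprod{x}{z}|\le t\}$ one gets $\beta=2/t$, and conversely evaluating on segments ($\perim_B([o,u])=2\|u\|_B$, $w([o,u],z)=|\sprod{u}{z}|$) gives $B=\{x:|\sprod{x}{z}|\le 2/\beta\}$; harmless but should be fixed. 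Finally, in the reverse direction of (IV) your dented-edge construction is described with the edge parallel to $q-p$, but the additivity $\|u+v\|_B=\|u\|_B+\|v\|_B$ holds when $u,v$ are positive multiples of points of the boundary segment $[p,q]$, so the replaced edge must point into the convex cone spanned by $p$ and $q$, not along $q-p$ (which never lies in that cone). The idea is right; a clean instance is $K=[o,p+q]\subsetneq H=\conv\{o,p,p+q\}$, for which $\perim_B(K)=\perim_B(H)=4$.
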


Assertions (I)--(III) of this proposition can be derived by straightforward methods; we omit the proofs. Regarding assertion \eqref{B bounded},
we observe that when \( B \in \cS^2 \) is bounded, \( \real^2 \) endowed with \( \|\dotvar\|_B \) becomes a two-dimensional normed space, sometimes also called a Minkowski plane. For related information on finite dimensional normed spaces see the survey \cite{Martini-Swanepoel-Weiss-2001} and the monograph \cite{Thompson-1996}. Assertion (IV) is a standard fact from the theory of Minkowski planes; see for example \cite[Proposition~2]{Martini-Swanepoel-Weiss-2001}.

We define the \emph{distribution \( \delta_{\gamma}^B \)} using Stieltjes integration by setting 
\[
        \pairing{\delta_{\gamma}^B}{\tau}
			:=
		\int_\gamma \tau(x) \, \|\dd x\|_B
			\qquad
		\forall \tau \in C^\infty(\real^2),
\]
where, as usual, $C^\infty(\real^2)$ denotes the space of functions on $\real^2$ differentiable infinitely many times. For information on the theory of distributions we refer to \cite{Hoermander-2003} and \cite{Gelfand-Shilov-1964}. By the Riesz representation theorem about positive linear functionals on the space of continuous functions \cite[\S2.2]{Rudin-1966}, the operation \(\tau \mapsto  \pairing{\Bdelta_{\gamma}}{\tau} \) is integration with respect to a nonnegative Borel measure on \( \real^2 \). Thus, we will interpret $\delta_{\gamma}^B$ either as a Borel measure or as a distribution.

When $B$ is the Euclidean ball \(\setcond{x \in \real^2}{|x| \le 1} \) rather than writing \( \len_B \), \( \perim_B \) and \( \delta_{\gamma}^B \) we merely write \( \len \), \( \perim \) and \( \delta_{\gamma} \).

\subsection{Monotone, translation invariant valuations on \( \cK^2 \)} 

\label{subsection valuations}

We shall deal with functionals \( \phi :\cK^2 \rightarrow \real \), which satisfy the following conditions:

\( \phi \) is a \emph{valuation}, i.e., $\phi(\emptyset)=0$ and
\begin{equation} \label{phi:is:val}
	\phi(K \cup H) = \phi(K) + \phi(H) - \phi(K \cap H) \quad \forall K, H \in \cK^2 \ \text{with} \ K \cup H  \in \cK^2;
\end{equation}

\( \phi \) is \emph{translation invariant}, i.e.,
\begin{equation} \label{phi:trans:inv}
	\phi(K+x) = \phi(K) \quad \forall K \in \cK^2 \text{ and } \forall x \in \real^2;
\end{equation}

\( \phi \) is \emph{monotone}, i.e., 
\begin{equation} \label{phi:monotone}
	\phi(K) \le \phi(H) \quad \forall K, H \in \cK^2 \ \text{with} \ K \subset H;
\end{equation}

	\( \phi \) is \emph{even}, i.e., 
\begin{equation} \label{phi:even}
	\phi(K) = \phi(-K) \quad \forall K \in \cK^2.
\end{equation}

There is no loss of generality in assuming that a valuation \( \phi \) on \( \cK^2 \) vanishes on singletons since this additional property can be ensured by replacing \( \phi \) with \( \phi - \phi(\{o\}) \). This change does not influence any of the above properties and it is possible to pass from $\gphi{K}$ to $g_{K,\phi-\phi(\{o\})}$, for each $K\in\cK^2$, via the formula $g_{K,\phi-\phi(\{o\})}=\gphi{K}-\phi(\{o\})$. Thus, we introduce  the family \( \Phi^2 \) as
\[
        \Phi^2 := \setcond{\phi}{\phi \ \text{satisfies \eqref{phi:is:val}--\eqref{phi:even} and} \ \phi(\{o\}) = 0}.
\]

It is well known that $\vol, \perim_B\in\Phi^2$.
Clearly, \( \vol \) is homogeneous of degree two while \( \perim_B \) is homogeneous of degree one, i.e., \( \vol(\lambda K) = |\lambda|^2 \vol(K) \)  and \( \perim_B( \lambda K) = |\lambda| \perim_B(K) \) for every \( \lambda \in \real \) and \( K \in \cK^2 \). 
It turns out that the above examples cover all important valuations belonging to \( \Phi^2 \). 
This is the content of the next theorem.

\begin{theorem} 
	\label{repr phi thm}
	Let \( \phi : \cK^2 \rightarrow \real\). Then the following conditions are equivalent:
	\begin{enumerate}[(i)]
		\item \( \phi \in \Phi^2\);
		\item there exist \( \alpha \ge 0 \) and an \( o \)-symmetric \( H \in \cK^2 \)  such that, for each $K \in \cK^2$,
		\begin{equation}
			\phi(K) = V(K,H) + \alpha \vol(K); \label{phi:as:mixed:area:and:vol}
		\end{equation}
		\item there exist \( \alpha \ge 0 \) and \( B \in \cS^2 \) such that, for each $K \in \cK^2$,
		\begin{equation} \label{phi:as:perim:and:vol}
			\phi(K) = \perim_B(K) + \alpha \vol(K).
		\end{equation}
	\end{enumerate}
	Furthermore, if (i),(ii) and (iii) are fulfilled, then the following statements hold:
	\begin{enumerate}[(I)]
		\item The parameter \( \alpha \ge 0 \) from (ii) and (iii) is uniquely determined by \( \phi \);
		\item The sets \( H \) and \( B \) from (ii) and (iii), respectively, are uniquely determined by \( \phi \) and are related to each other by the equalities
	\begin{align}
		\label{H B relation}
		H & = 2 \cR(B^\circ), 
		&
		B & = 2 \cR(H^\circ).
	\end{align}
	\end{enumerate}
\end{theorem}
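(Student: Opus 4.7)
My plan is as follows. The implications (ii)$\Rightarrow$(i) and (iii)$\Rightarrow$(i) are routine verifications: the mixed area $V(\cdot,H)$ and the volume $\vol$ are translation invariant valuations, monotone when $H\in\cK^2$ and $\alpha\ge 0$, even when $H=-H$, and vanish on singletons; the text has already noted that $\perim_B\in\Phi^2$ for $B\in\cS^2$. For the equivalence (ii)$\Leftrightarrow$(iii) together with the relations \eqref{H B relation}, I would establish the pointwise identity
\begin{equation*}
 \perim_B(K)=2\,V\bigl(K,\cR(B^\circ)\bigr)\qquad \text{for all } K\in\cK^2,\ B\in\cS^2.
\end{equation*}
For $K$ with $\intr K\neq\emptyset$ this follows from \eqref{perim def}, the parametrization of $\bd K$ by Euclidean arclength (whose velocity field is the outer unit normal rotated by $\cR^{-1}$) combined with $\|u\|_B=h(B^\circ,u)$, and the standard representation of mixed area as the integral of $h(\cR(B^\circ),\cdot)$ against the surface area measure of $K$; for lower-dimensional $K$ the identity is checked directly from the second case of \eqref{perim def}. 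Setting $H=2\cR(B^\circ)$ turns \eqref{phi:as:perim:and:vol} into \eqref{phi:as:mixed:area:and:vol}, and the fact that $A\mapsto A^\circ$ is an involution on closed convex sets containing $o$ yields the reverse substitution $B=2\cR(H^\circ)$.

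The substantive step is (i)$\Rightarrow$(ii). Since $\phi\in\Phi^2$ is monotone and translation invariant, it is continuous on $\cK^2$ in the Hausdorff topology (by the classical sandwich argument: given $K_j\to K$, squeeze $K_j$ between a slightly shrunken and a slightly expanded translate of $K$ and apply monotonicity). McMullen's polynomial expansion for continuous translation invariant valuations in the plane then yields a canonical decomposition $\phi=\phi_0+\phi_1+\phi_2$, where $\phi_i$ is a continuous translation invariant valuation homogeneous of degree $i$. The assumption $\phi(\{o\})=0$ gives $\phi_0=0$; evenness of $\phi$ is inherited by each $\phi_i$ because the decomposition is canonical; and monotonicity of $\phi$ transfers to each $\phi_i$ by comparing $\phi(\lambda K)$ with $\phi(\lambda H)$ for $K\subset H$ as $\lambda\to 0^+$ and $\lambda\to\infty$ and extracting powers of $\lambda$. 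Hadwiger's theorem in $\real^2$ identifies $\phi_2$ with $\alpha\vol$ for some scalar $\alpha$, and monotonicity forces $\alpha\ge 0$. The degree-$1$ part $\phi_1$ is handled by the classification of continuous, translation invariant valuations of degree $1$ on $\cK^2$, which represents every such functional as $K\mapsto V(K,H)$ for a convex body $H$ determined up to translation; evenness combined with this uniqueness pins down a unique $o$-symmetric choice of $H$, and monotonicity is equivalent to $H$ being a genuine convex body rather than a formal Minkowski difference. Summing the three pieces produces \eqref{phi:as:mixed:area:and:vol}.

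The main technical obstacle is the classification of degree-$1$ valuations invoked above and the verification that the even, monotone constraints select a genuine $o$-symmetric $H\in\cK^2$; once this is in place, uniqueness is straightforward. The coefficient $\alpha$ is recovered from $\phi$ via $\alpha=\lim_{\lambda\to\infty}\phi(\lambda K)/\vol(\lambda K)$ for any $K$ with $\intr K\neq\emptyset$. Subtracting $\alpha\vol$ leaves the degree-$1$ part $V(\cdot,H)$, from which $H$ is recovered uniquely among $o$-symmetric convex bodies by the Minkowski uniqueness theorem: an $o$-symmetric convex body is determined by its support function, hence by the mixed areas $V(K,H)$ ranging over $K\in\cK^2$. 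Finally $B=2\cR(H^\circ)$ is forced, yielding (I), (II), and the relations \eqref{H B relation}.
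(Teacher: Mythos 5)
Your proof follows essentially the same route as the paper: continuity via McMullen's theorem (your sandwich argument is the standard underlying idea), the homogeneous decomposition into degree-$0$, $1$, $2$ parts, Hadwiger's theorem for the degree-$2$ part, and the McMullen--Firey characterization for the degree-$1$ part, with $\alpha$ and $H$ then recovered as in (I) and (II). The only variation of substance is in establishing (ii)$\Leftrightarrow$(iii): the paper cites the identity $\perim_B(K)=2V(K,\cR(B^\circ))$ from Thompson's monograph for bounded $B$ and verifies the unbounded cases directly, whereas you propose to rederive it from the arclength parametrization of $\bd K$, the polarity relation $\|u\|_B=h(B^\circ,u)$, and the surface-area-measure representation of the mixed area; this is a correct and self-contained alternative to the citation. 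One wording caution in the degree-$1$ step: the classification you want is that of \emph{monotone} continuous translation invariant valuations of degree $1$, which is precisely where McMullen (1990) or Firey (1976) produce a genuine body $H\in\cK^2$. As written, your sentence first asserts that \emph{every} continuous translation invariant degree-$1$ valuation is $V(\cdot,H)$ for a convex body $H$, which is false (the general case only yields a formal difference of support functions), and then you contradict this by adding that monotonicity is what forces $H$ to be a genuine body --- the second clause is the correct statement and the first should be qualified accordingly.
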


This theorem follows rather directly from known results on valuations. Since we have not found any source explicitly containing it, we present a  proof.

\begin{proof}[Proof of Theorem~\ref{repr phi thm}]

	\emph{(i) \( \Rightarrow \) (ii).} Let \( \phi \in \Phi^2\). It is known that every monotone, translation invariant valuation on \( \cK^n \) is continuous (see \cite[Theorem~8]{McMullen-1977}) and that every continuous translation invariant valuation on \( \cK^n \) is a sum of \( n+1 \) continuous, translation invariant valuations which are positively homogeneous of degree $i$, for $i = 0,\ldots,n$ (see  \cite[p.~38]{McMullen-1990} and \cite[Theorem~9]{McMullen-1977}). Thus \( \phi=\phi_1 + \phi_2 \), where \( \phi_1 \) is homogeneous of degree one and \( \phi_2 \) is homogeneous of degree two. It is not hard to see that \( \phi_1 \) and \( \phi_2 \) are determined by \( \phi \) as follows:
	\begin{align}
		\phi_1(K) =& \lim_{\lambda \rightarrow +0} \frac{\phi(\lambda K)}{\lambda}, \label{phi:1:repr}\\
		\phi_2(K) =& \lim_{\lambda \rightarrow +\infty} \frac{\phi(\lambda K)}{\lambda^2}. \label{phi:2:repr}
	\end{align}
	
	Since $\phi \in \Phi^2$, the above expressions for $\phi_1$ and $\phi_2$ imply $\phi_1, \phi_2 \in \Phi^2$.  It is known that every continuous translation invariant valuation on \( \cK^n \), which is homogeneous of degree \( n \) coincides with the volume, up to a constant multiple (see \cite[2.1.3]{MR21:1561}). Thus, \( \phi_2 = \alpha \vol \) for some \( \alpha \in \real \). The value \( \alpha \) is nonnegative since otherwise \( \phi_2 \) would not be monotone in the sense of \eqref{phi:monotone}. Monotone translation invariant valuations on \( \cK^n \) of degree \( 1 \) and \( n-1 \) have been characterized in terms of mixed volumes in \cite[Theorem~1]{McMullen-1990} and \cite{MR0640871}, respectively. Each of these characterizations implies that \( \phi_1(\dotvar) = V(\dotvar,H) \) for some \( H \in \cK^2 \). Using the evenness of $\phi_1$ and standard properties of mixed area we see that, in the representation of $\phi_1$ in terms of $H$, the set $H$ can be replaced by $\frac{1}{2} DH$. Thus, we can assume that $H$ is $o$-symmetric.
	
	\emph{(ii) \( \Rightarrow \) (i)} follows from standard properties of mixed volumes.

	\emph{(ii) $\Leftrightarrow$ (iii)}. 
		It is known and easy to see that the operation $B \mapsto H = \cR(B^\circ)$ is a bijection on the set $\cS^2 \cap \cK_0^2$. From basic properties of the polarity, we also conclude that the above operation is an involution on $\cS^2 \cap \cK_0^2$, meaning $B=\cR(H^\circ)$. Furthermore, we observe that the above operation maps bijectively the set of $o$-symmetric strips $B$ to the set of $o$-symmetric segments $H$, and in the latter (degenerate) situation the inversion formula $H = \cR(B^\circ)$ still remains valid. 

		In view of the above observations, in order to conclude the proof of the equivalence (ii) $\Leftrightarrow$ (iii) it suffices to show  $\perim_B (K) = 2 V(K, \cR(B^\circ))$ for every $K \in \cK^2_0$ and $B \in \cS^2$. In the case $B \in \cS^2 \cap \cK_0^2$ this is known, see \cite[Equalities (4.8) at p.120]{Thompson-1996}.
		When $B$ is $\real^2$ or an an $o$-symmetric strip the equality can be verified in a straightforward manner.

	Assertion (I) holds because $\phi_2$ is determined by $\phi$ via \eqref{phi:2:repr} and $\alpha = \phi_2([0,1]^2)$. For proving (II) we observe that (i) and (ii) imply $V(K,2 \cR(B^\circ)) = V(K,H)$ for every $K \in \cK^2$. It is well-known and not hard to show that a nonempty, $o$-symmetric set $H \in \cK^2$ is determined by the knowledge of $V(K,H)$ for every $K \in \cK^2$ (in fact, it suffices to know $V(K,H)$ for every $o$-symmetric segment $K$). Thus $2 \cR(B^\circ) = H$. 
\end{proof}

\section{
	Representation of  $\phi$-covariograms in terms of convolutions
}
	\label{sect:covariograms_as_convolutions}
  In the following theorem we present a functional-analytic expression for \( \gphi{K} \).

\begin{theorem} 
	\label{g phi thm}
	Let \( \phi \in \Phi^2 \setminus \{0\} \) and \( K \in \cK^2_0 \). Then the following assertions hold:
	\begin{enumerate}[(I)]
		\item 
		Almost everywhere on \( \real^2 \), in the sense of Lebesgue measure, we have
		\begin{equation}\label{phicov as convolution}\begin{split}
			\gphi{K}
				& = 
				1_K \ast \Bdelta_{- \bd K} + \Bdelta_{\bd K} \ast \chf_{-K} + \alpha \chf_K \ast \chf_{-K} 
				\\
				& = 
				\left( \Bdelta_{-\bd K} + \frac{\alpha}{2} \chf_{-K} \right) \ast \chf_K + \left( \Bdelta_{\bd K} + \frac{\alpha}{2} \chf_K \right)  \ast \chf_{-K}.
        \end{split}\end{equation}
        \item
		\label{integral of g phi}
			\(
				\int_{\real^2} \gphi{K}(x) \dd x = \vol(K) (2 \perim_B(K) + \alpha \vol(K)).
			\)
		\item \label{g phi thm_a}
		\( \supp \gphi{K} = DK \).
		\item $\gperim{K}$ and $\sqrt{g_K}$ are concave on $DK$.
	\end{enumerate}
\end{theorem}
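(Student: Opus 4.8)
The plan is to derive both assertions from a single Minkowski-type inclusion. Fix $x,y\in DK$ and $\lambda\in[0,1]$ and put $z=\lambda x+(1-\lambda)y$. Since $DK$ is convex we have $z\in DK$, so the three sets $K\cap(K+x)$, $K\cap(K+y)$, $K\cap(K+z)$ are all nonempty, and I would first check that
\[
 \lambda\bigl(K\cap(K+x)\bigr)+(1-\lambda)\bigl(K\cap(K+y)\bigr)\ \subseteq\ K\cap(K+z).
\]
This is immediate: for $a\in K\cap(K+x)$ and $b\in K\cap(K+y)$, convexity of $K$ gives $\lambda a+(1-\lambda)b\in K$ and also $\lambda(a-x)+(1-\lambda)(b-y)\in K$, and the second point equals $\lambda a+(1-\lambda)b-z$, hence $\lambda a+(1-\lambda)b\in K+z$.

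For $\sqrt{g_K}$ I would take $\vol^{1/2}$ of both sides of the inclusion and apply the planar Brunn--Minkowski inequality (i.e.\ $\vol(C+D)^{1/2}\ge\vol(C)^{1/2}+\vol(D)^{1/2}$ for nonempty $C,D\in\cK^2$) together with the homogeneity $\vol(\lambda C)^{1/2}=\lambda\,\vol(C)^{1/2}$, obtaining $\sqrt{g_K(z)}\ge\lambda\sqrt{g_K(x)}+(1-\lambda)\sqrt{g_K(y)}$, which is the asserted concavity on $DK$. If one of $K\cap(K+x)$, $K\cap(K+y)$ happens to be lower-dimensional the inequality only gets easier by monotonicity of $\vol$, so no case distinction is really needed.

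For $\gperim{K}$ I would instead apply $\perim_B$ to the inclusion, using first its monotonicity and then its Minkowski linearity on $\cK^2$:
\[
 \gperim{K}(z)=\perim_B\bigl(K\cap(K+z)\bigr)\ \ge\ \perim_B\!\left(\lambda\bigl(K\cap(K+x)\bigr)+(1-\lambda)\bigl(K\cap(K+y)\bigr)\right)=\lambda\,\gperim{K}(x)+(1-\lambda)\,\gperim{K}(y).
\]
The main point that needs care — and essentially the only one — is the identity $\perim_B(\lambda C+\mu D)=\lambda\perim_B(C)+\mu\perim_B(D)$ for all $C,D\in\cK^2$ and all $\lambda,\mu\ge 0$, including the degenerate cases in which $C$ or $D$ collapses to a segment or a point (which does occur when $x\in\bd DK$). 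I would deduce this from the representation $\perim_B=2V(\dotvar,\cR(B^\circ))$ valid on all of $\cK^2$ (as in the proof of Theorem~\ref{repr phi thm}, with the degenerate case covered by $2V([a,b],\cR(B^\circ))=2\|b-a\|_B=\perim_B([a,b])$), since mixed area is additive and positively homogeneous of degree one in its first argument; alternatively one argues geometrically that $\bd(\lambda C+\mu D)$ decomposes into translated dilates of the boundary arcs of $C$ and of $D$. With this identity the displayed chain is complete. Since the whole argument stays inside $DK$, concavity is obtained on all of $DK$ with no separate continuity or limiting step.
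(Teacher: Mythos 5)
Your argument establishes only assertion (IV) of the theorem; assertions (I)--(III), which constitute the main content of the statement and the bulk of the paper's proof, are not addressed at all. In the paper, (I) is proved (after the reduction via \eqref{decomposition_of_phi-cov} to the cases $\phi=\vol$ and $\phi=\perim_B$) by first checking the a.e.\ identity $\gperim{K}(x)=\len_B\bigl(K\cap(\bd K+x)\bigr)+\len_B\bigl(K\cap(\bd K-x)\bigr)$ — valid off the null set $\bigcup_{u\in\ucircle}\bigl(F(K,u)-F(K,u)\bigr)$ — and then identifying $\len_B\bigl(K\cap(\bd K+x)\bigr)$ with $(\chf_K\ast\Bdelta_{-\bd K})(x)$ by a Fubini argument for the Borel measure $\Bdelta_{\bd K}$; assertion (II) follows by integrating (I), and (III) follows from $\intr\bigl(K\cap(K+x)\bigr)\ne\emptyset$ for $x\in\intr DK$ together with Proposition~\ref{prop on B and per B}. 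None of this is present in your proposal, so as a proof of the stated theorem it has a genuine gap: three of the four assertions are simply missing.

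For the part you do treat, the argument is correct and is essentially the paper's: the inclusion $\lambda\bigl(K\cap(K+x)\bigr)+(1-\lambda)\bigl(K\cap(K+y)\bigr)\subseteq K\cap\bigl(K+\lambda x+(1-\lambda)y\bigr)$ is exactly \eqref{incl self-intersector}; the concavity of $\sqrt{g_K}$ via the planar Brunn--Minkowski inequality is the classical argument the paper cites from \cite[Proof of Theorem~7.3.1]{Schneider-1993}; and the concavity of $\gperim{K}$ via monotonicity plus Minkowski-linearity of $\perim_B$, justified through the representation $\perim_B=2V(\dotvar,\cR(B^\circ))$ of Theorem~\ref{repr phi thm}, is precisely the paper's argument. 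Your explicit attention to the degenerate (segment) case is a reasonable refinement, but it does not compensate for the missing assertions (I)--(III).
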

\begin{proof}
	In view of \eqref{decomposition_of_phi-cov}, the assertion for a general $\phi \in \Phi^2$ follows by proving the assertion when $\phi =\perim_B$, with $B \in \cS^2$, and when  $\phi$ is the volume. When $\phi=\vol$, assertions (I)--(IV) are known. In this particular case (I) and (II) can be found in \cite[p.85, (4.3.1) and (4.3.2)]{MR0385969}, (III) is trivial and well known, while the proof of the concavity of $\sqrt{g_K}$ in the assertion (IV) can be found in \cite[Proof of Theorem~7.3.1]{Schneider-1993}.
	Consider the case $\phi=\perim_B$. 
		
	For showing (I) it suffices to verify that almost everywhere, in the sense of Lebesgue measure on \( \real^2 \), we have
	\begin{gather}
		\gperim{K}(x)
				 =
		\len_B(K \cap (\bd K + x)) + \len_B(K \cap (\bd K -x)),
				\label{g perim via chords}
		\\
		\intertext{and}
		\len_B(K \cap (\bd K + x)) 
				 = 
		(\chf_K \ast \delta_{-\bd K}^B)(x), 
				\label{chord len as convol} 
	\end{gather}
	Equality \eqref{g perim via chords} obviously holds for \( x \in \real^2 \setminus DK \), since in this case $K \cap (K + x)=\emptyset$  and both the left and the right hand side are zero. Let
	\[
		A := \intr (DK) \setminus \bigcup_{u \in \ucircle} \bigl(F(K,u) - F(K,u) \bigr).
	\]
	There are at most countably many directions $u \in \ucircle$ for which $F(K,u)$ is one-dimensional. For those directions $F(K,u)-F(K,u)$ is one-dimensional as well. For all the remaining directions $u$, one has $F(K,u) =F(K,u)-F(K,u)= \{o\}$. Thus, the union for $u \in \ucircle$ in the definition of $A$ has volume zero and, as a consequence,  \( \vol(A)=\vol(DK) \). Observe that, for every $x \in A$, $\bd K\cap (\bd K+x)$ consists of two points, the convex body $K$ has precisely two chords which are translates of $[o,x]$. and, moreover, the relative interior of both these chords is contained in $\intr K$. The latter implies that \eqref{g perim via chords} holds for every \( x \in A \). Hence \eqref{g perim via chords} holds almost everywhere.

	Let us show \eqref{chord len as convol}. Consider an arbitrary \( \tau \in C^\infty(\real^2) \). Using the definition of convolution of distributions (see \cite[Chapter~I, \S5]{Gelfand-Shilov-1964}) and performing changes of variable of integration, we obtain
	\begin{align}
			\nonumber
		\pairing{ \chf_K \ast \Bdelta_{- \bd K} }{ \tau }
			= 
		\int_{-\bd K} &  \left\{ \int_{\real^2} \chf_K(x) \tau(x+y) \dd x \right\} \| \dd y\|_B 
			\\ 
			\nonumber
			= 
		\int_{\phantom{-}\bd K} & \left\{ \int_{\real^2} \chf_K(x) \tau(x-y) \dd x \right\} \|\dd y\|_B 
			\\
			\label{aux integral}
			= 
		\int_{\phantom{-}\bd K} & \left\{ \int_{\real^2} \chf_K(x+y) \tau(x) \dd x \right\} \|\dd y\|_B.
	\end{align}
	We recall that the Stieltjes integration on $\bd K$ can be expressed as integration with respect to a Borel measure, which we denote by $\delta_{\bd K}^B$. Thus, $\vol \times \delta_{\bd K}^B$ is a product of two Borel measures and, by this, again a Borel measure. The function $\chf_K(x+y) \tau(x)$ on $\real^2 \times \real^2$ is clearly Borel measurable and, moreover, summable with respect to $\vol \times \delta_{\bd K}^B$. By Fubini's theorem  \cite[Theorem~8.8]{Rudin-1966} we can exchange the order of integration in \eqref{aux integral} arriving at
	\begin{align*}
		\pairing{ \chf_K \ast \Bdelta_{-\bd K} }{ \tau }
			& = 
		\int_{\real^2} \left\{ \int_{\bd K} \chf_K(x+y) \|\dd y\|_B \right\} \tau(x) \dd x 
			\\
			& = 
		\int_{\real^2} \len_B(\bd K \cap (K-x)) \tau(x) \dd x 
			\\
			& = 
		\int_{\real^2} \len_B(K \cap (\bd K+x)) \tau(x) \dd x.
	\end{align*}
	Hence we get \eqref{chord len as convol}. This concludes the proof of (I).

	Assertion (II) is a direct consequence of (I). Assertion (III) follows from the fact that \( \intr(K \cap (K+x))\neq\emptyset \) for every \( x \in \intr DK \). This implies, by Proposition~\ref{prop on B and per B}, that $\gperim{K}(x)$ is positive for every $x \in \intr DK$. 

	It remains to verify (IV). 
	Consider \( x,y \in DK \) and \( 0 \le \lambda \le 1 \). The inclusion
	\begin{equation}
		\label{incl self-intersector}
		(1-\lambda) (K \cap (K+x)) + \lambda (K \cap (K+y)) 
			\subset
		K \cap (K + (1-\lambda) x+ \lambda y)
	\end{equation}
	can be verified in a straightforward manner. Representing $\perim_B$ in terms of mixed areas according to Theorem~\ref{repr phi thm} and using the monotonicity and the linearity of the mixed areas (in any of the two arguments) we get
	\( 
		\gperim{K}((1-\lambda) x + \lambda y) 
			\ge
		(1-\lambda) \gperim{K}(x) + \lambda \gperim{K}(y).
	\)
\end{proof}

\section{Radial derivatives of \( \phi \)-covariograms} 

\label{sect deriv}

One of the tools in the proofs of the retrieval results will be the formulas which provide a geometric interpretation of the radial derivatives of \( \gperim{K} \) and \( g_{K} \). We introduce some notations illustrated by Fig.~\ref{inscribed P fig}. Fix $K \in \cK_0^2$ and $x \in \intr (DK) \setminus \{o\}$. We introduce a number of objects that depend on the pair $(K,x)$ but for the sake of brevity we mostly only indicate the dependence on $x$. Let \( \inspar(x) \) be a parallelogram inscribed in \( K \) (which means, that all vertices of \( \inspar(x) \) belong to \( \bd K \)) and such that two opposite edges of \( \inspar(x) \) are translates of the segment \( [o,x] \).  The parallelogram $\inspar(x)$ is determined uniquely unless \( K \) has a one-dimensional face parallel to \( x \) and strictly longer than \( [o,x] \). In the case of non-uniqueness we just fix any \( \inspar(x) \) satisfying the above conditions. Furthermore, for every \( x \in \intr DK \setminus \{o\} \)  we choose \( \inspar(x) \) and \( \inspar(-x) \) to be equal. Let \( p_1(x),\ldots, p_4(x) \) be the vertices of \( \inspar(x) \) in counterclockwise order on \( \bd K \) and such that \( x=p_1(x)-p_2(x)=p_4(x)-p_3(x) \).

\begin{FigTab}{c}{1.4mm}
       \begin{picture}(54,50)
		\put(-1,-0.7){
			\includegraphics[width=50\unitlength]{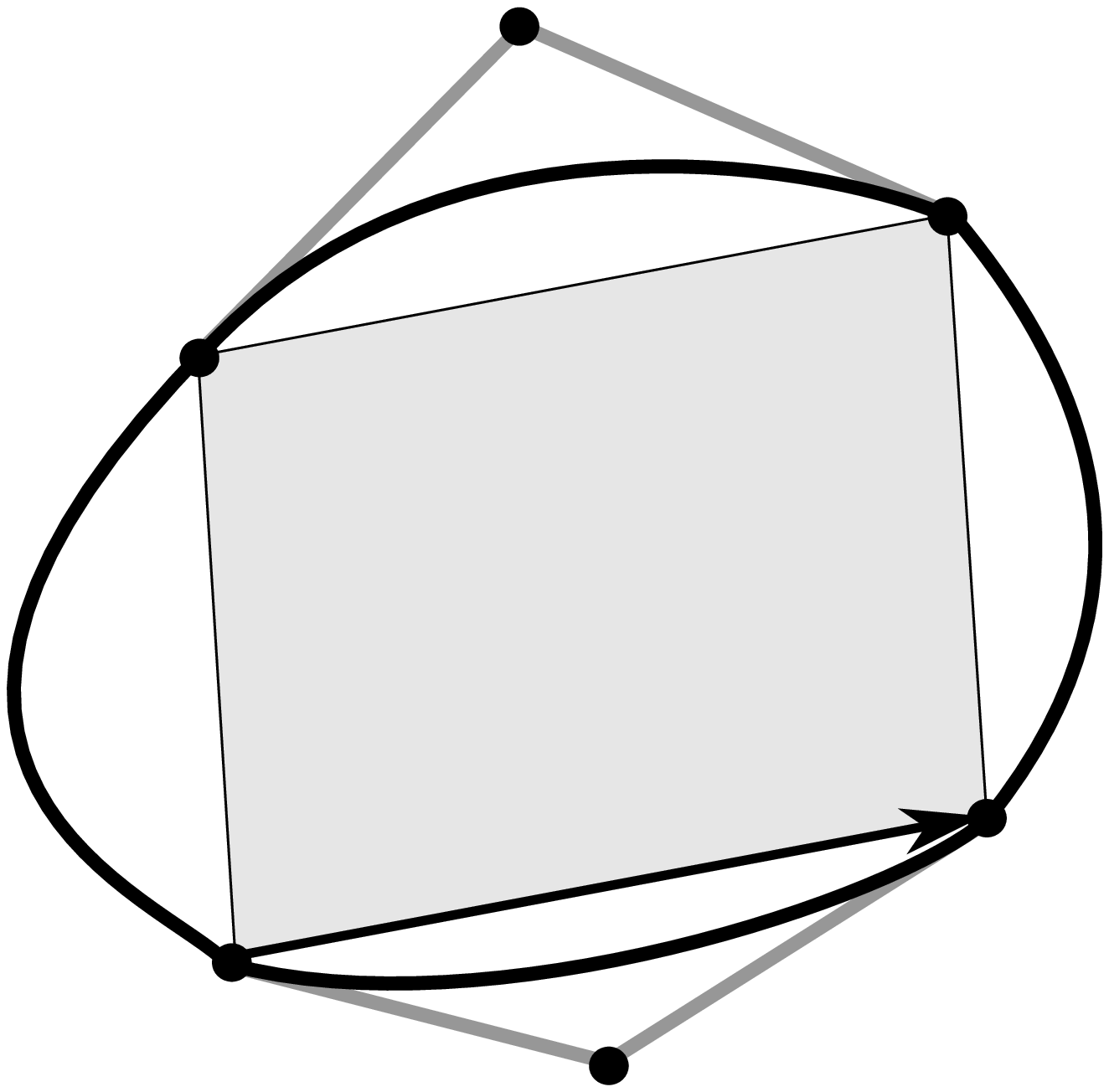}
		}
		\put(44,40){\small \( p_1(x) \)}
		\put(3.5,34.5){\small \( p_2(x) \)}
		\put(7,2){\small \( p_3(x) \)}
		\put(43,8){\small \( p_4(x) \)}
		\put(20,42){\scriptsize \( \arc(x) \)}
		\put(22,9){\small \( x \)}
		\put(15,47){\small \( p_{1,2}(x) \)}
		\put(24,-3){\small \( p_{3,4}(x) \)}
		\put(30,45){\scriptsize \( \thecap(x) \)}
       \end{picture}
   \\
   \parbox[t]{0.9\textwidth}{\caption{\label{inscribed P fig}} Data associated to \( K \) and \( x \in \intr DK \setminus \{o\} \): the points \( p_1(x),\ldots,p_4(x) \), \( p_{1,2}(x), p_{3,4}(x) \), the parallelogram \( \inspar(x) \) inscribed in \( K \) (shaded) and the boundary arc \( \arc(x) \) joining \( p_1(x) \) and \( p_2(x) \)}
   \end{FigTab}

It is known \cite{Matheron8601} that for $u \in \ucircle$ and $0 < s < \rho(DK,u)$, the value $- \pderiv{s} g_K(su)$ is the Euclidean distance between the lines $\aff \{p_1(su),p_2(s u)\}$ and $\aff \{p_3(s u), p_4 (s u)\}$. This  can be expressed in the following equivalent way.

\begin{theorem}
	\label{radial deriv of g}
	\thmheader{On radial derivative of the standard covariogram \cite{Matheron8601}.}
	Let \( K \in \cK^2_0 \) and let \( x \in \intr DK \setminus \{o\} \). Then
	\begin{equation}
		- \PartialDerivAt{ g_{K}(t x) }{t}{1} 
			= 
		\vol\bigl(\inspar(K,x)\bigr).
			\label{rad deriv of g}
	\end{equation}
\end{theorem}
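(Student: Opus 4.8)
The plan is to obtain \eqref{rad deriv of g} as a reformulation of the theorem of Matheron recalled just above, combining it with the chain rule and an elementary area computation. First I would reduce to the case $x=e_2$, the second standard basis vector: both sides of \eqref{rad deriv of g} are unchanged under translations of $K$, and if $\Lambda$ is a linear map with $|\det\Lambda|=1$, then $g_{\Lambda K}(t\Lambda x)=g_K(tx)$ (since $\Lambda$ carries $K\cap(K+tx)$ onto $\Lambda K\cap(\Lambda K+t\Lambda x)$) while $\inspar(\Lambda K,\Lambda x)=\Lambda\,\inspar(K,x)$, so the two parallelograms have equal area; choosing $\Lambda$ suitably we may assume $x=e_2$, hence $|x|=1$. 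Next, the two edges of $\inspar(x)$ that are translates of $[o,x]$, namely $[p_2(x),p_1(x)]$ and $[p_3(x),p_4(x)]$, lie respectively on the parallel lines $\aff\{p_1(x),p_2(x)\}$ and $\aff\{p_3(x),p_4(x)\}$ and have length $|x|$; computing the area of a parallelogram as base times height gives
\[
\vol\bigl(\inspar(K,x)\bigr)=|x|\cdot h,
\]
where $h$ is the Euclidean distance between those two lines. Writing $u=x/|x|$ and $s=t|x|$, the chain rule yields $\pderiv{t}g_K(tx)=|x|\,\pderiv{s}g_K(su)$; evaluating at $t=1$, i.e.\ $s=|x|$, and inserting Matheron's identity $-\pderiv{s}g_K(su)=h$ at this value of $s$ gives \eqref{rad deriv of g}. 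Note that the hypothesis $x\in\intr DK\setminus\{o\}$ is exactly the condition $0<|x|<\rho(DK,u)$, which is the range in which Matheron's formula — and in particular the very existence of the derivative — is valid.

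Alternatively, I would give a self-contained proof by Cavalieri's principle. After the normalization $x=e_2$, slice $\real^2$ by the vertical lines $\{x_1=\xi\}$: the slice $K\cap\{x_1=\xi\}$ is a segment whose length $w(\xi)$ is a nonnegative concave function of $\xi$ on the projection of $K$ onto the first axis, with maximum $\rho(DK,e_2)$ (the length of a longest vertical chord of $K$). The slice of $K\cap(K+te_2)$ over $\xi$ is a segment of length $\max\{0,\,w(\xi)-t\}$, so
\[
g_K(te_2)=\int_{\real}\bigl(w(\xi)-t\bigr)_+\,\dd\xi.
\]
Differentiating under the integral sign — legitimate at $t=1$ because $\{\xi:w(\xi)=1\}$ is finite, a concave $w$ with $\max w>1$ being unable to equal $1$ on a nondegenerate interval — gives $-\PartialDerivAt{g_K(te_2)}{t}{1}=\bigl|\{\xi:w(\xi)>1\}\bigr|$. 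Since $w$ is concave with maximum $>1$, the set $\{w>1\}$ is a bounded interval with endpoints $\xi_1<\xi_3$ satisfying $w(\xi_1)=w(\xi_3)=1$; on the other hand $\inspar(e_2)$ has two vertical edges of length $1$ with endpoints on $\bd K$, and such a vertical chord must sit over one of these two cross-sections, so the edges of $\inspar(e_2)$ lie on $\{x_1=\xi_1\}$ and $\{x_1=\xi_3\}$ and its area equals $1\cdot(\xi_3-\xi_1)=|\{w>1\}|$. (This also recovers Matheron's geometric description, since after the normalization $h=\xi_3-\xi_1$.)

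The step I expect to be the main obstacle is the degenerate case in which $\inspar(x)$ is not unique, i.e.\ when $K$ has a face $F$ parallel to $x$ with $\length F>|x|$. In the normalized picture such an $F$ lies over an endpoint of the projection of $K$, so $\{w>1\}$ is a half-open or closed interval and a vertical edge of $\inspar(e_2)$ may slide along $F$; one must then check that every admissible choice of $\inspar(x)$ still has the same area. This follows by noting that a vertical segment of length $1$ with both endpoints on $\bd K$ lies either over an interior cross-section of width exactly $1$ or over an endpoint $\xi$ of the projection of $K$ with $w(\xi)\ge1$, and the set of such $\xi$ is precisely the closed interval $\{w\ge1\}$, whose length equals $|\{w>1\}|$ (the two sets differ by at most two points). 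Hence the two vertical edges of any admissible $\inspar(e_2)$ are over the endpoints of $\{w\ge1\}$, and its area is again $|\{w>1\}|$. The same absence of a level‑$1$ flat spot of $w$ is what upgrades one-sided differentiability — all one can expect when $x\in\bd DK$ — to honest differentiability of $t\mapsto g_K(tx)$ at $t=1$ under the hypothesis $x\in\intr DK$.
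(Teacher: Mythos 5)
Your first argument is exactly the paper's: the paper offers no independent proof of Theorem~\ref{radial deriv of g}, presenting it as an equivalent restatement of the cited Matheron identity that \( -\pderiv{s} g_K(su) \) equals the Euclidean distance between \( \aff\{p_1(su),p_2(su)\} \) and \( \aff\{p_3(su),p_4(su)\} \) for \( 0<s<\rho(DK,u) \), and your chain-rule plus base-times-height computation is precisely the content of that equivalence. Your Cavalieri argument is a genuine addition: it is self-contained, it proves Matheron's identity rather than quoting it, and it makes explicit two points the paper leaves implicit, namely why the derivative is two-sided for \( x\in\intr DK \) (the level set \( \{w=1\} \) of the concave slice function is finite) and why \( \vol(\inspar(x)) \) does not depend on the choice made in the degenerate case. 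One sentence in that last discussion is off as literally stated: the set of abscissae \( \xi \) carrying a vertical unit segment with both endpoints on \( \bd K \) is not the whole closed interval \( \{w\ge 1\} \) — over an interior \( \xi \) with \( w(\xi)>1 \) the only vertical segment with both endpoints on \( \bd K \) is the full chord, of length \( w(\xi)>1 \) — but rather is contained in the two-point set of endpoints of \( \{w\ge 1\} \), which is exactly what your conclusion needs and what the first half of that sentence correctly establishes. This is a slip of phrasing, not a gap.
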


We observe that, in contrast to $\frac{\partial}{\partial t}g_{K} (t x)$, the derivative
\(
	\frac{\partial}{\partial t}\gperim{K} (t x)
\)
does not always exist in the classical sense. Nevertheless, both the left and the right derivatives do exist, as a consequence of  the concavity of \( \gperim{K} \) on $DK$. Theorem~\ref{radial deriv of g perim} below presents a  geometric interpretation of the left derivative.

Given  $K\in\cK^2_0$ and $p\in\bd K$  we denote by \emph{left tangent} (and by \emph{right tangent}) of $K$ at $p$ the line tangent at $p$ to the portion of $\bd K$ which precedes $p$ (which follows $p$, respectively).

Let $x\in\intr DK\setminus\{o\}$, \( l_1(x) \) be the right tangent of \( K \) at \( p_1(x) \) and \( l_2(x) \) be the left tangent of \( K \) at \( p_2(x) \). Define
\[
	\arc(x)
		:=
	\bigl[p_1(x),p_2(x) \bigr]_{\bd K}.
\]
Assume  \( \arc(x) \ne [p_1(x),p_2(x)] \). In this case  \( l_1(x) \) and \( l_2(x) \) are not parallel to \( [p_1(x),p_2(x)] \).  These lines are also not parallel to each other, because this may happen only if they are lines supporting $K$ on opposite sides and this cannot be due to the assumption $x\in\intr DK$. We denote by \( p_{1,2}(x) \) the intersection point of \( l_1(x) \) and \( l_2(x) \). When \( \arc(x) = [p_1(x),p_2(x)] \), then both \( l_1(x) \) and \( l_2(x) \) are parallel to \( [p_1(x),p_2(x)] \) and we denote by \( p_{1,2}(x) \) any point on \( [p_1(x),p_2(x)] \). We introduce the polygonal line
\[
	\thecap(x)
		:=
	\bigl[p_1(x),p_{1,2}(x)\bigr] \cup \bigl[p_{1,2}(x),p_2(x)\bigr].
\]

Similarly, let \( l_3(x) \) be the right tangent of \( K \) at \( p_3(x) \) and \( l_4(x) \) be the left tangent of \( K \) at \( p_4(x) \). If \( [p_3(x),p_4(x)]_{\bd K} \ne [p_3(x),p_4(x)] \), then we denote by \( p_{3,4}(x) \) the intersection point of \( l_3(x) \) and \( l_4(x) \), otherwise \( p_{3,4}(x) \) is chosen to be any point on \( [p_3(x),p_4(x)] \). Clearly, one has
\[
	\thecap(-x)
		= 
	\bigl[p_3(x),p_{3,4}(x)\bigr] \cup \bigl[p_{3,4}(x),p_4(x)\bigr].
\]

\begin{theorem}
	\thmheader{On radial derivatives of the perimeter-covariogram.}
	\label{radial deriv of g perim}
	Let \( K \in \cK^2_0 \) and \( x \in \intr DK \). Then
	\begin{equation} \label{rad:deriv:perim:cov:repr}
		- \left. \pderivleft{t} \gperim{K}(tx) \right|_{t=1}
			= 
		\len_B \left(\thecap(K,x)\right) + \len_B \left(\thecap(K,-x)\right).
	\end{equation}
\end{theorem}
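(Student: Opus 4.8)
The plan is to write $\gperim{K}(tx)=\perim_B\bigl(K\cap(K+tx)\bigr)=\len_B\bigl(\bd(K\cap(K+tx))\bigr)$ for $t$ near $1$ and to reduce the left radial derivative to the computation of the one-sided derivatives of the vertices $p_1(tx),\dots,p_4(tx)$ of the inscribed parallelograms. We may assume $B\ne\real^2$ (otherwise $\perim_B\equiv0$ and both sides vanish) and, since $\thecap(K,\cdot)$ is defined only for nonzero arguments, $x\ne o$. Put $K_t:=K\cap(K+tx)$. If $y\in K$ and $y-x\in K$, then $y-tx=(1-t)y+t(y-x)\in K$ for $0\le t\le1$, so $K_1\subseteq K_t$ for $t\le1$. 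Along a sequence $t_k\uparrow1$ for which $\bd K$ and $\bd(K+t_kx)$ meet in exactly the two points $p_1(t_kx),p_4(t_kx)$ --- which, apart from the degenerate situation treated below, excludes only finitely many values of $t$ --- the curve $\bd K_t$ is the arc $[p_4(tx),p_1(tx)]_{\bd K}$ of $\bd K$ together with the $tx$-translate of $[p_2(tx),p_3(tx)]_{\bd K}$, and analogously for $\bd K_1$. The inclusion $K_1\subseteq K_t$ forces $p_i(tx)\to p_i(x)$ as $t\uparrow1$, the points $p_1(tx),p_3(tx)$ approaching $p_1(x),p_3(x)$ along the arc of $\bd K$ following them, and $p_2(tx),p_4(tx)$ along the arc preceding $p_2(x),p_4(x)$; moreover, by additivity of $\len_B$ over arcs and its translation invariance, $\gperim{K}(tx)-\gperim{K}(x)$ equals the total $\len_B$-length of the four arcs of $\bd K$ bounded by $p_i(x)$ and $p_i(tx)$, $i=1,\dots,4$. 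Since $t\mapsto\gperim{K}(tx)$ is concave (Theorem~\ref{g phi thm}(IV)), $-\left.\pderivleft{t}\gperim{K}(tx)\right|_{t=1}$ is the limit of this quantity divided by $1-t$ along $t=t_k$.

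Assume first that $\bd K$ has no face parallel to $x$. Then $\inspar(x)$ is unique, $\arc(x)\ne[p_1(x),p_2(x)]$, and $[p_1(x),p_2(x)]$ and $[p_3(x),p_4(x)]$ are the two chords of $K$ parallel to $x$ of Euclidean length $\|x\|$, while $[p_1(tx),p_2(tx)]$ and $[p_3(tx),p_4(tx)]$ are the corresponding chords of length $t\|x\|$ (concavity of the chord-length function of $K$ in direction $x$ keeps these chords moving continuously in $t$, whence $p_i(tx)\to p_i(x)$). From $p_1(tx)-p_2(tx)=tx$ and $p_1(x)-p_2(x)=x$ one gets the exact identity
\[\frac{p_1(tx)-p_1(x)}{1-t}-\frac{p_2(tx)-p_2(x)}{1-t}=-x.\]
As $t\uparrow1$ the first difference quotient is asymptotically directed along the right tangent $l_1(x)$, the second along the left tangent $l_2(x)$; since $l_1(x)$ and $l_2(x)$ are not parallel (this is where $x\in\intr DK$ is used, exactly as recalled before the statement of the theorem, and it fails precisely when $\arc(x)=[p_1(x),p_2(x)]$), the two quotients are bounded, and their limits $v_1\in l_1(x)-p_1(x)$ and $v_2\in l_2(x)-p_2(x)$ exist with $v_1-v_2=-x$. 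Equivalently $p_1(x)+v_1=p_2(x)+v_2\in l_1(x)\cap l_2(x)=\{p_{1,2}(x)\}$, so $v_1=p_{1,2}(x)-p_1(x)$ and $v_2=p_{1,2}(x)-p_2(x)$; the same argument for $p_3(tx),p_4(tx)$ yields the limits $p_{3,4}(x)-p_3(x)$ and $p_{3,4}(x)-p_4(x)$.

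Each arc $[p_i(x),p_i(tx)]_{\bd K}$ is a convex arc of $\bd K$ with one-sided tangent $l_i(x)$ at its endpoint $p_i(x)$; hence, as $t\uparrow1$, its Euclidean unit tangent varies arbitrarily little, so both its Euclidean length and its $\len_B$-length are asymptotic to the corresponding endpoint distances, that is $\len_B\bigl([p_i(x),p_i(tx)]_{\bd K}\bigr)=\|p_i(tx)-p_i(x)\|_B+o\bigl(|p_i(tx)-p_i(x)|\bigr)$ for any $B\in\cS$. Dividing by $1-t$ and using the previous paragraph together with the homogeneity of $\|\cdot\|_B$ gives $\len_B\bigl([p_1(x),p_1(tx)]_{\bd K}\bigr)/(1-t)\to\|v_1\|_B=\len_B\bigl([p_1(x),p_{1,2}(x)]\bigr)$, and similarly for $i=2,3,4$. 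Summing the four limits from the first paragraph and recognising that $\len_B\bigl([p_1(x),p_{1,2}(x)]\bigr)+\len_B\bigl([p_{1,2}(x),p_2(x)]\bigr)=\len_B(\thecap(K,x))$ and $\len_B\bigl([p_3(x),p_{3,4}(x)]\bigr)+\len_B\bigl([p_{3,4}(x),p_4(x)]\bigr)=\len_B(\thecap(K,-x))$ yields \eqref{rad:deriv:perim:cov:repr}.

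It remains to handle the case in which $\bd K$ has a face $F$ parallel to $x$ (then $F$ has Euclidean length $>\|x\|$, since $x\in\intr DK$, and $\inspar(x)$ is not unique). Choose a labelling with $[p_1(x),p_2(x)]\subseteq F$, so that $\arc(x)=[p_1(x),p_2(x)]$ and $\len_B(\thecap(K,x))=\|p_1(x)-p_2(x)\|_B=\|x\|_B$, and choose $\inspar(tx)$ with $p_2(tx)=p_2(x)$ and $p_1(tx)=p_2(x)+tx\in F$: then the arc at $p_1(x)$ is the segment $[p_1(x),p_2(x)+tx]\subseteq F$ of $\len_B$-length $(1-t)\|x\|_B$, the arc at $p_2(x)$ is trivial, and the pair $[p_3(x),p_4(x)]$ --- still a genuine chord of $K$ --- is treated exactly as in the two preceding paragraphs; one also checks that the four-arc reduction persists even though $\bd K$ and $\bd(K+tx)$ now overlap along a segment (argue directly with $\perim_B(K_t)$ and the inclusion $K_1\subseteq K_t$). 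When some $p_i(x)$ happens to be a vertex of $K$ the corresponding arc lies along an edge and needs no extra attention. The heart of the proof is the second paragraph --- the identification of the one-sided derivatives of $p_i(tx)$ with $p_{1,2}(x)-p_i(x)$, resp.\ $p_{3,4}(x)-p_i(x)$, via the exact identity and the non-parallelism of the one-sided tangents --- whereas the first and third paragraphs are routine; the principal technical burden is the bookkeeping in the degenerate configurations just discussed.
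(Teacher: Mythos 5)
Your argument is correct and reaches the same intermediate target as the paper's proof --- the limit, divided by $1-t$, of the total $\len_B$-length of the four small boundary arcs $[p_i(x),p_i(tx)]_{\bd K}$ --- but it computes that limit by a genuinely different mechanism. The paper fixes an affine frame with $\inspar(x)=[0,1]^2$, writes the two ends of $\arc(tx)$ as graphs of convex functions $f_1,f_2$, and extracts the limit from the constraint $t=f_1(s(t))+f_2(s(t))$ together with a homothety of the triangle $\conv\{p_1,p_{1,2},p_2\}$, using Lemma~\ref{piece of tangent lem} to replace arcs by tangent segments. You instead differentiate the vertex trajectories directly: the exact identity $(p_1(tx)-p_1(x))/(1-t)-(p_2(tx)-p_2(x))/(1-t)=-x$, combined with the convergence of secant directions to the one-sided tangent directions and the non-parallelism of $l_1(x)$ and $l_2(x)$ (which is where $x\in\intr DK$ enters, exactly as in the paper), forces the difference quotients to converge to $p_{1,2}(x)-p_1(x)$ and $p_{1,2}(x)-p_2(x)$; the arc-versus-chord asymptotic then converts this into the $\len_B$ statement. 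This is arguably cleaner and avoids the coordinate change, at the cost of an existence argument for the limits $v_1,v_2$ that you assert rather than spell out; it is routine (unboundedness of the quotients would contradict linear independence of the two tangent directions, and uniqueness of the decomposition of $-x$ in that basis pins down every subsequential limit), but it is the one step you should not leave implicit.

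Two small inaccuracies in the bookkeeping, neither fatal. First, a face of $K$ parallel to $x$ need not have Euclidean length greater than $\|x\|$ when $x\in\intr DK$: length exactly $\|x\|$, or less than $\|x\|$, is possible. The subcase of length less than $\|x\|$ is in fact nondegenerate for $t$ near $1$ and belongs to your main argument, while length exactly $\|x\|$ is covered verbatim by the segment computation you give. Second, if $K$ has faces parallel to $x$ on both sides, then $[p_3(x),p_4(x)]$ is not ``a genuine chord'' and must be handled by the same degenerate computation rather than by your second paragraph; the computation is identical. Finally, the set of $t$ excluded in your first paragraph is an interval $[0,\ell/\|x\|]$ rather than a finite set, but outside the degenerate case it stays away from $t=1$, which is all you need.
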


In order to prove Theorem~\ref{radial deriv of g perim} we need to introduce some notation and prove a preliminary lemma.
For a convex function \( f \) defined on an interval in \( \real \) the right derivative of \( f \)  will be denoted by  \( \rderiv{f} \).
\begin{lemma} 
	\label{piece of tangent lem}
	Let \( B \in \cS^2 \). Let \( f : [0,1] \rightarrow \real \) be a  convex function such that \( f(0)=0 \) and \( \rderiv{f}(0) \ge 0 \). For every \( 0 < s \le 1 \) we define
	\begin{align*}
		b(s) &:= \len_B \left(\setcond{(x,f(x))}{0 \le x \le s}\right) , 
		\\
		b^+(s) &:= \len_B \left(\setcond{(x,\rderiv{f}(0) x)}{0 \le x \le s}\right), 
	\end{align*}
	Then, as \( s \rightarrow +0 \), one has $b(s) - b^+(s)=o(s)$.
\end{lemma}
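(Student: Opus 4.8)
The plan is to estimate $b(s)$ and $b^+(s)$ separately and show their difference is $o(s)$. The key point is that near $x=0$ the graph of $f$ hugs the half-line $y=\partial^+f(0)\,x$ because $f$ is convex with $f(0)=0$, so the two curves being compared are close as point sets, and $\|\cdot\|_B$ is a seminorm (hence satisfies a triangle-type inequality along curves). Concretely, since $\|\cdot\|_B$ is a seminorm there is a constant $c>0$ with $\|v\|_B \le c\|v\|$ for all $v$, and also $\len_B(\gamma)\le c\,\len(\gamma)$ for every rectifiable curve $\gamma$. So it suffices to bound everything in the Euclidean length and then pass to $\len_B$.

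First I would compute $b^+(s)$ exactly: the segment from $o$ to $(s,\partial^+f(0)s)$ has Euclidean length $s\sqrt{1+(\partial^+f(0))^2}$, hence $b^+(s)=s\,\|(1,\partial^+f(0))\|_B$, a linear function of $s$. Next I would control $b(s)$. Writing $m:=\partial^+f(0)\ge 0$, convexity of $f$ with $f(0)=0$ gives $mx \le f(x)$ for $x\in[0,1]$ and $f(x)/x \to m$ as $x\to+0$; set $\eta(x):=f(x)/x - m \ge 0$, so $\eta(x)\to 0$ as $x\to+0$. For the Euclidean arclength of the graph over $[0,s]$ I would use the standard estimate that the arclength of a convex graph lies between the chord length and the sum of the two "staircase" legs:
\[
	\sqrt{s^2 + f(s)^2} \ \le\ \len\bigl(\{(x,f(x)):0\le x\le s\}\bigr)\ \le\ s + f(s).
\]
Since $f(s) = ms + s\,\eta(s)$ with $\eta(s)\to 0$, both bounds equal $s\sqrt{1+m^2} + o(s)$ as $s\to +0$; comparing with $b^+(s)/\text{(the }B\text{-to-Euclidean constant)}$ handled the same way, and using $\len_B(\gamma)\le c\,\len(\gamma)$ together with the reverse seminorm bound, one gets $b(s)-b^+(s)=o(s)$.

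The one place needing a little care — and the main (minor) obstacle — is that $\|\cdot\|_B$ is only a \emph{seminorm}, so it could vanish on a nonzero direction (when $B$ is a strip or $\real^2$). That does not actually cause trouble: all the inequalities above are preserved by any seminorm (monotonicity of $\len_B$ under taking sub-arcs, the triangle inequality $\len_B(\gamma_1\cup\gamma_2)\le \len_B(\gamma_1)+\len_B(\gamma_2)$, and $\len_B\le c\,\len$), so the displayed Euclidean sandwich transfers directly to $\len_B$ after multiplying by the constant $c$ from $\|\cdot\|_B\le c\|\cdot\|$. Thus writing $b(s)$ and $b^+(s)$ both as $s\,\|(1,m)\|_B + o(s)$ — the $o(s)$ for $b(s)$ coming from $f(s)=ms+o(s)$ and the arclength sandwich — yields $b(s)-b^+(s)=o(s)$, which is the assertion. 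If one prefers to avoid the arclength sandwich, an alternative is to parametrize by $x$ and write $\len_B(\{(x,f(x)):0\le x\le s\}) = \int_0^s \|(1,f'(x))\|_B\,dx$ (valid for a convex $f$, with $f'$ the a.e.\ derivative), note $f'(x)\to m$ as $x\to +0$ by convexity, and conclude $\frac1s\int_0^s\|(1,f'(x))\|_B\,dx \to \|(1,m)\|_B = b^+(s)/s$ by continuity of $\|(1,\cdot)\|_B$ and the dominated convergence theorem; dividing by $s$ and taking $s\to+0$ gives the claim.
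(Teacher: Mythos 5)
Your primary argument breaks down at the ``staircase'' sandwich. With $m:=\rderiv{f}(0)$, the upper bound $s+f(s)$ equals $s(1+m)+o(s)$, not $s\sqrt{1+m^2}+o(s)$; since $1+m>\sqrt{1+m^2}$ whenever $m>0$, the gap between your chord lower bound and your staircase upper bound is of order $s$, not $o(s)$, so the sandwich cannot determine the arclength to first order. A second, independent problem is the transfer from Euclidean length to $B$-length: $\|\dotvar\|_B$ is only a seminorm, so there is no reverse bound $c'\|v\|\le\|v\|_B$, and even for a norm the closeness of the Euclidean lengths of two \emph{different} curves says nothing about the closeness of their $B$-lengths (two segments of equal Euclidean length in different directions can have very different $B$-lengths). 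The comparison must therefore be carried out directly in $\|\dotvar\|_B$. This is what the paper does: writing $q(s)=(s,f(s))$, $p(s)=(s,ms)$ and $\delta(s)=f(s)-ms=o(s)$, it sandwiches $b(s)$ between $\len_B([o,q(s)])$ and $b^+(s)+\delta(s)\|(0,1)\|_B$ using the monotonicity of $\perim_B$ on nested convex hulls, and then checks via the triangle inequality that $\len_B([o,q(s)])\ge b^+(s)-\delta(s)\|(0,1)\|_B$. The essential idea you are missing is to replace the staircase corner $(s,0)$ by the tangent-line point $p(s)$, which makes the upper path's $B$-length exceed $b^+(s)$ by only $o(s)$.

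Your alternative at the end is, by contrast, correct and is a genuinely different route from the paper's: the formula $b(s)=\int_0^s\|(1,f'(x))\|_B\,\dd x$ is legitimate ($f$ is Lipschitz on $[0,s]$ for $s<1$, and the Stieltjes definition of $\len_B$ is parametrization-invariant), the right derivative of a convex function is right-continuous at $0$ so $f'(x)\to m$ as $x\to+0$, and averaging the bounded integrand gives $b(s)/s\to\|(1,m)\|_B=b^+(s)/s$, which is the assertion. This analytic argument buys you independence from the geometric monotonicity of $\perim_B$, at the price of invoking the arclength integral formula and a convergence theorem. I would lead with that argument and delete the sandwich.
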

\begin{proof}
	All asymptotic expansions in this proof are considered for \( s \rightarrow +0 \). Taking into account \( f(0)=0 \) and using the definition of \( \rderiv{f} \) we obtain  
	\begin{equation}
		\label{f(s) asympt}
		f(s) = \rderiv{f}(0) s + o(s).
	\end{equation}
	Hence
	\begin{equation*}
		\delta(s) :=f(s) - s \rderiv{f}(0) = o(s).	
	\end{equation*}
	We introduce
	\[
		b^-(s) = \len_B \left(\setcond{\left(x,\frac{f(s)}{s} x\right)}{0 \le x \le s}\right),
	\] 
	\( p(s)=(s, \rderiv{f}(0) s) \) and \( q(s)=(s,f(s)) \).  Observe that
	\[
		\len_B ([p(s),q(s)]) = \delta(s) \| (0,1)\|_B.
	\]
	We recall that $\perim_B$ is a monotone valuation, by Theorem~\ref{repr phi thm}. The inclusions
	\[
		[o,q(s)]
			\subset
		\conv\left(\{o,q(s)\}\cup\setcond{(x,f(x))}{0 \le x \le s}\right)
			\subset
		\conv\{o,p(s),q(s)\}
	\]
	together with the definition of $\perim_B$ (see \eqref{perim def}) imply
	\[
	 b^-(s)\leq b(s)\leq b^+(s) + \delta(s) \|(0,1)\|_B.
	\]
	The inclusion $[o,p(s)] \subset \conv\{o,p(s),q(s)\}$ and the definition of $\perim_B$ imply
	\[
	 b^+(s)-\delta(s) \| (0,1)\|_B\le b^-(s).
	\]
	(The latter is just a triangle inequality for points $o,p(s),q(s)$ with respect to the seminorm $\|\dotvar\|_B$.) Consequently,
	\(
		|b(s)- b^+(s)| \le \delta(s) \|(0,1)\|_B= o(s),
	\)
	which yields the assertion.
\end{proof}

\begin{proof}[Proof of Theorem~\ref{radial deriv of g perim}]
	Let $x \in \intr DK\setminus \{o\}$. Since \( \inspar(x) = \inspar(-x) \) we have 
	\begin{align*}
        \gperim{K}(x) = \perim_B(K) - \len_B (\arc(x)) - \len_B (\arc(-x)).
	\end{align*}
	It suffices to show that the left derivative
	\[
		a(x):=\left. \pderivleft{t} \len_B (\arc(t x)) \right|_{t=1}
	\]
	exists and is equal to $\len_B (\thecap(x))$. In the case \( \arc(x) = [p_1(x),p_2(x)] \) it is easy to verify that \( a(x) = \|x\|_B = \len_B (\thecap(x)) \).  Assume that \( \arc(x) \ne [p_1(x),p_2(x)] \).  Then \( l_1(x) \) and \( l_2(x) \) are both not parallel to \( x \).  Changing a coordinate system in \( \real^2 \) with an appropriate nonsingular affine transformation, without loss of generality we can assume that \( x=(0,1) \) and \( \inspar(x)=[0,1]^2 \). Then we can introduce an \( \eps>0 \) and convex functions \( f_1, f_2 : [0,\eps] \rightarrow \real \) with \( f_1(0)=f_2(0)=0 \) such that 
	\begin{align*}
		\setcond{(-s,f_1(s))}{0 \le s \le \eps}
			\subset & \bd K, 
		\\
		\setcond{(-s,1-f_2(s))}{0 \le s \le \eps}
			\subset & \bd K.
	\end{align*}
	For every sufficiently small \( t \ge 0 \)  one can uniquely define the parameter \( s(t) \ge 0 \) such that \( [p_1((1-t)x),p_2((1-t) x)] \subset \{-s(t)\} \times \real \).  In other words, \( s(t) \) is the distance between \( \aff [p_1(x),p_2(x)] \) and \( \aff [p_1((1-t)x),p_2((1-t)x)] \).  For \( i \in \{1,2\} \) let us define \( b_i(s), b^+_i(s) \) with respect to the function \( f_i(s) \) in the same way as \( b(s), b^+(s) \) are defined in Lemma~\ref{piece of tangent lem} with respect to a function \( f(s) \).  Let also $\delta_i(s) := f_i(s) - \rderiv{f_i}(0) s$ for $i \in \{1,2\}$. The function \( a(x) \) can be expressed as
	\[
        a(x) := 
		\lim_{t \rightarrow +0} 
		\frac{1}{t} \Bigl(\len_B (\arc(x))- \len_B (\arc((1-t)x)) \Bigr).
	\]
	In the rest of the proof we shall consider asymptotic behaviors  for \( t \rightarrow +0 \). Note that \( s(t) \rightarrow +0 \) as \( t \rightarrow +0 \). Let us determine the asymptotic behavior of 
	\[
        a_t(x) := 
		\frac{1}{t} \Bigl(\len_B (\arc(x))- \len_B(\arc((1-t)x)) \Bigr).
	\]
	To this end we shall use Lemma~\ref{piece of tangent lem} and the relation 	
	\begin{equation}
		\label{t=f1+f2}
		t=f_1(s(t))+f_2(s(t)),  
	\end{equation}
	which holds by construction.  

	In the following computations, for the sake of brevity we write \( f_i \) rather than \( f_i(s(t)) \). Analogously, we also omit the explicit indication of the dependency on \( s(t) \) for \( \delta_i(s(t)) \), \( b_i(s(t)) \) and \( b_i^+(s(t)) \) (where \( i \in \{1,2\} \)). 

	We shall determine the limit of 
	\begin{align*}
        a_t(x)
			=& \frac{1}{t}(b_1+b_2) = \frac{1}{t} (b_1^+ + b_2^+) + \frac{1}{t}(b_1 - b_1^+ + b_2 - b_2^+),
	\end{align*}
	as $t \rightarrow +0$.
	In view of \eqref{t=f1+f2} and Lemma~\ref{piece of tangent lem} one has
	\begin{align}
		\label{rest term}
		\frac{1}{t}(b_1 - b_1^+ + b_2 - b_2^+) = \frac{o(s(t))}{f_1 + f_2} = \frac{o(s(t))}{c \cdot  s(t) + o(s(t))},
	\end{align}
	where 
	\[
		c = \rderiv{(f_1+f_2)}(0).
	\]

	Note that $c>0$. This can be shown arguing by contradiction. Assume that $\rderiv{(f_1+f_2)}(0) = 0$. Then $\rderiv{f_1}(0)= \rderiv{f_2}(0) = 0$. It follows that the body $K$ has parallel supporting lines at points  $p_1(x)$ and $p_2(x)$. The latter yields $x \in \bd DK$, contradicting the assumption $x \in \intr DK \setminus \{o\}$. Taking into account $c > 0$, we conclude that the term \eqref{rest term} converges to $0$, as $t \rightarrow +0$. Thus, it remains to determine the limit of $\frac{1}{t} (b_1^+ + b_2^+)$.

	Taking into account \eqref{t=f1+f2}, we obtain

	\begin{align*}
        \frac{1}{t} (b_1^+ + b_2^+) 
			& = 
		\frac{b_1^+ + b_2^+}{t-\delta_1 - \delta_2} \cdot \frac{t-\delta_1 - \delta_2}{t} 
			\\ & = 
		\frac{b_1^+ + b_2^+}{t-\delta_1 - \delta_2} \cdot \frac{f_1 + f_2 -\delta_1 - \delta_2}{f_1 + f_2} 
			\\ & = 
		\frac{b_1^+ + b_2^+}{t-\delta_1 - \delta_2} \cdot \frac{c \cdot s(t)}{c \cdot s(t) + o(s(t))}
	\end{align*}
	The quotient
	\[
		\frac{c \cdot s(t)}{c \cdot s(t) + o(s(t))}
	\]
	goes to $1$, as $t \rightarrow +0$. Let us analyze the other quotient 
	\[
		\frac{b_1^+ + b_2^+}{t-\delta_1 - \delta_2}.
	\]
	Consider the triangle \( T:=\conv \{p_1(x),p_{1,2}(x),p_2(x)\} \). For the sake of brevity we shall write $p_1, p_2, p_{1,2}$ omitting the explicit dependence on $x$.  The section \( T \cap (\{-s(t)\} \times \real) \) of $T$ has Euclidean length \( 1-t + \delta_1 + \delta_2 \). 
	We introduce points \( p_1^+ \) and \( p_2^+ \) such that \( [p_1^+,p_2^+]=T \cap (\{-s(t)\} \times \real) \) and \( p_i^+ \in [p_{1,2},p_i] \) for \( i \in \{1,2\} \).
	The edge \( [p_1,p_2] \) of \( T \) has Euclidean length one. 
	Thus, using the homothety of \( T \) and \( \conv \{p_1^+,p_2^+,p_{1,2}\} \), we get for 
	\[
        \frac{\|p_i - p_{1,2}\|_B}{1} = \frac{\|p_i - p_{1,2}\|_B - b_i^+}{1 -t + \delta_1 + \delta_2} \qquad \forall i \in \{1,2\}.
	\]
	The latter amounts to 
	\[
        (t - \delta_1 - \delta_2) \|p_i - p_{1,2}\|_B = b_i^+ \qquad \forall i \in \{1,2\}.
	\]
	Hence
	\[
		\frac{b_1^+ + b_2^+}{t-\delta_1 - \delta_2} = \|p_1 + p_{1,2}\|_B + \|p_2 + p_{1,2}\|_B.
	\]
	Summarizing we conclude that $a_t(x)$ goes to $\|p_1 + p_{1,2}\|_B + \|p_2 + p_{1,2}\|_B$, as $t \rightarrow +0$.
\end{proof}


\section{Retrieval results}  
\label{sect retrieval}

The proof of Theorem~\ref{retrieval centr sym thm} follows closely that of the corresponding result for $g_K$. It is based on three ingredients. The first one is Brunn-Minkowski inequality and the characterization of its equality cases. The second one is Theorem~\ref{g phi thm} (Assertions~(II) and (III)). The third one, not present in the case of $g_K$, is the linearity of $\perim_B$ with respect to Minkowski addition.

The proof of Theorem~\ref{retrieval of polygons thm} has the same structure of that of the determination of a convex polygon $P$ by $g_P$ contained in \cite{MR1909913}.
It is roughly divided in two steps.
In the first step (Lemma~\ref{curv info lem}) one uses the shape of $\supp \gphi{P}$ and the asymptotic behavior of $\gphi{P}$ near $\bd\supp \gphi{P}$ to determine some information on $\bd P$. This information is only local and determined up to a reflection of $P$.
For instance for each $u\in\ucircle$ one can determine whether the two lines orthogonal to $u$ and supporting $P$ intersect $\bd P$ in a vertex and an edge or in two vertices or in two edges, and one can determine the length of these edges and the normal cone at these vertices. However this is known up to a reflection of $P$, and thus at this stage we do not know, for instance, which of the two supporting lines contains an edge and which a vertex.
If $Q$ denotes a polygon with $\gphi{P}=\gphi{Q}$, this leads naturally to a decomposition of $\bd P$ in a finite number of pairs of antipodal arcs with the property that each pair of arcs is also contained in a suitable translation or reflection of $\bd Q$, with these translations and reflections that a priori may vary from pair to pair.
It is the goal of the second step to prove that they are the same for all pairs.
This is done via Lemma~\ref{symmetric arcs lem}, which proves that every pair of \emph{maximal} antipodal arcs contained in $\bd P\cap \bd Q$ consists of two  arcs which are  reflections of each other. This proves that ``the reflection does not matter'' and opens the way to the conclusion.
One key ingredient in the second step is the geometric interpretation of the radial derivative of $\gperim{P}$ provided by Theorem~\ref{radial deriv of g perim}. 

The proof of Theorem~\ref{retrieval for width-covar thm} is still structured in  the same two steps. However each step has to be proved following new ideas.  In the first step (Lemmas~\ref{lem:description_of_core} and~\ref{lem:curv_info_width}) we use the possibility of identifying a certain subset of $\supp \gphi{P}$, which we call $\core{P}$ (it is the subset consisting of $x\in\supp \gphi{P}$ such that $\gphi{P}(x)=\wid(P,z)-\sprod{x}{z}$), and to read in $\core P$ some information about $P$.
Regarding the second step,  the key lemma holds in a weaker form when $\phi(\cdot)=\wid(\cdot,z)$. Indeed  the proof of Lemma~\ref{symmetric arcs lem} rests ultimately on the fact that there is a strict inequality between the values of \( \phi \) on two triangles (i.e. the triangles \( \conv\{c_1,c_2,c_3\} \) and \( \conv\{d_1,d_2,d_3\} \) in Fig.~\ref{fig:arcs}) because one is strictly contained in a translation of the other.
Since the width is not strictly monotone, a strict inequality holds only under some assumptions on the position of the triangles with respect to \( z \). The weak form of this lemma, contained in Lemmas~\ref{lem:w_symmetric_arcs} and~\ref{lem:w_symmetric_arcs_conclusion}, is still sufficient to conclude.

\subsection{Retrieval result for centrally symmetric convex bodies (Theorem~\ref{retrieval centr sym thm})}

\begin{proof}[Proof of Theorem~\ref{retrieval centr sym thm}] Let  $H\in\cK^2_0$ be such that $\gphi{K}=\gphi{H}$. Theorem~\ref{g phi thm} implies
\begin{gather}
 DK=DH\label{censymm1},\\
 2\vol(K)\perim_B(K)+\alpha\left(\vol(K)\right)^2=2\vol(H)\perim_B(H)+\alpha\left(\vol(H)\right)^2. \label{censymm2}
\end{gather}
Equality~\eqref{censymm1}, the possibility of representing $\perim_B$ as a mixed area and the linearity  of the mixed area imply
\begin{equation}\label{censymm3}
 \perim_B(K)=\frac1{2}\perim_B(DK)=\perim_B(H).
\end{equation}
Equality~\eqref{censymm1} and the Brunn-Minkowski inequality (see \cite[Theorem~7.3.1]{Schneider-1993}) imply
\begin{equation}\label{censymm4}
\vol(H)\leq \vol(K),
\end{equation}
with equality if and only if $H$ is centrally symmetric . Formulas~\eqref{censymm2}, \eqref{censymm3} and~\eqref{censymm4} imply $\vol(H)=\vol(K)$ and, as consequence, the central symmetry of $H$. Note that a centrally symmetric convex body coincides, up to translation,  with its difference body scaled by $1/2$, that is, with the support of its $\phi$-covariogram scaled by $1/2$.
\end{proof}

%
%
%
%

\subsection{
	Determination of polygons from covariograms generated by strictly monotone valuations
	(Theorem~\ref{retrieval of polygons thm}).
}

\label{section strictly monotone}

Following Bianchi \cite{MR1909913}, given \( u\in \ucircle \), the \emph{curvature information} \( \ci(P,u) \) of a convex polygon \( P \subset \real^2 \) at $u$  is defined by
\begin{equation*}
	\ci(P,u):=\begin{cases}\len (F(P,u))& \text{if $F(P,u)$ is an edge,}\\
	           N(P,a)& \text{if $F(P,u)=\{a\}$ for some vertex $a$ of $P$.}
	          \end{cases}
\end{equation*}
More informally, \( \ci(P,u) \) provides the knowledge of whether \( F(P,u) \) is an edge or a vertex together with the length of \( F(P,u) \), when \( F(P,u) \) is and edge, and with the normal cone of \( P \) at \( F(P,u) \), when \( F(P,u) \) is a vertex.

\begin{lemma}
	\label{curv info lem}
	Let \( \phi\in \Phi^2 \setminus \{0\} \) be  strictly monotone. Let \( P \) be a convex polygon in $\real^2$ and $u \in \ucircle$. Then \( g_{P,\phi} \) determines the set
	\begin{equation*}
		\{\ci(P,u), \ci(-P,u)\}.
	\end{equation*}
\end{lemma}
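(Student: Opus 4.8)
The plan is to extract the desired curvature information directly from the boundary behaviour of $g_{P,\phi}$ near the boundary of its support $DP$. First I would recall from Theorem~\ref{g phi thm}\eqref{g phi thm_a} that $\supp g_{P,\phi} = DP$, so $g_{P,\phi}$ determines the polygon $DP$, hence (by the face relation \eqref{faces of diff body}) for every $u\in\ucircle$ the face $F(DP,u) = F(P,u) - F(P,-u)$. The key point is that $F(DP,u)$ is a segment whose length is $\len(F(P,u)) + \len(F(P,-u))$ (one or both summands possibly zero), and from the shape of $DP$ alone one reads off this \emph{sum}; the task is to recover the unordered \emph{pair} $\{\ci(P,u),\ci(-P,u)\}$, equivalently the pair $\{\ci(P,u),\ci(P,-u)\}$. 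I would first dispose of the trivial cases: if $F(DP,u)$ is a vertex then necessarily $F(P,u)$ and $F(P,-u)$ are both vertices, and the normal cones $N(P, F(P,u))$ and $N(P, F(P,-u))$ are both equal to $N(DP, F(DP,u))$, so the pair is read off directly from $DP$. So assume $F(DP,u)$ is an edge of positive length $\ell$.

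Next I would use the radial derivative formula (Theorem~\ref{radial deriv of g} together with Theorem~\ref{radial deriv of g perim}, combined via \eqref{decomposition_of_phi-cov}) to probe $g_{P,\phi}$ along rays approaching a point $x_0$ in the relative interior of the edge $F(DP,u)$ from inside $DP$. Writing $x_0 = r u_0$ with $u_0 = x_0/\|x_0\|$, the left radial derivative $-\pderivleft{t} g_{P,\phi}(tx_0)\big|_{t=1}$ equals $\alpha\,\vol(\inspar(P,x_0)) + \len_B(\thecap(P,x_0)) + \len_B(\thecap(P,-x_0))$. Here, since $x_0$ lies on $\bd DP$ with normal $u$, the inscribed parallelogram $\inspar(P,x_0)$ degenerates: two of its vertices $p_1(x_0),p_2(x_0)$ coincide with (or lie in) $F(P,u)$, and $p_3(x_0),p_4(x_0)$ lie in $F(P,-u)$; the parallelogram itself degenerates to the segment $F(DP,u)$ itself, so $\vol(\inspar) = 0$. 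Meanwhile $\thecap(P,x_0)$ is a polygonal line lying on $\bd P$ "just past" $F(P,u)$, built from the tangent lines (= edges of $P$) adjacent to $F(P,u)$, and similarly for $\thecap(P,-x_0)$ near $F(P,-u)$. By translating $x_0$ across the edge $F(DP,u)$ and taking limits toward its endpoints, these caps shrink, and in the limit the quantity $-\pderivleft{t} g_{P,\phi}(tx_0)$ at the endpoints of $F(DP,u)$ records exactly the $B$-lengths of $F(P,u)$ and $F(P,-u)$ separately — more precisely, sweeping the ray foot along the relative interior of $F(DP,u)$ produces two distinguished sub-intervals (one for each of $F(P,u)$, $F(P,-u)$), whose $B$-lengths are $\|x\|_B$ restricted appropriately; together with the Euclidean length $\ell$ read from $DP$, and using that $\len_B$ of a segment in direction $v$ is its Euclidean length times $\|v\|_B$, one recovers $\len(F(P,u))$ and $\len(F(P,-u))$ as an unordered pair. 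When one of these lengths is zero, the corresponding endpoint is a vertex, and the normal cone at that vertex is recovered from the directions of the two adjacent edges of $DP$ meeting at the corresponding endpoint of $F(DP,u)$ (which equal the edge directions of $P$ adjacent to that vertex, again as an unordered pair, by \eqref{faces of diff body}). In all cases this produces the unordered pair $\{\ci(P,u),\ci(P,-u)\}$, which is what is claimed since $\ci(-P,u) = \ci(P,-u)$.

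The step I expect to be the main obstacle is making rigorous the claim that the degenerate limits of $\thecap(P, x)$ and $\thecap(P,-x)$, as the foot of the ray runs along $\bd DP$, genuinely separate the contributions of $F(P,u)$ and $F(P,-u)$ — i.e. that the data $g_{P,\phi}$ really sees the two faces individually and not merely their Minkowski sum. This is where strict monotonicity of $\phi$ is used: it guarantees (via Proposition~\ref{prop on B and per B}) that $\len_B$ assigns positive length to every non-degenerate segment, so that a positive-length face $F(P,u)$ produces a genuine jump in the radial-derivative profile that cannot be masked, and distinct configurations of $(\len F(P,u), \len F(P,-u))$ with the same sum $\ell$ are distinguished by the $B$-length profile along the edge $F(DP,u)$. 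Concretely, I would fix coordinates as in the proof of Theorem~\ref{radial deriv of g perim}, parametrize points of the edge $F(DP,u)$, compute $\vol(\inspar(P,x))$ and the cap lengths as piecewise-linear functions of this parameter, and observe that the breakpoints and slopes of the resulting piecewise-linear function determine $\len F(P,u)$ and $\len F(P,-u)$ and, at a zero breakpoint, the adjacent edge normals; I would then check the monotone-valuation inequalities needed to see that no information is lost. The remaining details — handling the vertex–vertex and edge–edge sub-cases uniformly, and checking that $\alpha\vol(\inspar)$ indeed vanishes in the boundary limit — are routine given Theorems~\ref{radial deriv of g} and \ref{radial deriv of g perim}.
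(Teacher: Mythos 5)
Your overall instinct --- read the curvature information off the behaviour of $g_{P,\phi}$ at and near $\bd(DP)$ --- is the right one, and for the \emph{lengths} $\{\len F(P,u),\len F(P,-u)\}$ the paper does something in this spirit but much simpler: it evaluates $g_{P,\phi}$ \emph{at} the midpoint $x_0$ of the edge $F(DP,u)$, where $P\cap(P+x_0)$ is a segment of length $\min\{\len F(P,u),\len F(P,-u)\}$, and combines this with the sum $\len F(DP,u)=\len F(P,u)+\len F(P,-u)$. No radial derivatives are needed there. However, your proposal has several genuine gaps, two of which concern exactly the part of the lemma where the real work lies.

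First, the vertex--vertex case. You assert that if $F(DP,u)$ is a vertex then $N(P,F(P,u))$ and $N(P,F(P,-u))$ both equal $N(DP,F(DP,u))$, so the pair is ``read off directly from $DP$''. This is false: the correct relation is $N(DP,a_1-a_2)=N(P,a_1)\cap(-N(P,a_2))$, an intersection, and generically this is a proper subcone of each of $N(P,a_1)$ and $-N(P,a_2)$ (take $P$ a triangle). The same entanglement defeats your treatment of the vertex--edge case: each edge of $DP$ satisfies $F(DP,w)=F(P,w)-F(P,-w)$, so an edge of $DP$ adjacent to an endpoint of $F(DP,u)$ is parallel either to an edge of $P$ at the vertex $a$ \emph{or} to an edge of $P$ at an endpoint of the opposite face, and $DP$ alone does not tell you which. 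Recovering the cones genuinely requires the function values of $g_{P,\phi}$ in the interior of $DP$; the paper does this by showing that $g_{P,\phi}$ is constant on the intersection of a line $m$ parallel to $F(DP,u)$ with a certain triangle $\conv\{x_1,x_2,y\}$ (because $P\cap(P+x)$ moves by translation there) and, by strict monotonicity, strictly smaller outside it --- this detects the edge directions at the vertex. The vertex--vertex case is then bootstrapped from the vertex--edge case via auxiliary directions $w$ and the cone-intersection formula. None of this is present in your plan.

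Second, your use of strict monotonicity is off. You claim it guarantees, via Proposition~\ref{prop on B and per B}, that $\len_B$ is positive on every nondegenerate segment. But $\phi=\perim_B+\alpha\vol$ is strictly monotone whenever $\alpha>0$, even if $B$ is a strip or all of $\real^2$, in which case $\len_B$ vanishes on segments parallel to the strip. This degenerate case $\|\cR u\|_B=0$ is precisely where the midpoint evaluation fails and where the paper switches to the first-order expansion $g_{P,\phi}(x_0-\eps u)=\bigl(\beta+\alpha\min\{\len F(P,u),\len F(P,-u)\}\bigr)\eps+o(\eps)$, using $\alpha>0$. Your plan needs this case split. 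Finally, a smaller but concrete error: for $x_0\in\relint F(DP,u)$ the inscribed parallelogram does \emph{not} degenerate to a segment of zero area; its limit has two opposite edges equal to the extreme maximal chords joining $F(P,-u)$ to $F(P,u)$, and its area equals (overlap length)$\times\wid(P,u)>0$ --- consistent with the linear, not quadratic, decay of $g_P$ at that edge. Relatedly, Theorems~\ref{radial deriv of g} and~\ref{radial deriv of g perim} are proved only for $x\in\intr DK$, so the boundary limits you rely on would themselves require justification.
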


\begin{remark}
 The concept of \emph{synisothetic} pairs of convex sets has been introduced and used in \cite{BiCrosscov09} and \cite{Bianchi-2009-polytopes}.  We remark that the conclusion of Lemma~\ref{curv info lem} can be expressed in terms of synisothesis as follows. If \( P \) and \( Q \) are convex polygons with \( g_{P,\phi}=g_{Q,\phi} \) then  \( (P,-P) \) and \( (Q,-Q) \) are synisothetic. 
\end{remark}

\begin{proof}[Proof of Lemma~\ref{curv info lem}]

The proof of this lemma is divided into the proofs of Claims~\ref{opposite lengths claim}, \ref{determ vertex-edge claim} and \ref{determ vertex-vertex claim}. 
We recall that $DP=\supp\gphi{P}$ and that  we assume that the $\phi$-covariogram decomposes as  in \eqref{decomposition_of_phi-cov}.

\begin{claim}
	\label{opposite lengths claim}
	The function \( g_{P,\phi} \) determines \( \{\len F(P,u),\len F(P,-u)\}. \)
\end{claim}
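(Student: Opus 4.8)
The plan is to extract the two edge-lengths $\len F(P,u)$ and $\len F(P,-u)$ from the local behaviour of $\gphi{P}$ near the boundary of its support $DP$ in the direction $u$. Recall $\supp \gphi{P} = DP$ by Theorem~\ref{g phi thm}\eqref{g phi thm_a}, and that by \eqref{faces of diff body} the face $F(DP,u) = F(P,u) - F(P,-u)$ is a segment whose Euclidean length is $\len F(P,u) + \len F(P,-u)$. So the \emph{sum} of the two lengths is immediately visible from $DP$ alone; the content of the claim is that the \emph{unordered pair} $\{\len F(P,u),\len F(P,-u)\}$ is visible, i.e.\ we can also recover the product, or equivalently distinguish the two individual values.

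First I would fix a point $x_0$ in the relative interior of the face $F(DP,u)$ and study $\gphi{P}$ as $x \to x_0$ from inside $DP$, along the inward normal direction $-u$. For a polygon, when $x$ is close to $\bd DP$ near this face, the self-intersection $P \cap (P+x)$ is $P$ with two thin "slivers" cut off near the two faces $F(P,u)$ and $F(P,-u) = -F(-P,u)$; these slivers are (for $x$ generic in the relevant normal cone) triangles or trapezoids whose shapes are governed by the right/left tangent lines of $P$ at the endpoints of those faces, exactly the data packaged by $\ci(P,\cdot)$ near $u$. The key tool is Theorem~\ref{radial deriv of g perim}: the one-sided radial derivative of $\gperim{P}$ at $x_0$ (approached from a nearby interior ray) is $\len_B(\thecap(P,x)) + \len_B(\thecap(P,-x))$, and as $x \to x_0$ these $\thecap$'s degenerate onto the two faces. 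The upshot is that $\gphi{P}$ near $x_0$ has the form $\phi(P) - (\text{affine in the displacement from } x_0) + O(\text{second order})$, where the affine part's behaviour as one moves $x_0$ along the face encodes how the sliver-widths $\len F(P,u)$ and $\len F(P,-u)$ trade off against each other. Concretely, translating $x_0$ by $t v$ for $v$ a direction spanning the face $F(DP,u)$, the two slivers change in opposite senses: one grows, the other shrinks, and the \emph{second-order} coefficient in $t$ (the "curvature" of $\gphi{P}$ along the face, using the concavity from Theorem~\ref{g phi thm}(IV)) is a symmetric function of the pair $\{\len F(P,u), \len F(P,-u)\}$ that, together with their known sum, determines the pair.

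More carefully, I would proceed as follows. Let $L := \len F(DP,u) = \len F(P,u) + \len F(P,-u)$, which is read off from $\supp \gphi{P}$. For $x$ ranging over $\relint F(DP,u)$, the value $\gphi{P}(x)$ is constant and equal to $\phi$ evaluated on a specific degenerate configuration; the first-order data come from approaching the face transversally. Parametrize the face of $DP$ as $x_0 + t v$, $v \in \ucircle$ along the face, $t$ in an interval of length $L$; for each such $t$ there is a unique inscribed parallelogram $\inspar(P, x_0 + tv)$ (well, up to the usual genericity), and one of its pairs of opposite edges slides along $F(P,u)$ while the other slides along $-F(-P, u)$. Moving $t$ changes the position of the chord within each edge; at the two extreme values of $t$ the chord reaches an endpoint of one of the two faces, which is precisely where $\len F(P,u)$ resp.\ $\len F(P,-u)$ is "used up". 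Thus the sub-interval of $t$-values for which the inscribed parallelogram has one edge strictly inside $F(P,u)$ has length $\len F(P,u)$, and similarly for $F(P,-u)$; but at this stage we only see the union of these, which is all of the face, so this alone does not separate them. What does separate them: along the transversal direction $-u$, the left radial derivative of $\gperim{P}$ (Theorem~\ref{radial deriv of g perim}) and of $g_P$ (Theorem~\ref{radial deriv of g}) jump as $t$ crosses those two extreme values, and the \emph{sizes} of these jumps are determined by the tangent-cone data at the endpoints, hence by the shape but \emph{symmetrically} in the two faces. So the pair of jump locations along $F(DP,u)$, read relative to its two endpoints, gives exactly $\{\len F(P,u), \len F(P,-u)\}$, with the reflection ambiguity (which endpoint is which) being exactly the ambiguity allowed in the claim.

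The main obstacle I anticipate is handling the non-generic directions $u$ — those for which the inscribed parallelogram is not unique (i.e.\ $P$ has a face parallel to $u$ strictly longer than the relevant chord), and more importantly ensuring that the "jumps" in the one-sided radial derivatives are genuinely detectable from $\gphi{P}$, i.e.\ that the contribution of the volume term $\alpha g_P$ (which is $C^1$ in the radial direction by Theorem~\ref{radial deriv of g}, smooth enough not to obscure) does not cancel the perimeter term's jump. Since $\perim_B$ is strictly monotone iff $B$ is strictly convex (Proposition~\ref{prop on B and per B}) but here $\phi$ is only assumed strictly monotone — which, by the remark after Theorem~\ref{retrieval of polygons thm}, allows $\alpha > 0$ with $B$ an arbitrary element of $\cS^2$, possibly even $B = \real^2$ so $\perim_B \equiv 0$ — I must make sure the argument works using \emph{strict monotonicity of $\phi$ itself}, not of $\perim_B$: when $\perim_B$ vanishes, $\gphi{P} = \alpha g_P$ and the claim reduces to the known covariogram fact; when $\alpha = 0$, $B$ is forced to be bounded (else $\phi$ is a width, not strictly monotone) and $\perim_B$ is strictly positive on non-degenerate sets, so the $\thecap$ lengths are strictly positive and the jumps are real. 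I would therefore split into these cases, or better, argue uniformly that strict monotonicity of $\phi$ forces the second-order coefficient of $\gphi{P}$ along $F(DP,u)$ to be a strictly monotonic function of $\min\{\len F(P,u), \len F(P,-u)\}$ (given the fixed sum $L$), from which injectivity — hence recovery of the pair — follows.
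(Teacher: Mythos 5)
Your core mechanism is the paper's: the sum \( \len F(P,u)+\len F(P,-u) \) is read off from \( \len F(DP,u) \) via \eqref{faces of diff body}, and the missing datum is \( \min\{\len F(P,u),\len F(P,-u)\} \), which is encoded in the behaviour of \( \gphi{P} \) on and near the face \( F(DP,u) \). The paper extracts it in one line: for \( x_0 \) the midpoint of \( F(DP,u) \), the set \( P\cap(P+x_0) \) is a segment parallel to \( \cR u \) whose Euclidean length is exactly \( \min\{\len F(P,u),\len F(P,-u)\} \), so \( \gphi{P}(x_0) \) is a known multiple (namely \( \|\cR u\|_B \) times) of that minimum --- this is precisely the ``plateau value'' of your sliding-window overlap. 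The extra machinery you invoke (jumps of one-sided radial derivatives, a ``second-order coefficient along the face'', the \( \thecap \) interpretation) is not needed here, and one statement is off: the two kinks of the trapezoidal overlap function both sit at distance \( \min\{a,b\} \) from their respective endpoints of \( F(DP,u) \), so the kink locations yield the minimum (which, with the known sum, yields the pair), not the pair \( \{a,b\} \) directly.

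The one genuine soft spot is the degenerate directions. When \( \|\cR u\|_B=0 \) the restriction of \( \gphi{P} \) to \( F(DP,u) \) is identically zero (both \( \perim_B \) and \( \vol \) vanish on segments parallel to \( \cR u \)), so neither the plateau value nor any ``curvature along the face'' carries information. This happens not only when \( B=\real^2 \) (your first case) but also when \( B \) is a strip parallel to \( \cR u \) with \( \alpha>0 \); your explicit split (``\( \perim_B\equiv 0 \)'' or ``\( \alpha=0 \)'') omits this case, and your proposed uniform argument via the second-order coefficient along the face would fail there. The fix is the normal-direction expansion you gesture at: \( \gphi{P}(x_0-\eps u)=\bigl(\beta+\alpha\min\{\len F(P,u),\len F(P,-u)\}\bigr)\eps+o(\eps) \), where \( \beta \) is known from \( B \) and strict monotonicity of \( \phi \) forces \( \alpha>0 \) in this case, so the minimum is still recoverable. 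With that case added, your argument closes and coincides with the paper's proof of Claim~\ref{opposite lengths claim}.
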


\begin{proof}
	If \( F(DP,u) \) is a vertex, then both \( F(P,u) \) and \( F(P,-u) \) are vertices, by \eqref{faces of diff body}. Assume that \( F(DP,u) \) is an edge. The knowledge of \( DP \) gives
	\begin{equation}\label{sum_lengths_opposite_edges}
		\len (F(DP,u)) = \len (F(P,u))+\len (F(P,-u)),
	\end{equation}
	due to \eqref{faces of diff body}. Let \( x_0 \) be the midpoint of \( F(DP,u) \). One has
	\[
		g_{P,\phi}(x_0)=\min\{\len_B (F(P,u)),\len_B (F(P,-u))\}.
	\]
	Thus, unless \( \|\cR u\|_B=0 \), \( g_{P,\phi} \) determines \( \min\{\len (F(P,u)),\len(F(P,-u))\} \). This together with the information contained in \eqref{sum_lengths_opposite_edges} gives \( \{\len (F(P,u)), \len (F(P,-u))\} \).

	If \( \|\cR u\|_B=0 \), then \( l_{\cR u}\subset B \) and either \( B=\real^2 \) or \( B \) is an \( o \)-symmetric strip parallel to \( \cR u \).  Consider the case $B= \real^2$. In this case \( \phi=\alpha \vol \) and \( \alpha>0 \). It can be shown that
	\begin{equation} 
		\label{g vol asymp}
		g_{P}(x_0-\varepsilon u)
			=
		\min \{\len(F(P,u)), \len (F(P,-u))\} \eps + o(\eps),
			\quad \text{as} \ \eps \rightarrow +0,
	\end{equation}
	see \cite[proof of Lemma 3.1]{MR1909913}. Hence \( \min \{\len(F(P,u)), \len(F(P,-u))\} \) is determined by \( g_{P} \) and thus also by \( g_{P,\phi}= \alpha g_{P}. \) Now consider the remaining case, in which $B$ is an \( o \)-symmetric strip parallel to \( \cR u \). In this case
	\( \perim_B(\cdot) =\beta\wid(\cdot,u) \), for some known \( \beta \ge 0 \) (which is given by the knowledge of $B$).  Clearly, \( \gperim{P}(x_0-\varepsilon u) = \beta \eps\) for all sufficiently small $\eps>0$.
	Thus, taking into account \eqref{g vol asymp} we obtain
	\begin{equation*}
 		g_{P,\phi}(x_0-\varepsilon u)
			= 
		\Big(\beta + \alpha \min \{ \len(F(P,u)), \len(F(P,-u))\} \Big)\varepsilon + o(\varepsilon),
			\quad \text{as} \ \eps \rightarrow +0.
	\end{equation*}
	The strict monotonicity of \( \phi \) implies \( \alpha >0 \). Thus the  previous formula  determines
	\[
		\min\{\len(F(P,u)),\len(F(P,-u))\}
	\]
	and, as before, \( \{\len(F(P,u)), \len(F(P,-u))\} \).
\end{proof}

If both numbers in \( \{\len(F(P,u)), \len(F(P,-u))\} \) are strictly positive, then
\[
	\{\len(F(P,u)), \len(F(P,-u))\} = \{\ci(P,u), \ci(-P,u)\}.
\] 

\begin{claim}
	\label{determ vertex-edge claim}
	Assume that \( \len(F(P,u)) \) and \( \len(F(P,-u)) \) are not both zero.
	Then \( g_{P,\phi} \) determines \( \{ \ci(P,u),\ci(-P,u)\} \).
\end{claim}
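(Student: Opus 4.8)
The plan is to reduce the vertex-edge case to Claim~\ref{opposite lengths claim} together with an asymptotic analysis of $g_{P,\phi}$ near a boundary point of $DP=\supp\gphi{P}$. By Claim~\ref{opposite lengths claim} we already know the unordered pair $\{\len(F(P,u)),\len(F(P,-u))\}$, and by hypothesis not both entries vanish. If both are positive we are done by the remark preceding the claim, so we may assume exactly one of them, say $\len(F(P,u))>0$ and $\len(F(P,-u))=0$; then $F(P,-u)$ is a vertex $a$ of $P$ and $\ci(-P,u)=N(P,a)=-N(-P,a')$ where $a'$ is the corresponding vertex of $-P$. What remains is to recover $N(-P,u)$, i.e.\ the normal cone of $-P$ at its vertex with outer normal $u$, equivalently the two one-sided tangent directions of $-P$ there, since $\len(F(-P,u))=\len(F(P,-u))=0$.

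The key step is to examine the point $x_0 := $ midpoint of $F(DP,u)$ (which is an edge since $\len(F(DP,u))=\len(F(P,u))>0$ by \eqref{sum_lengths_opposite_edges}) and to move inward from $x_0$ in the direction $-u$. Near $x_0$, the self-intersection $P\cap(P+x)$ for $x=x_0-\eps u$ degenerates: one part of its boundary runs along the edge $F(P,u)$ (contributing a term linear in $\len(F(P,u))$), while the geometry near the translated vertex $a+x$ is governed precisely by the two tangent directions of $P$ at $a$, hence by $N(P,a)$, hence — using the already-known $\ci(-P,u)$ — this part is determined. The complementary piece of $\bd(P\cap(P+x))$ near the untranslated copy is controlled by the tangent cone of $P$ at the vertex with outer normal $-u$; but that vertex \emph{is} $a$ only after we know the normal cone of the body at its vertex on the \emph{other} side. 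The honest statement is: I would compute, as $\eps\to+0$,
\begin{equation*}
	\gphi{P}(x_0-\eps u) = c(u)\,\eps + o(\eps),
\end{equation*}
where the slope $c(u)$ is an explicit expression in $\len(F(P,u))$, the parameter $\alpha$, the set $B$, and the \emph{angles} of the normal cones of $P$ at the two vertices involved (one of which is already known from $\ci(-P,u)$). Solving for the unknown normal cone from $c(u)$ will recover it, provided the dependence of $c(u)$ on that cone is injective — this is where strict monotonicity of $\phi$ (equivalently $\alpha>0$ or $B$ strictly convex) enters, exactly as it did in the last part of Claim~\ref{opposite lengths claim}. A parallel computation approaching $x_0'$ = midpoint of $F(D(-P),u)$ (or, equivalently, replacing $u$ by $-u$ and using $F(DP,-u)$) pins down the normal cone of $P$ at the vertex with outer normal $u$ as well; combining, we obtain both $\ci(P,u)$ and $\ci(-P,u)$ as an unordered pair, which is all the claim asserts.

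The main obstacle I anticipate is the asymptotic computation of the slope $c(u)$ and, more delicately, verifying that $c(u)$ determines the unknown normal cone. The subtlety is that $P\cap(P+x_0-\eps u)$ is a polygon whose shape changes combinatorially as $\eps\to 0$; one must identify which edges and vertices survive in the limit, and express $\perim_B$ and $\vol$ of the thin sliver correctly to first order in $\eps$ using the local seminorm geometry of $B$ (for the $\perim_B$ term the relevant quantity is $\|{\cdot}\|_B$ evaluated on the tangent directions, for the $\vol$ term it is a determinant of tangent directions). One then needs the monotonicity of the one-variable function "slope as a function of cone angle" — which follows because enlarging the normal cone strictly enlarges the triangle cut off near the vertex, so strict monotonicity of $\phi$ forces a strict change in $c(u)$. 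I expect the bookkeeping to mirror closely \cite[proof of Lemma~3.1]{MR1909913}, with the new ingredient being the systematic replacement of Euclidean length by $\len_B$ and the use of Theorem~\ref{radial deriv of g perim} to handle the perimeter term's one-sided derivative cleanly.
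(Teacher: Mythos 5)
There is a genuine gap at the key step. In the vertex--edge case the intersection $P\cap(P+x_0-\eps u)$ is, for small $\eps$, a homothet $\eps T$ of a fixed triangle $T$ whose apex is the vertex $a$ with $F(P,-u)=\{a\}$, whose two sides through the apex have the directions of the two edges of $P$ at $a$, and whose opposite side lies on the line through $F(P,u)$. Hence the radial slope you propose to compute is $c(u)=\perim_B(T)$ (the volume term contributes only at order $\eps^2$, and $\len (F(P,u))$ does not enter the first-order term at all). This is a \emph{single} real number, while the unknown normal cone $N(P,a)$ depends on two parameters (the two one-sided tangent directions at $a$); by a dimension count the map from the cone to $c(u)$ cannot be injective, so the inversion you hope for fails --- the injectivity you flag as ``to be verified'' is exactly what breaks down. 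No second equation is available from your scheme: the slope is the same at every point of the relative interior of $F(DP,u)$, and replacing $u$ by $-u$ gives nothing new since $\gphi{P}$ is even. There is also a circularity in the sketch: you treat $\ci(-P,u)$ as ``already known'', but $\ci(-P,u)=-N(P,a)$ is precisely the quantity to be determined (what Claim~\ref{opposite lengths claim} provides is only the pair of lengths, which in this case just tells you that one side is a vertex). Finally, there is no ``second vertex'' governing the other part of the sliver: for small $\eps$ the upper boundary of $P\cap(P+x_0-\eps u)$ is a subsegment of the edge $F(P,u)$, not a neighbourhood of a vertex.

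The paper's argument is different and avoids asymptotics entirely. Writing $F(DP,u)=[x_1,x_2]$ and letting $y\in DP$ be the point with $x_i\in y+l_i$ (where $l_1,l_2$ are the directions of the two edges of $P$ at $a$), one observes that for $x$ on a line $m$ parallel to $[x_1,x_2]$ and meeting the interior of $\conv\{x_1,x_2,y\}$, the body $P\cap(P+x)$ changes only by a translation as long as $x$ stays in that triangle, so $\gphi{P}$ is constant there; for $x\in m$ outside the triangle the intersection is a proper subset of a translate of the former one, and \emph{strict} monotonicity of $\phi$ forces a strictly smaller value. Thus the set where $\gphi{P}$ restricted to $m$ attains its maximum is exactly $m\cap\conv\{x_1,x_2,y\}$; letting $m$ vary recovers the triangle and hence the directions $l_1,l_2$, i.e.\ $N(P,a)$. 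If you wish to keep a derivative-flavoured approach, you need genuinely two-dimensional information near $\bd(DP)$ (level sets or the full gradient), not the single radial slope at one boundary point.
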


\begin{proof}
	When both lengths are positive the assertion is a consequence of Claim~\ref{opposite lengths claim}. Assume that exactly one length vanishes.
	We may assume, up to a reflection, that \( F(P,u) \) is an edge and \( F(P,-u) \) is a vertex, say \( a \). Let the edges \( E_1 \) and \( E_2 \) of \( P \) containing \( a \) be contained in lines \(a+ l_1 \) and \(a+ l_2 \), and let \( F(DP,u)=[x_1,x_2] \).  Let the labeling and the point \( y\in DP \) be such that \( x_i\in y+l_i \), \( i=1,2 \). 
      Let \( m \) be a line parallel to \( [x_1,x_2] \) and intersecting the interior of the triangle  $\conv \{x_1,x_2,y\}$.  For all \( x\in m \) contained in the triangle  $\conv \{x_1,x_2,y\}$, \( g_{P,\phi} \) has the same value because $P\cap(P+x)$ changes only by a translation. For \( x\in m \) outside this triangle, \( g_{P,\phi} \) is less than this value, by the strict monotonicity of \( \phi \). Therefore the directions of the lines \( l_1 \) and \( l_2 \) can be determined.  This yields the outer normals of the edges \( E_1 \) and \( E_2 \) and hence the normal cone \( N(P,a) \).
\end{proof}

\begin{claim}
	\label{determ vertex-vertex claim}
	Assume \( \length (F(P,u))=\length (F(P,-u))=0 \).
	Then \( g_{P,\phi} \) determines \( \{\ci(P,u),\ci(-P,u)\} \).
\end{claim}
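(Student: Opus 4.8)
In this case both $F(P,u)$ and $F(P,-u)$ are vertices, call them $a$ and $b$, and the goal is to recover the pair of normal cones $\{N(P,a),N(P,-b)\}$ (equivalently $\{\ci(P,u),\ci(-P,u)\}$) from $\gphi{P}$. The plan is to exploit the vertex of $\supp\gphi{P}=DP$ at $w:=F(DP,u)=a-b$ together with the behaviour of $\gphi{P}$ near $w$. First I would observe that, since $F(P,u)=\{a\}$ and $F(P,-u)=\{b\}$, by \eqref{faces of diff body} the face $F(DP,u)$ is the single point $w=a-b$, so $w$ is a vertex of $DP$, and the two edges of $DP$ emanating from $w$ have the directions of the four edges of $P$ incident to $a$ and $b$; more precisely, the normal cone $N(DP,w)$ equals $N(P,a)\cap N(P,-b)$, and $N(P,a)\cup N(P,-b)$ is a neighbourhood of $u$ on $\ucircle$ whose two bounding normals are exactly the normals of the two edges of $DP$ at $w$. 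Thus from $DP$ alone one reads off two lines $\ell',\ell''$ through the origin such that $\{$one bounding ray of $N(P,a)$, one bounding ray of $N(P,-b)\}$ are the outer normals to the edges of $DP$ at $w$, and $N(P,a)\cup N(P,-b)$ is the closed angular sector between $\ell'$ and $\ell''$ containing $u$.

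What remains is to split this sector into the two cones $N(P,a)$ and $N(P,-b)$, i.e.\ to locate the common ray where $N(P,a)$ and $N(P,-b)$ meet. The key is the local picture near $w$: for $x$ near $w$ inside $DP$, the set $P\cap(P+x)$ is a small convex region, and as $x\to w$ its two ``ends'' degenerate, near $a$ to a corner with cone $N(P,a)$ and near $a$-translate of $b$ to a corner with cone $-N(P,-b)$. I would write, for a generic inward direction $v$ (so $w-\varepsilon v\in\intr DP$ for small $\varepsilon>0$), the asymptotic expansion of $\gphi{P}(w-\varepsilon v)$ as $\varepsilon\to+0$. Since $\gphi{P}(w)=0$ and $\gphi{P}$ is concave on $DP$ with a cone-like singularity at the vertex $w$, this expansion is linear: $\gphi{P}(w-\varepsilon v)=c(v)\,\varepsilon+o(\varepsilon)$, and $c(v)$ is the directional derivative of $\gphi{P}$ at $w$ in direction $-v$. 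Using the decomposition \eqref{decomposition_of_phi-cov} and the radial-derivative formulas — Theorem~\ref{radial deriv of g} for $g_P$ (which near a vertex of $DP$ gives the area of a degenerating inscribed parallelogram, hence $o(1)$, contributing nothing to the linear term unless $\alpha$ is paired with a genuinely linear piece) and Theorem~\ref{radial deriv of g perim} for $\gperim{P}$ — the coefficient $c(v)$ is expressible through the lengths $\len_B(\thecap(P,x))$ of the two little tangent caps $\thecap(P,x),\thecap(P,-x)$, which near $w$ are triangles built from the edge directions of $P$ at $a$ and at $b$. Working this out shows $c(v)$ depends on the geometry of the two cones $N(P,a)$ and $N(P,-b)$ in a way that changes (with a genuine kink, by the strict monotonicity of $\phi$, cf.\ the reasoning in Claim~\ref{determ vertex-edge claim}) precisely as $-v$ sweeps across the bisecting ray separating the two cones; concretely, for $x=w-\varepsilon v$ the inscribed parallelogram $\inspar(P,x)$ and the caps sit against the edges emanating from $a$ when $v$ points ``towards the $a$-side'' and against the edges from $b$ otherwise, and the direction at which this switch happens pins down the common ray of $N(P,a)$ and $N(P,-b)$.

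From the location of that common ray we recover the pair $\{N(P,a),N(P,-b)\}$ as the two subsectors of the sector between $\ell'$ and $\ell''$, i.e.\ $\{\ci(P,u),\ci(-P,u)\}$, with the labelling ambiguity (which cone is $N(P,a)$ and which is $N(P,-b)$) unresolved — which is exactly the unordered-pair conclusion the lemma asserts. The main obstacle I anticipate is making the asymptotic analysis near the vertex $w$ of $DP$ rigorous and uniform in $v$: one must verify that the $g_P$-contribution is genuinely $o(\varepsilon)$ (Theorem~\ref{radial deriv of g} gives a parallelogram whose area tends to $0$, but one needs the rate), that the cap-length term from Theorem~\ref{radial deriv of g perim} is continuous and piecewise-of-one-form in $v$ with a true corner at the switching direction, and that the degenerate sub-cases ($\|\cR u\|_B=0$, i.e.\ $B$ a strip or $\real^2$, and also the case where an edge of $DP$ at $w$ is parallel to $u$) are handled separately exactly as in Claims~\ref{opposite lengths claim} and~\ref{determ vertex-edge claim}, invoking strict monotonicity of $\phi$ to guarantee the relevant strict inequality and hence the detectable kink.
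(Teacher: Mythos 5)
Your plan diverges substantially from the paper's proof, and it contains genuine gaps. First, a structural error: with $F(P,u)=\{a\}$, $F(P,-u)=\{b\}$ and $w=a-b$, the two edges of $DP$ at the vertex $w$ have outer normals that bound the \emph{intersection} $N(DP,w)=N(P,a)\cap\bigl(-N(P,b)\bigr)$, not the union $N(P,a)\cup\bigl(-N(P,b)\bigr)$ as you assert; the union is not readable from $DP$ alone. Relatedly, the two cones $N(P,a)$ and $-N(P,b)$ overlap in the full two-dimensional cone $N(DP,w)$, so there is no ``common ray where the two cones meet'' and no ``bisecting ray separating the two cones'': the object you propose to locate by detecting a kink is not well defined. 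Even after repairing this (one would need to recover the two outer boundary rays and then decide which pairing of the four rays gives the correct unordered pair of cones), your central step is only asserted: that the directional derivative $c(v)$ of $\gphi{P}$ at the boundary vertex $w$ --- which to first order equals $\phi\bigl(C_a\cap(C_b-v)\bigr)$ for the two tangent cones --- determines the pair $\{N(P,a),-N(P,b)\}$. That is itself a nontrivial determination problem (recovering a pair of cones from $\phi$ of their translated intersections), and nothing in Theorems~\ref{radial deriv of g} and~\ref{radial deriv of g perim} (which are stated for $x\in\intr DK$, not at boundary points of $DK$) delivers it without a separate argument.

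The paper's proof avoids all of this by a short combinatorial reduction to Claim~\ref{determ vertex-edge claim}: for each vertex $a_i$ one looks for an auxiliary direction $w\in\ucircle$ with $F(P,w)=\{a_i\}$ and $F(P,-w)$ an edge, in which case Claim~\ref{determ vertex-edge claim} yields $N(P,a_i)$ up to reflection; if no such $w$ exists for $a_1$, then $N(P,a_1)\subset -N(P,a_2)$ and hence $N(P,a_1)=N(DP,a_1-a_2)$, which is known from $\supp\gphi{P}$, and symmetrically for $a_2$ (with the degenerate case $N(P,a_1)=-N(P,a_2)=N(DP,a_1-a_2)$ when neither auxiliary direction exists). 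If you want to keep your asymptotic approach you would have to prove the cone-determination statement above as a separate lemma; as written, the argument does not close.
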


\begin{proof}
	Let \( F(P,u) = \{a_1\} \) and \( F(P,-u) = \{a_2\} \).  Then $\{ \ci(P,u), \ci(-P,u)\} = \{N(P,a_1), - N(P,a_2)\}$. Thus, we need to determine the set of the two cones $N(P,a_1)$ and $-N(P,a_2)$. We can argue exactly as in \cite[Case~2 of Lemma~3.1]{MR1909913} and in order to keep the presentation self-contained we repeat the argument.
	Let $i \in \{1,2\}$. If there exists $w \in \ucircle$ such that $F(P,w)=\{a_i\}$ and $F(P,-w)$ is an edge, then by Claim~\ref{determ vertex-edge claim} the cone $N(P,a_i)$ is determined by $g_{P,\phi}$, up to reflection in $o$.
	If by Claim~\ref{determ vertex-edge claim} both $N(P,a_1)$ and $-N(P,a_2)$ are determined using an appropriate direction $w \in \ucircle$ as above, the assertion follows.
	If precisely one of the two cones has been determined using $w \in \ucircle$, say the cone $-N(P,a_2)$, then for the other cone $N(P,a_1)$ one has the inclusion $N(P,a_1) \subset - N(P,a_2)$. Taking into account the known equality $N(DP,a_1-a_2) = N(P,a_1) \cap (-N(P,a_2))$, we obtain $N(DP,a_1-a_2) = N(P,a_1)$, which shows that also the cone $N(P,a_1)$ is determined. In the case that neither $N(P,a_1)$ nor $-N(P,a_2)$ can be determined using a direction $w \in \ucircle$ as above, we have $N(P,a_1) = -N(P,a_2)$ and, thus, both $N(P,a_1)$ and $-N(P,a_2)$ coincide with $N(DP,a_1-a_2)$. It follows that also in this case $N(P,a_1)$ and $-N(P,a_2)$ are determined by $g_{P,\phi}$.
\end{proof}
The proof of Lemma~\ref{curv info lem} is concluded.
\end{proof}

\begin{lemma}
	\label{symmetric arcs lem}
	Let \( \phi\in\Phi^2 \setminus \{0\} \) be strictly monotone, and let \( P \) and \( Q \) be convex polygons  with \( g_{P,\phi}=g_{Q,\phi} \) and such that \( P \) is not a reflection or a translation of $Q$. Let $A^+$ and $A^-$ be maximal arcs contained in $\bd P\cap\bd Q$ and assume that neither \( A^+ \) nor \( A^- \) are points. Assume also the existence of $u_0\in\ucircle$ such that $F(P,u_0)$ and $F(P,-u_0)$ are vertices of $P$ and
	\[
		F(P,u_0)\subset \relint A^+,\quad F(P,-u_0)\subset \relint A^-.
	\]
Then \( A^+ \) is a reflection of \( A^- \).
\end{lemma}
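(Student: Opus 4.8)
The plan is to mimic the proof of the analogous statement for the volume--covariogram in \cite{MR1909913}, with $\vol$ replaced by the strictly monotone $\phi$ and with Lemma~\ref{curv info lem} playing the role of the ``curvature information'' extracted there from $g_P$; strict monotonicity of $\phi$ will be used where, for $\phi=\vol$, a direct area estimate is invoked. Put $a_1:=F(P,u_0)$ and $a_2:=F(P,-u_0)$; by hypothesis these are vertices of $P$ contained in $\relint A^+$ and $\relint A^-$ respectively. Since $a_i$ is a vertex of $P$ and a neighbourhood of $a_i$ in $\bd P$ is contained in the common arc $A^{\pm}\subset\bd P\cap\bd Q$, the points $a_1,a_2$ are common vertices of $P$ and $Q$ with $N(P,a_i)=N(Q,a_i)$, and by the choice of $u_0$ one has $N(P,a_2)=-N(P,a_1)$. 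Consequently the only point reflection that could carry $A^+$ onto $A^-$ is the reflection $\sigma$ in $m:=\tfrac12(a_1+a_2)$: such a reflection must send $a_1$ to the unique vertex of $A^-$ whose normal cone is $-N(P,a_1)$, namely $a_2$, and a point reflection is determined by one such pair. Thus the lemma is equivalent to the equality $\sigma(A^+)=A^-$; and since $a_2\in\relint A^-$ while $A^-$ is a maximal arc of $\bd P\cap\bd Q$, it is enough to show $\sigma(A^+)\subset\bd P\cap\bd Q$.

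First I would note that $\sigma$ really does carry $\bd P$ to $\bd P$ on a neighbourhood of $a_1$: the two edges of $P$ at $a_1$ and the two edges of $P$ at $a_2$ are pairwise parallel (their normal cones being opposite) and pass through the corresponding points $a_1$ and $a_2=\sigma(a_1)$, so a sufficiently short piece of $\bd P$ around $a_1$ is mapped by $\sigma$ onto a short piece of $\bd P$ around $a_2$, which lies in $A^-\subset\bd P\cap\bd Q$. Now suppose, for a contradiction, that $\sigma(A^+)\not\subset\bd P\cap\bd Q$. Following $A^+$ outward from $a_1$ and keeping track of how far $\sigma$ continues to map points of $A^+$ into $\bd P\cap\bd Q$, one reaches a first such point $c\in\relint A^+$ at which this stops (in at least one of the two directions from $a_1$); using that a point reflection reverses the orientation of the boundary, one checks that $d:=\sigma(c)$ is then an endpoint of the maximal arc $A^-$. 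This yields the following configuration: $\bd P$ and $\bd Q$ coincide near $c$ and on the $A^-$--side of $d$, they differ on the other side of $d$, and the continuation of $\bd P$ past $c$ (on the side away from $a_1$) is not the $\sigma$--image of the continuation of $\bd P$ past $d$ (on the side away from $A^-$).

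To derive the contradiction I would localize this discrepancy at $d$ by a careful choice of a point $x^*\in\intr DP=\intr DQ$ close to $F(DP,w)=F(DQ,w)$ for a suitable normal direction $w\in\ucircle$, arranging that the parts of $\bd P$ and $\bd Q$ that agree (near $c$ and on the $A^-$--side of $d$) produce a common convex piece $C$ of both $P\cap(P+x^*)$ and $Q\cap(Q+x^*)$, while the split of $\bd P$ from $\bd Q$ just past $d$ contributes to $P\cap(P+x^*)$ and to $Q\cap(Q+x^*)$ two triangular ``caps'' $T_P=\conv\{c_1,c_2,c_3\}$ and $T_Q=\conv\{d_1,d_2,d_3\}$ (cf.\ Fig.~\ref{fig:arcs}) respectively, so that $P\cap(P+x^*)=C\cup T_P$ and $Q\cap(Q+x^*)=C\cup T_Q$. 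Here the information needed to pick $w$ and $x^*$ and to know which of the relevant faces of $P$ and $Q$ are edges and which are vertices (hence that the caps are genuine triangles and that $C$ is truly common) comes from $DP=DQ$, relation \eqref{faces of diff body}, and Lemma~\ref{curv info lem}, while the geometric meaning of the one-sided radial derivative of $\gperim{P}$ in Theorem~\ref{radial deriv of g perim} (and of the radial derivative of $g_P$ in Theorem~\ref{radial deriv of g} for the volume term of $\phi$) is what makes the shapes of these intersections transparent. Because $\bd P$ and $\bd Q$ split at $d$ while issuing from the same edge(s) at the common ``apex'' of $T_P$ and $T_Q$, one of the two triangles is contained in a translate of the other with \emph{strict} inclusion. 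On the other hand $g_{P,\phi}(x^*)=g_{Q,\phi}(x^*)$, and subtracting off the common part $C$ by means of the valuation property \eqref{phi:is:val}, gives $\phi(T_P)=\phi(T_Q)$, contradicting the strict monotonicity of $\phi$. Hence $\sigma(A^+)\subset\bd P\cap\bd Q$, so $\sigma(A^+)\subset A^-$; running the same argument with $A^+$ and $A^-$ interchanged gives $\sigma(A^-)\subset A^+$, and therefore $\sigma(A^+)=A^-$, i.e.\ $A^+$ is a reflection of $A^-$.

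I expect the main obstacle to be the third step: arranging $w$ and $x^*$ so that $P\cap(P+x^*)$ and $Q\cap(Q+x^*)$ really differ precisely by the two triangles $T_P$, $T_Q$, with the strict inclusion (up to translation) between them. This requires a combinatorial case distinction according to whether the faces $F(P,w),F(P,-w)$ and their $Q$--counterparts are edges or vertices, according to which of the two ``matchings'' in Lemma~\ref{curv info lem} occurs near the divergence, and according to which side of $d$ (and of $c$) the two boundaries still coincide. It is exactly the technical heart of the argument in \cite{MR1909913}, here carried out with the general strictly monotone $\phi$ in place of the volume, strict monotonicity of $\phi$ replacing the volume comparison at the end.
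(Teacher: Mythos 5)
Your overall strategy (identify the candidate reflection, propagate the symmetry along the arcs, and derive a contradiction from strict monotonicity at the first point of divergence) is in the right spirit, but two of its load-bearing steps do not hold as stated. First, the claim that ``by the choice of $u_0$ one has $N(P,a_2)=-N(P,a_1)$'' is unjustified: the hypothesis only gives $u_0\in\intr N(P,a_1)$ and $-u_0\in\intr N(P,a_2)$, i.e.\ that the two open cones $\intr N(P,a_1)$ and $-\intr N(P,a_2)$ \emph{intersect}, not that the cones are equal. The equality $N(P,a_1)=-N(P,a_2)$ is precisely a local form of the conclusion of the lemma (indeed, in the proof of Theorem~\ref{retrieval of polygons thm} it is \emph{deduced} from this lemma), so assuming it makes the base case of your continuation argument circular. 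The paper instead proves a parallelism statement only at the \emph{endpoints} of the arcs ($u_1^+=-u_1^-$, $u_2^+=-u_2^-$), by a careful argument combining $F(P,u)=F(Q,u)\subset\relint A^+$, relation \eqref{faces of diff body}, and Lemma~\ref{curv info lem}; nothing of the sort is available for the interior vertices $a_1,a_2$ at this stage.

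Second, the contradiction mechanism you propose --- choosing $x^*$ so that $P\cap(P+x^*)=C\cup T_P$ and $Q\cap(Q+x^*)=C\cup T_Q$ with a \emph{common} convex piece $C$, and then cancelling $C$ by the valuation property to get $\phi(T_P)=\phi(T_Q)$ --- cannot in general be arranged. The boundary of $P\cap(P+x^*)$ consists of a long arc of $\bd P$ together with a long arc of $\bd P+x^*$ (for $x^*$ near $\bd DP$ these arcs each run along roughly half of the boundary), and these coincide with the corresponding arcs of $\bd Q$ only on $A^+\cup A^-$; outside the maximal common arcs the boundaries may differ arbitrarily, so there is no common $C$. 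This is exactly why the paper works with the one-sided \emph{radial derivatives} instead of the function values: by Theorems~\ref{radial deriv of g} and~\ref{radial deriv of g perim} these depend only on the local caps at the four vertices of the inscribed parallelogram, so it suffices to produce translated chords $[c_1,c_2]\subset P$ and $[d_1,d_2]\subset Q$ at the divergence point together with a single \emph{common} chord $F_{PQ}$ of the same length $r$ on the opposite side --- whose existence is guaranteed by the intermediate value $r^-<r<r^+$ in the paper's chord-length comparison at the arc endpoints. Strict monotonicity then yields a strict inequality between the derivatives of $g_{P,\phi}$ and $g_{Q,\phi}$, contradicting their equality. Finally, note that even after the chord comparison one only obtains that \emph{one} of the arcs contains a reflection of the other; upgrading this containment to equality requires the separate parallelism argument with the second pair of endpoints given at the end of the paper's proof, and your ``run the same argument with $A^+$ and $A^-$ interchanged'' does not substitute for it.
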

\begin{proof}Since \( P\neq Q \) neither \( A^+ \) nor \( A^- \) coincide with \( \bd P \). Let $a_1^+$ and $a_2^+$ denote, respectively, the left and right endpoint of $A^+$. Let $a_1^-$ and $a_2^-$ be defined similarly for $A^-$. For $i=1,2$, let $u_i^+$ be the unit outer normal to $P$ and $Q$ at the segment of $A^+$ containing $a_i^+$ and let $u_i^-$ be the unit outer normal to $P$ and $Q$ at the segment of $A^-$ containing $a_i^-$. We remark that $u_1^+\neq u_2^+$ and $u_1^-\neq u_2^-$, because both $\relint A^+$ and $\relint A^-$ contains a vertex, by assumption. Clearly $[u_1^+,u_2^+]_{\ucircle}$ is the set of unit outer normals to $P$ and $Q$ at  points in $\relint A^+$.
 
We claim that, for each $i=1,2$, the segment in $A^+$ containing $a_i^+$ is parallel to the segment in $A^-$ containing $a_i^-$, that is
\begin{equation}\label{lem_curv_info_aa}
 u_1^+=-u_1^- \quad \text{and}\quad u_2^+=-u_2^-.
\end{equation}
Let $u\in(u_1^+,u_2^+)_{\ucircle}$. We have
\begin{equation}\label{lem_curv_info_a}
 F(P,u)=F(Q,u)\subset\relint A^+.
\end{equation}
This  and~\eqref{faces of diff body} imply $F(P,-u)=F(Q,-u)$. This identity together with the fact that
\( \bigcup_{v\in (u_1^+,u_2^+)_{\ucircle}}F(P,-v) \)
is an arc (possibly, degenerate to a point) contained in $\bd P\cap \bd Q$ and intersecting $A^-$, imply
\begin{equation}\label{lem_curv_info_b}
F(P,-u)=F(Q,-u)\subset A^-.
\end{equation}
Formula~\eqref{lem_curv_info_a} implies $\ci(P,u)=\ci(Q,u)$ and, as a consequence of Lemma~\ref{curv info lem},
\[
 \ci(P,-u)=\ci(Q,-u).
\]
This and~\eqref{lem_curv_info_b} imply $F(P,-u)=F(Q,-u)\subset \relint A^-$. This implies $-u\in[u_1^-,u_2^-]_{\ucircle}$ and, for the arbitrariness of $u$,
\(
 -(u_1^+,u_2^+)_{\ucircle}\subset[u_1^-,u_2^-]_{\ucircle}.
\)
The analogous inclusion with the roles of $A^+$ and $A^-$ exchanged can be proved in a similar way. This concludes the proof of~\eqref{lem_curv_info_aa}.

Let \( u\in \ucircle \) be such that
\[
	(l_{u}+a_1^-) \cap\relint A^+ \neq\emptyset 
		\qquad \text{and} \qquad 
	(l_u+a_1^+) \cap\relint  A^- \neq\emptyset.
\]
Let \( r^-=\length(P\cap (l_u+a_1^-)) \) and \( r^+=\length(P \cap (l_u+a_1^+)). \)
We shall prove that \( r^- = r^+. \) 
Suppose that \( r^- \neq r^+, \) i.e., without loss of generality, that 
\[
r^-<r^+.
\] 

Let \( \{b\}=(l_u+a_1^+)\cap A^- \).
The boundaries of \( P \) and \( Q \) coincide in a neighborhood of \( b \). 
Let \( E_{PQ} \) be a segment with an endpoint in \( b \), contained in \( \bd P\cap\bd Q \) and outside the strip bounded by \( l_{u}+a_1^- \) and \( l_u+a_1^+ \).
The boundaries of \( P \) and \( Q \) differ in every neighborhood of \( a_1^+ \).
Let \( E_P \) and \( E_Q \) be segments with an endpoint in \( a_1^+ \), outside the strip bounded by \( l_{u}+a_1^- \) and \( l_u+a_1^+ \), and contained in \( \bd P \) and in \( \bd Q \), respectively.
Up to exchanging \( P \) and \( Q \) and reducing the lengths of \( E_P \) and \( E_Q \), we may assume that \( E_P\subset Q \), that is, all points of $P$ sufficiently close to \( a_1^+ \) belong to \( Q \).
 
Consider a chord \( [c_1,c_2] \) of \( P \), parallel to \( u \) with \( c_1\in E_{PQ} \) and \( c_2\in E_P \), and close enough to \( l_u+a_1^+ \) to ensure that \( r=\length([c_1,c_2])>r^- \).

\begin{figure}
	\begin{center}
		\input{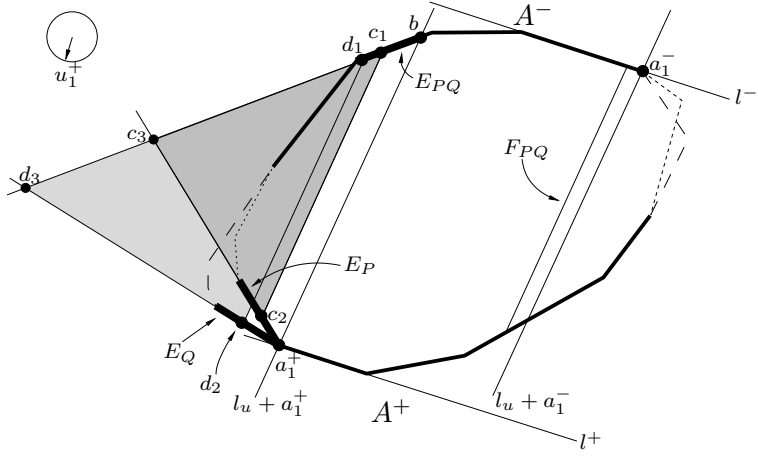}
	\end{center}
	\caption{
		The arcs \( A^+ \) and \( A^- \),
		the segments \( E_{PQ} \), \( E_P \), \( E_Q \) (thick segments) and \( F_{PQ} \), 
		the triangles \( \conv \{c_1,c_2,c_3\} \) and \( \conv \{d_1,d_2,d_3\} \) (in gray) 
		and the vector \( u_1^+ \).
	}
	\label{fig:arcs}
\end{figure}

By~\eqref{lem_curv_info_aa}, there is a line \( l^+ \) (and a line \( l^- \)) orthogonal to \( u_1^+ \) and supporting both \( P \) and \( Q \) at \( a_1^+ \) (at \( a_1^- \), respectively).
Let \( m \) be a supporting line to \( P \) at \( b \) and note that \( [c_1,c_2] \) lies between \( l^+ \) and \( m \), which are either parallel or meet in the half-plane bounded by \( l_u+a_1^+ \) not containing \( a_1^- \).
Since \( [c_1,c_2] \) is parallel to \( P\cap (l_{u}+a_1^+) \), we have \( r\le r^+ \), with equality if and only if \( c_2\in l^+ \), \( E_P\subset l^+ \) and \( c_1, b\in l^-=m \).
When equality holds, since \( l^+ \) supports \( Q \) too, the inclusion \( E_P\subset l^+ \) and the assumption \( E_P\subset Q \) imply \( E_Q\subset l^+ \),  which contradicts  the assumption \( A^+ \) maximal.
Therefore \( r<r^+ \).

Let us prove that \( E_{PQ} \) is not parallel to \( E_Q \).
If they are parallel, then, arguing as above, we have that \( E_{PQ}\subset l^-=m \) and \( E_Q\subset l^+ \).
Thus \( Q \) has two edges orthogonal to \( u_1^+ \).
By Lemma~\ref{curv info lem} the same happens for \( P \). We have \({F(P,u_1^+)}, {F(Q,u_1^+)}\subset l^+ \) 
and \( {F(P,-u_1^+)}, {F(Q,-u_1^+)}\subset l^- \).
The segment \( E_P \) is not contained in \( l^+ \), because this contradicts  the assumption \( A^+ \) maximal. Thus \( \len(F(Q,u_1^+))>\len(F(P,u_1^+)) \). Thus Lemma~\ref{curv info lem} implies
\[
\len(F(P,u_1^+))=\len(F(Q,-u_1^+))\text{ and } \len(F(P,-u_1^+))=\len(F(Q,u_1^+)).
\]
Since both $F(P,-u_1^+)$ and $F(Q,-u_1^+)$ contain $[a_1^-,b]$, then $F(P,u_1^+)$ and $F(Q,u_1^+)$ contain  a segment of length $\len ([a_1^-,b])$. This implies that  $l^+\cap (l_u+a_1^-)\in A^+$ and contradicts $r^-<r^+$.
This concludes the proof that \( E_{PQ} \) is not parallel to \( E_Q \).

If $[c_1,c_2]$ is sufficiently close to $l_u+a_1^+$, then  there is a chord \( [d_1,d_2] \) of \( Q \) which is  a translation of  $[c_1,c_2]$ and such that \( d_1\in E_{PQ} \) and \( d_2\in E_Q \) (see Figure~\ref{fig:arcs}).
Since \( r^-<r<r^+ \), there is a common chord \( F_{PQ} \) of \( P \) and \( Q \) of length \( r \), parallel to \( u \), contained in the strip bounded by \( l_u+a_1^+ \) and \( l_u+a_1^- \), and with endpoints on the arcs \( A^+ \) and \( A^- \). Let \( c_3=\aff(E_{PQ})\cap\aff(E_P) \) and \( d_3=\aff(E_{PQ})\cap\aff(E_Q) \).

Let \( x=c_1-c_2=d_1-d_2 \). In view of Theorem~\ref{radial deriv of g}, we have
\[
	-\left. \pderiv{t} g_{P}(t x)\right|_{t=1} 
		< 
	- \left. \pderiv{t} g_{Q}(t x)\right|_{t=1},
\]
since $\inspar(P,x)=\conv([c_1,c_2]\cup F_{PQ})$, $\inspar(Q,x)=\conv([d_1,d_2]\cup F_{PQ})$ and by this $\vol(\inspar(P,x))<\vol(\inspar(Q,x))$. Note that $\vol(\inspar(P,x))<\vol(\inspar(Q,x))$ holds because the line \( \aff [c_1,c_2] \) is closer to \( \aff F_{PQ} \) than the line \( \aff [d_1,d_2]. \) Furthermore, by Theorem~\ref{radial deriv of g perim} we have
\[
	\LeftPartialDerivAt{
		\Bigl(\gperim{Q}(t x)  - \gperim{P}(t x)  \Bigr)
	}{t}{1} 
		=
	\perim_B(\conv\{c_1,c_2,c_3\})-\perim_B(\conv\{d_1,d_2,d_3\}).
\]
By construction, the triangle \( \conv\{c_1,c_2,c_3\} \) is strictly contained in the translation of the triangle \(\conv\{d_1,d_2,d_3\}\) by vector \( c_2-d_2 \). Consequently
\[
	-\left. \pderivleft{t} \gperim{P}(t x) \right|_{t=1}
		\le 
	- \left. \pderivleft{t} \gperim{Q}(t x) \right|_{t=1},
\]
and the latter inequality is strict unless $\perim_B$ is not strictly monotone. By assumption, \( \phi = \alpha \vol + \perim_B \) is strictly monotone, and thus either $\perim_B$ is strictly monotone or \( \alpha > 0 \).  In both cases we arrive at the strict inequality
 \begin{equation}\label{contradiction}
-\left. \pderivleft{t} g_{P,\phi}(t x) \right|_{t=1} < \left. - \pderivleft{t} g_{Q,\phi}(t x) \right|_{t=1}.
\end{equation}
Inequality \eqref{contradiction} contradicts \( g_{P,\phi} = g_{Q,\phi} \).

It follows that \( r^-=r^+ \).
Therefore  \( (l_{u}+a_1^+)\cap A^- \) and \( (l_{u}+a_1^-)\cap A^+ \) are symmetric with respect to \( (a_1^++a_1^-)/2 \). Since we may repeat the above argument for every \( u \) such that \( l_{u}+a_1^- \) intersects \( \relint A^+ \) and \( l_u+a_1^+ \) intersects  \(  \relint A^- \) we have that either \( A^+ \) contains the reflection of \( A^- \) with respect to \( (a_1^++a_1^-)/2 \), or the same holds with the role of \( A^+ \) and \( A^- \) exchanged. 

Without loss of generality, assume that the reflection of $A^-$ with respect to $(a_1^+ + a_1^-)/2$ is a subset of $A^+$, that is $A_1^- := a_1^+ + a_1^- - A^- \subseteq A^+$. To conclude the proof, it remains to show the equality $A_1^- = A^+$. We argue by contradiction. Assume $A_1^-$ is a proper subset of $A^+$. Then $\len(A^-) < \len(A^+)$ and $A^-_1$ has two endpoints, one coinciding with the endpoint $a_1^+$ of $A^+$ and the other one $f_1:=a_1^+ + a_1^- - a_2^-$ lying in $\relint(A^+)$. Repeating the previous arguments with respect to points $a_2^+, a_2^-$ in place of $a_1^+, a_1^-$, we see that either the reflection of $A^-$ with respect to $(a_2^+ + a_2^-)/2$ is a subset of $A^+$ or the reflection of $A^+$ with respect to $(a_2^+ + a_2^-)/2$ is a subset of $A^-$. Since $\len(A^-) < \len(A^+)$, the former is the case, that is $A_2^- := a_2^+ + a_2^- - A^- \subseteq A^+$. The arc $A_2^-$ has two endpoints, one coinciding with the endpoint $a_2^+$ of $A^+$ and the other one $f_2 := a_2^+ + a_2^- - a_1^-$ lying in $A^+$. Since $A_1^-$ and $A_2^+$ coincide up to translations, the segments $[a_1^+, f_1]$ and $[a_2^+, f_2]$ joining the endpoints of $A_1^-$ and $A_2^-$, respectively, are parallel. Since $A^+$ is a convex arc which is not a segment and since $f_1 \in \relint A^+$, we conclude that no segment joining $a_2^+$ with a point of $A^+$ is parallel to $[a_1,f_1]$. Thus, $[a_1^+, f_1]$ and $[a_2^+, f_2]$ are not parallel, which is a contradiction. 
\end{proof}

\begin{proof}[Proof of Theorem~\ref{retrieval of polygons thm}]
This proof  coincides with the proof of \cite[Theorem~1.1]{MR1909913}, up to replacing references to Lemmas~3.1 and 4.1 in \cite{MR1909913}
with references to their analogs in this paper, i.e., to Lemmas~\ref{curv info lem} and \ref{symmetric arcs lem}, respectively.  We repeat here the proof for completeness.

Let $P$ be a planar convex
polygon and let $Q$ be a planar convex body with $\gphi{P}=\gphi{Q}$ and
$P\neq Q+\tau$, $P\neq -Q+\tau$ for each $\tau\in\real^2$.  Since \( DP=DQ=\supp \gphi{P} \) (by Lemma~\ref{g phi thm}~\eqref{g phi thm_a})
and $P$ is a polygon, $DQ$ and hence $Q$ must also be polygons.
We shall prove that both $P$ and $Q$ are centrally symmetric. Once that this is proved Theorem~\ref{retrieval centr sym thm} implies that $P=Q$, up to
translation, a contradiction.

To prove the central symmetry of $P$ and $Q$, let $a$ and $b$ be
 {\em opposite}
vertices of $P$, that is,
$$
\intr N(P,a)\cap\left(-\intr N(P,b)\right)\neq\emptyset.
$$
By Lemma~\ref{curv info lem} and $DP=DQ$
we may assume, after a translation and reflection of $Q$, if necessary,
that
$a$ and $b$ are also vertices of $Q$, and moreover $N(P,a)=N(Q,a)$ and
$N(P,b)=N(Q,b)$. We apply Lemma~\ref{symmetric arcs lem} with $A^+$ (and $A^-$) the maximal arc in $\bd P\cap \bd Q$ containing $a$ (containing $b$, respectively) and $u_0\in \intr N(P,a)\cap-\intr N(P,b)\cap\ucircle$.  The arcs $A^+$ and $A^-$ are not degenerate because when two
polygons have a vertex and the normal cone at that vertex in common, then
their boundaries must be equal in a neighborhood of that vertex.
Lemma~\ref{symmetric arcs lem} implies that $A^+$ is a reflection of $A^-$.  This yields
\begin{equation}\label{retrieval of polygons thm_a}
 N(P,a)=N(Q,a)=-N(P,b)=-N(Q,b).
\end{equation}
The validity of \eqref{retrieval of polygons thm_a} for all pairs of opposite vertices implies that all edges of $P$ come in parallel pairs and  that the same happens for $Q$. Let $[a_1,a_2]$ and $[b_1,b_2]$ be an arbitrary pair of parallel edges of $P$. It now suffices to show that these edges have the same length.  Let $a_1$, $a_2$, $b_1$, and $b_2$ be in
counterclockwise order in $\bd P$.
By Lemma~\ref{curv info lem} and $DP=DQ$, after possibly a translation and a reflection of $Q$,
$[a_1, a_2]$ and $[b_1,b_2]$ are also edges of $Q$ and thus $a_1,a_2,b_1$ and $b_2$ are also vertices
of $Q$. Keeping $Q$ henceforth fixed in this position it is clear that
both $a_1$, $b_1$ and $a_2$, $b_2$ are pairs of opposite vertices
(in the sense of the previous paragraph)
 of $P$ as well as of $Q$. This yields
 $N(P,a_1)=-N(P,b_1)=N(Q,a_1)=-N(Q,b_1)$ and
 $N(P,a_2)=-N(P,b_2)=N(Q,a_2)=-N(Q,b_2)$. Consequently  the boundaries
 of $P$ and $Q$ coincide also in a neighborhood of $[a_1, a_2]$ and
$[b_1,b_2]$. Then Lemma~\ref{symmetric arcs lem} shows that $[a_1, a_2]$ must be a reflection of $[b_1,b_2]$ and
so they have the same length.  This proves that both $P$ and $Q$ are
centrally symmetric.
\end{proof}

\subsection{
  Determination of polygons from the width-covariogram (Theorem~\ref{retrieval for width-covar thm}).
}

\label{section width-covariogram} 

In this section we assume \( \phi(K)=\wid(K,z) \),  for every convex body \( K \) and for some given fixed \( z\in \ucircle \). Moreover we use the symbol \( g_{K,w} \) for \( \gphi{K} \).

The width-covariogram has a simple expression in certain subsets of its support, and this expression  identifies these subsets.
Let us define the \emph{core} of \( K\in\cK^n_0 \) as
\begin{equation*} 
	\core K : = \left(F(K,z)-K\right)\cap\left(K-F(K,-z)\right).
\end{equation*}
See Fig.~\ref{fig:core}. Clearly $\core K$ depends on the choice of $z$. The next lemma implies that width-covariogram of \( K \) determines its core.

\begin{lemma}\label{lem:width_core} 
 Let $K\in\cK^n_0$ and \( x\in DK \). We have 
\begin{equation}\label{expression_wcov_in_core}
  \wcov{K}(x)=\wcov{K}(o)-\sprod{x}{z}
\end{equation}
if and only if \( x\in \core K \).
\end{lemma}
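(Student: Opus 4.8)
The plan is to unwind both sides of \eqref{expression_wcov_in_core} into purely geometric statements about supporting slabs and chords in the direction $z$. Write $M := K \cap (K+x)$. Then $\wcov{K}(x) = \wid(M,z) = h(M,z) + h(M,-z)$, and since $\wcov{K}(o) = \wid(K,z) = h(K,z) + h(K,-z)$ while $\sprod{x}{z} = h(K+x,z) - h(K,z) = h(K,-z) - h(K+x,-z)$, the equality \eqref{expression_wcov_in_core} is equivalent to $h(M,z) + h(M,-z) = h(K+x,z) + h(K,-z)$, i.e. to the two simultaneous equalities $h(M,z) = h(K+x,z)$ and $h(M,-z) = h(K,-z)$. (One inclusion, namely $M \subseteq K \cap (K+x)$, forces $h(M,z) \le \min\{h(K,z),h(K+x,z)\} = h(K+x,z)$ and $h(M,-z) \le h(K,-z)$; hence their sum is always $\le \wcov{K}(o) - \sprod{x}{z}$, and equality in the sum is equivalent to equality in each term.) Here I am using that $\sprod{x}{z} \ge 0$ can be assumed after possibly replacing $x$ by $-x$, which is legitimate because both $\core K$ and the condition \eqref{expression_wcov_in_core} behave symmetrically; alternatively one treats the sign of $\sprod{x}{z}$ directly, noting $h(K+x,z) = h(K,z) + \sprod{x}{z}$ in all cases.

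Next I would translate ``$h(M,z) = h(K+x,z)$'' into an incidence condition. This equality says that the supporting hyperplane of $K+x$ in direction $z$ actually touches $M = K \cap (K+x)$, i.e. $F(K+x,z) \cap K \ne \emptyset$, equivalently $(F(K,z)+x) \cap K \ne \emptyset$, equivalently $x \in K - F(K,z)$, i.e. $x \in -(F(K,z) - K) = F(K,z) - K$ after using $-(A-B) = B-A$; concretely, there is a point of $K$ lying on the face of $K+x$ exposed by $z$, which is exactly the statement $x \in F(K,z) - K$ (note $F(K,z) - K$ is $o$-symmetric is false, but $x \in F(K,z)-K \iff F(K,z) \cap (K + x) \ne \emptyset$ after the sign bookkeeping, which I will carry out carefully). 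Symmetrically, ``$h(M,-z) = h(K,-z)$'' says $F(K,-z) \cap (K+x) \ne \emptyset$, i.e. $(F(K,-z) - x) \cap K \ne \emptyset$, i.e. $x \in K - F(K,-z)$. The conjunction of these two membership conditions is precisely $x \in \left(F(K,z) - K\right) \cap \left(K - F(K,-z)\right) = \core K$, matching the definition. So the proof reduces to the elementary but slightly fiddly sign-tracking that converts each one-sided support-function equality into the corresponding Minkowski-difference membership.

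I would organize the writeup as: (1) record $\wcov{K}(x) = h(M,z) + h(M,-z)$ and $\wcov{K}(o) - \sprod{x}{z} = h(K+x,z) + h(K,-z)$ using $h(K+x,\cdot) = h(K,\cdot) + \sprod{x}{\cdot}$; (2) observe the always-valid inequalities $h(M,z) \le h(K+x,z)$, $h(M,-z) \le h(K,-z)$ (each from $M \subseteq K+x$ and $M \subseteq K$ respectively), so that \eqref{expression_wcov_in_core} holds iff both are equalities; (3) show $h(M,z) = h(K+x,z) \iff F(K+x,z) \cap K \ne \emptyset \iff x \in F(K,z) - K$, and analogously $h(M,-z) = h(K,-z) \iff x \in K - F(K,-z)$; (4) conjoin. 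The main obstacle is purely one of care rather than depth: making sure the translation identities $F(K+x,z) = F(K,z)+x$ and the set-arithmetic identities $(A+x) \cap B \ne \emptyset \iff x \in B - A$ are applied with the correct signs, and handling the (harmless) reduction so that $\sprod{x}{z}$ appears with the right sign in \eqref{expression_wcov_in_core}. There is no real analytic difficulty, since everything is linear-support-function bookkeeping and no smoothness or curvature input is needed.
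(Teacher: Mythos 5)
Your overall plan is the paper's argument in support-function language: reduce \eqref{expression_wcov_in_core} to the two incidence conditions $F(K,z)\cap(K+x)\neq\emptyset$ and $(F(K,-z)+x)\cap K\neq\emptyset$, which are exactly membership in the two sets defining $\core K$. However, the concrete decomposition you commit to is off by a sign, and steps (2)--(3) fail as written. With $M:=K\cap(K+x)$ and $\sprod{x}{z}\ge 0$ (the correct reduction to this case is not a symmetry of the statement under $x\mapsto -x$ --- neither $\core K$ nor \eqref{expression_wcov_in_core} is invariant under it --- but the observation that for $\sprod{x}{z}<0$ both sides of the equivalence are false, since then the right-hand side of \eqref{expression_wcov_in_core} exceeds $\max\wcov{K}=\wcov{K}(o)$ while $\core K\subset\{\sprod{\dotvar}{z}\ge0\}$), one has $h(K+x,z)=h(K,z)+\sprod{x}{z}\ge h(K,z)$, so $\min\{h(K,z),h(K+x,z)\}=h(K,z)$, not $h(K+x,z)$ as you assert. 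The correct bound is $\wcov{K}(x)=h(M,z)+h(M,-z)\le h(K,z)+h(K+x,-z)=\wcov{K}(o)-\sprod{x}{z}$, whereas the pair you single out, $h(M,z)\le h(K+x,z)$ and $h(M,-z)\le h(K,-z)$, sums to $h(K+x,z)+h(K,-z)=\wcov{K}(o)+\sprod{x}{z}$. Hence ``\eqref{expression_wcov_in_core} holds iff both are equalities'' is false: for $\sprod{x}{z}>0$ your first inequality can never be an equality (it would force $h(M,z)>h(K,z)$ while $M\subset K$), and the conjunction of your two equality conditions characterizes $-\core K$, not $\core K$. Your step (3) then contains a second, compensating sign slip: $F(K+x,z)\cap K\neq\emptyset$ gives $x\in K-F(K,z)=-(F(K,z)-K)$, which is \emph{not} $F(K,z)-K$ (you explicitly write $-(A-B)=B-A$ and then equate the two anyway), and similarly $F(K,-z)\cap(K+x)\neq\emptyset$ gives $x\in F(K,-z)-K$, not $K-F(K,-z)$. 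The two errors cancel to produce the right-looking final membership conditions, but the chain of equivalences does not actually connect.

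The fix is mechanical and preserves your plan: use the equalities $h(M,z)=h(K,z)$ and $h(M,-z)=h(K+x,-z)$. The first holds iff the supporting hyperplane of $K$ in direction $z$ meets $M$, i.e.\ $F(K,z)\cap(K+x)\neq\emptyset$, i.e.\ $x\in F(K,z)-K$; the second iff $(F(K,-z)+x)\cap K\neq\emptyset$, i.e.\ $x\in K-F(K,-z)$; their conjunction is $x\in\core K$. This is precisely the paper's proof, which phrases the same two conditions in terms of the minimal strip orthogonal to $z$ containing $K\cap(K+x)$.
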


\begin{proof}
Observe that \eqref{expression_wcov_in_core} fails when \( \sprod{x}{z}<0 \) because in this case one has
\[
 	\wcov{K}(o) - \sprod{x}{z} > \wcov{K}(o) = \max_{y\in DK}\wcov{K}(y) \ge \wcov{K}(x).
\]
Moreover, \( \core K \) is contained in \(\{x : \sprod{x}{z}\geq0\} \) because both $F(K,z)-K$ and $K-F(K,-z)$ are contained in that half-space. As a consequence we may assume \( \sprod{x}{z}\geq0 \) to prove the equivalence.

The set \( K\cap(K+x) \) is contained in the strip $S$ bounded by the hyperplane $I_1$ orthogonal to \( z \) and supporting \( K \) at \( F(K,z) \), and by the hyperplane $I_2$ orthogonal to \( z \) and supporting \( K+x \) at \( F(K,-z)+x \). Since $\wid(S,z)$ equals \( \wid(K,z)-\sprod{x}{z} \) and \( \wcov{K}(o)=\wid(K,z) \), we have
\begin{equation*}
 \wcov{K}(x)= \wid\left( K\cap (K+x)\right)
 \leq \wcov{K}(o)-\sprod{x}{z},
\end{equation*}
 with equality holding if and only if $S$ is the minimal strip orthogonal to $z$ containing $K\cap(K+x)$.  This happen exactly when $I_1\cap K$ intersects $K+x$ and  $I_2\cap (K+x)$ intersects $K$, i.e. if and only if
\begin{equation*}
 F(K,z)\cap\left(K+x\right)\neq\emptyset,\quad\text{and}\quad \left(F(K,-z)+x\right)\cap K\neq\emptyset.
\end{equation*}
These conditions are equivalent, respectively, to \( x\in F(K,z)-K \) and to \( x\in K-F(K,-z) \).
\end{proof}

\begin{figure}
\begin{center}
\input{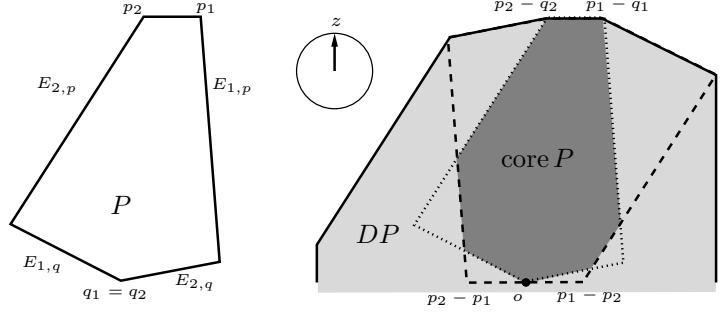}
\end{center}
\caption{The set \( \core P \) (dark gray) and a portion of \( DP \) (light gray). The figure depicts also \( P-F(P,-z) \) (bounded by a dotted line) and \( F(P,z)-P \) (bounded by a dashed line).}
\label{fig:core}
\end{figure}

Let us describe some properties of \( \core P \) for a planar convex polygon $P$ (see Fig.~\ref{fig:core}).
\begin{lemma}\label{lem:description_of_core}Let $P$ be a planar convex polygon and let \( F(P,z)=[p_1,p_2] \) and \( F(P,-z)= [q_1,q_2] \), where \( p_1,p_2,q_1,q_2 \) are in counterclockwise order on \( \bd P \).
 \begin{enumerate}[(I)]
  \item \label{lem:description_of_core_a} We have
 \begin{gather}
 F(\core P,z)=F(P,z)-F(P,-z)=[p_1-q_1,p_2-q_2];\label{face_core_z}\\
 \begin{split}\label{face_core_minusz}
  F(\core P,-z)&= D \big(F(P,z)\big)\cap D\big( F(P,-z)\big)\\
 &=[p_2-p_1, p_1-p_2]\cap[q_2-q_1, q_1-q_2].
 \end{split}
\end{gather}
\item \label{lem:description_of_core_b} Let \( E_{1,p} \) (and \( E_{1,q} \)) be the edge of \( P \) which precedes \( p_1 \) (and \( q_1 \), respectively) on \( \bd P \).  Let us consider  the edge of \( DP \) which precedes \( p_1-q_1 \) and the edge of \( \core P \) which precedes \( p_1-q_1 \). Then one of these edges is parallel to \( E_{1,p} \) and the other one is parallel to  \( E_{1,q} \).

\item \label{lem:description_of_core_c} Let \( E_{2,p} \) (and \( E_{2,q} \)) be the edge of \( P \) which follows \( p_2 \) (and \( q_2 \), respectively) on \( \bd P \). Let us consider  the edge of \( DP \) which follows \( p_2-q_2 \) and the edge of \( \core P \) which follows \( p_2-q_2 \). Then one of these edges is parallel to \( E_{2,p} \) and the other one is parallel to  \( E_{2,q} \).

\item \label{lem:description_of_core_d} If
\( F(P,z) \) is an edge and \( F(P,-z) \) is a vertex then $N(\core P,o)=N(P,q_1)$.
 \end{enumerate}
\end{lemma}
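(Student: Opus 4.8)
The plan is to derive all four parts from the defining representation $\core P=A\cap B$ with $A:=F(P,z)-P$ and $B:=P-F(P,-z)$, by means of the standard calculus of support functions, faces and normal cones of Minkowski sums and of intersections of polytopes. For~\eqref{lem:description_of_core_a} I would argue with support functions: the rules $h(K+L,\cdot)=h(K,\cdot)+h(L,\cdot)$, $h(-K,u)=h(K,-u)$, $F(K+L,u)=F(K,u)+F(L,u)$, $F(-K,u)=-F(K,-u)$ together with the fact that a segment orthogonal to $z$ equals its own face in the directions $\pm z$ give $h(A,z)=h(B,z)=\wid(P,z)$, $h(A,-z)=h(B,-z)=0$, $F(A,z)=F(B,z)=F(P,z)-F(P,-z)$, $F(A,-z)=D(F(P,z))$ and $F(B,-z)=D(F(P,-z))$. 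Since $F(P,z)\subset P$ and $F(P,-z)\subset P$ one has $o\in\core P$, so $h(\core P,-z)=0$; and the common face $F(A,z)=F(B,z)$ lies in $A\cap B=\core P$, so $h(\core P,z)=\wid(P,z)$. Hence $F(\core P,z)=F(A,z)\cap F(B,z)=F(P,z)-F(P,-z)$ and $F(\core P,-z)=F(A,-z)\cap F(B,-z)=D(F(P,z))\cap D(F(P,-z))$, which is~\eqref{lem:description_of_core_a}. The one point requiring care is that the endpoints of $F(P,z)-F(P,-z)$ are $p_1-q_1$ and $p_2-q_2$, not $p_1-q_2$ and $p_2-q_1$: traversing $\bd P$ counterclockwise, an edge with outer normal $u$ is run through in direction $\cR u$, so $F(P,z)$ and $F(P,-z)$ carry opposite orientations, hence $p_1-p_2$ and $q_2-q_1$ are positive multiples of a single vector, and forming the Minkowski sum of the two segments pairs $p_1$ with $-q_1$ and $p_2$ with $-q_2$.

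For~\eqref{lem:description_of_core_b} I would pass to normal cones at the vertex $v:=p_1-q_1$, which by~\eqref{lem:description_of_core_a} is the left endpoint of $F(DP,z)$ and of $F(\core P,z)$. On one hand, \eqref{faces of diff body} gives $N(DP,v)=N(P,p_1)\cap(-N(P,q_1))$. On the other hand, $v=p_1+(-q_1)$ is the unique way to write $v$ as a sum of a point of $F(P,z)$ with a point of $-P$, and also the unique way to write it as a sum of a point of $P$ with a point of $-F(P,-z)$ (once more by the opposite orientation of the two boundary segments), so the rule for normal cones of Minkowski sums gives $N(A,v)=N([p_1,p_2],p_1)\cap(-N(P,q_1))$ and $N(B,v)=N(P,p_1)\cap N([-q_1,-q_2],-q_1)$. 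The two segment-endpoint normal cones coincide and equal a single half-plane bounded by the line $\real z$, and this half-plane contains both $N(P,p_1)$ and $-N(P,q_1)$, these being wedges of angular opening less than $\pi$ issuing from the ray $\real_{\ge0}z$ into that half-plane; hence $N(A,v)=-N(P,q_1)$, $N(B,v)=N(P,p_1)$, and the polyhedral identity $N(A\cap B,v)=N(A,v)+N(B,v)$ yields $N(\core P,v)=N(P,p_1)+(-N(P,q_1))$. Thus $N(DP,v)$ and $N(\core P,v)$ are respectively the intersection and the Minkowski sum of the two wedges $N(P,p_1)$ and $-N(P,q_1)$, which share the extreme ray $\real_{\ge0}z$ and open the same way, their other extreme rays being spanned by the outer normals of $E_{1,p}$ and of $-E_{1,q}$; therefore the non-$z$ extreme of $N(DP,v)$ is whichever of these two normals is nearer to $z$ and that of $N(\core P,v)$ is the farther one. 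The edge of $DP$ (resp.\ of $\core P$) preceding $v$ carries that extreme as its outer normal, and an edge whose outer normal equals that of $E_{1,p}$ is parallel to $E_{1,p}$ while one whose outer normal equals that of $-E_{1,q}$ is parallel to $E_{1,q}$; this gives~\eqref{lem:description_of_core_b}. Statement~\eqref{lem:description_of_core_c} follows by applying the same argument to $\sigma P$, where $\sigma$ is the orthogonal reflection in the line $\real z$: indeed $\sigma$ fixes $z$, satisfies $\core(\sigma P)=\sigma(\core P)$ and $D(\sigma P)=\sigma(DP)$, reverses the cyclic orientation of $\bd P$, and thereby interchanges ``precedes'' with ``follows'' and $(p_1,q_1,E_{1,p},E_{1,q})$ with $(p_2,q_2,E_{2,p},E_{2,q})$.

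For~\eqref{lem:description_of_core_d}, set $q:=q_1=q_2$. Then $B=P-q$ is a translate of $P$ taking $q$ to $o$, so near $o$ it coincides with the tangent cone $T$ of $P$ at $q$ (with apex at $o$); and $-z\in N(P,q)$ forces $T\subset\{x:\sprod{x}{z}\ge0\}$. Meanwhile $A=F(P,z)-P$ has $o$ in the relative interior of its edge $F(A,-z)=D(F(P,z))$, a nondegenerate segment orthogonal to $z$, so near $o$ the set $A$ coincides with the half-plane $\{x:\sprod{x}{z}\ge0\}$. Intersecting the two local descriptions, $\core P$ coincides near $o$ with $T$, whence $N(\core P,o)$ equals the normal cone of $P$ at $q$, i.e.\ $N(P,q_1)$. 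The step needing genuine care is the cone bookkeeping in~\eqref{lem:description_of_core_b}--\eqref{lem:description_of_core_c}: verifying that the two half-plane constraints are redundant, and keeping straight which candidate extreme normal is realized by $DP$ and which by $\core P$; this is cleanest if one fixes the line $\real z$ as reference and records every cone as a clockwise arc starting from the ray $\real_{\ge0}z$. Everything else reduces to routine manipulations with support functions and normal cones of polytopes.
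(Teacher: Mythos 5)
Your argument reaches all four conclusions and is correct in substance, but it takes a more formal route than the paper's: where you run everything through the calculus of faces and normal cones of Minkowski sums and intersections, the paper simply writes out the boundaries of $P^{+}=P-F(P,-z)$ and $P^{-}=F(P,z)-P$ as explicit unions of translated sub-arcs of $\bd P$ and $-\bd P$ (e.g.\ $\bd P^{-}$ is the union of $[p_2-p_1,p_1-p_2]$, $p_2-[q_2,p_1]_{\bd P}$, $[p_1-q_1,p_2-q_2]$ and $p_1-[p_2,q_1]_{\bd P}$) and reads parts (I)--(IV) off that description; your reflection trick for deducing (III) from (II) replaces the paper's ``analogously''. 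Your parts (I) and (IV) are fine as written and essentially coincide with the paper's.

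The one step that needs repair is the cone bookkeeping in (II). The identities $N(A,v)=-N(P,q_1)$ and $N(B,v)=N(P,p_1)$, together with the justification that ``the two segment-endpoint normal cones coincide and equal a single half-plane bounded by $\real z$, which contains both $N(P,p_1)$ and $-N(P,q_1)$'', are valid only when both $F(P,z)$ and $F(P,-z)$ are nondegenerate edges. If, say, $F(P,z)$ is an edge while $F(P,-z)=\{q_1\}$ is a vertex, then $N([-q_1,-q_2],-q_1)=\real^2$ (so $N(B,v)=N(P,p_1)$ holds, but for a different reason), whereas $-N(P,q_1)$ contains $z$ in its relative interior and is \emph{not} contained in the half-plane $\{u:\sprod{u}{\cR z}\le 0\}$; consequently $N(A,v)$ is the proper subcone of $-N(P,q_1)$ cut off at $z$, and your claimed identity for $N(A,v)$ fails (symmetrically for the other mixed case). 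The conclusion nevertheless survives: truncating either cone on the $z$-side does not change its counterclockwise-first extreme ray, and it is only the first extreme rays of $N(DP,v)=N(P,p_1)\cap(-N(P,q_1))$ and of $N(\core P,v)=N(A,v)+N(B,v)$ that identify the edges \emph{preceding} $p_1-q_1$. You should either treat the degenerate cases explicitly or phrase the comparison purely in terms of those first extreme rays; the lemma is invoked later precisely in situations where $F(P,\pm z)$ may be vertices, so these cases cannot be discarded.
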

\begin{proof}
The set \( \bd P \) can be decomposed as the disjoint (except for the endpoints) union  of \( [p_1,p_2] \), \( [p_2,q_1]_{\bd P} \), \( [q_1,q_2] \) and \( [q_2,p_1]_{\bd P} \). Using this decomposition we can describe the boundaries of \( P-F(P,-z) \) and of \( F(P,z)-P \) as follows.
The set \( P^+:=P-F(P,-z) \) is bounded by the union of the arcs \( [p_1-q_1,p_2-q_2] \), \( [p_2,q_1]_{\bd P}-q_2 \), \( [q_1-q_2,q_2-q_1] \) and \( [q_2,p_1]_{\bd P}-q_1 \). The set \( P^-:=F(P,z)-P \) is bounded by the union of the arcs \( [p_2-p_1, p_1-p_2] \), \( p_2-[q_2,p_1]_{\bd P} \), \( [p_1-q_1,p_2-q_2] \) and \( p_1-[p_2,q_1]_{\bd P} \).

This description implies  \( F(P^+,z)=F(P^-,z)=[p_1-q_1,p_2-q_2] \), \( F(P^+,-z)=[q_1-q_2,q_2-q_1] \) and \( F(P^-,-z)=[p_2-p_1, p_1-p_2] \). Note that \( F(P^+,z) \) and \( F(P^-,z) \) are parallel and centered at \( o \). This proves \eqref{lem:description_of_core_a}.

When $p_1\neq p_2$ and $q_1=q_2$, then $F(P^-,-z)$ is an edge,   $F(P^+,-z)=o$  and $P^+\cap U=(P-q_1)\cap U$, for every small neighborhood $U$ of $o$. Thus we have $(\core P)\cap U=(P-q_1)\cap U$. This proves \eqref{lem:description_of_core_d}.

In order to prove \eqref{lem:description_of_core_b} and \eqref{lem:description_of_core_c} we observe that  \eqref{faces of diff body} implies
\begin{multline*}
 \{u\in\ucircle: \text{$F(DP,u)$ is an edge}\}=\{u\in\ucircle: \text{$F(P,u)$ is an edge}\}\\ \cup \{u\in\ucircle: \text{$F(-P,u)$ is an edge}\}.
\end{multline*}
Let \( \{u_1,u_2\} \) be the set consisting of  the unit outer normal vector to the edge \( E_{1,p} \) of \( P \) and of the unit outer normal vector to the edge \( -E_{1,q} \) of \( -P \). Label these vectors so that \( u_1 \), \( u_2 \) and \( z \) are on this order on \( \ucircle \). Then the edge of \( DP \) which precedes \( p_1-q_1 \) has outer normal vector \( u_2 \), while the edge of \( \core P \) which precedes \( p_1-q_1 \) has outer normal vector \( u_1 \). This proves \eqref{lem:description_of_core_b}, while \eqref{lem:description_of_core_c} can be proved analogously.
\end{proof}

Let us prove the equivalent of Lemma~\ref{curv info lem} for the width-covariogram.
\begin{lemma} \label{lem:curv_info_width}
Let \( \phi(\cdot)=\wid(\cdot,z) \), for some $z\in\ucircle$. Let \( P \) be a convex polygon in $\real^2$ and $u \in \ucircle$. Then \( \wcov{P} \) determines the set
	\begin{equation*}
		\{\ci(P,u), \ci(-P,u)\}.
	\end{equation*}
\end{lemma}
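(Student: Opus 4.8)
The plan is to follow the proof of Lemma~\ref{curv info lem} as closely as possible, replacing each appeal to strict monotonicity (which $\wid(\cdot,z)$ lacks) by information read off from $\core P$, which is available by Lemma~\ref{lem:width_core}. Throughout, $B$ denotes the strip $\{x\in\real^2:|\sprod{x}{z}|\le1\}$, so that $\wid(\cdot,z)=\perim_B/2$; I freely use that $\supp\wcov{P}=DP$ (Theorem~\ref{g phi thm}) and $\core P$ are both determined by $\wcov{P}$.

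First I would prove the analogue of Claim~\ref{opposite lengths claim}: $\wcov{P}$ determines $\{\len F(P,u),\len F(P,-u)\}$. If $F(DP,u)$ is a vertex both lengths vanish by \eqref{faces of diff body}; otherwise $\len F(DP,u)=\len F(P,u)+\len F(P,-u)$ is known from $DP$, so it is enough to recover $\min\{\len F(P,u),\len F(P,-u)\}$. For $u\neq\pm z$ one has $\|\cR u\|_B=|\sprod{\cR u}{z}|\neq0$, and evaluating $\wcov{P}$ at the midpoint $x_0$ of $F(DP,u)$ gives $\wcov{P}(x_0)=|\sprod{\cR u}{z}|\min\{\len F(P,u),\len F(P,-u)\}$, exactly as in Lemma~\ref{curv info lem}. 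For $u=\pm z$ this argument degenerates, but by Lemma~\ref{lem:description_of_core}\eqref{lem:description_of_core_a} the face $F(\core P,-z)=D\big(F(P,z)\big)\cap D\big(F(P,-z)\big)$ is a segment of Euclidean length $2\min\{\len F(P,z),\len F(P,-z)\}$, which supplies the missing minimum. When both lengths are positive this already yields $\{\ci(P,u),\ci(-P,u)\}$.

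Next comes the vertex--edge case: after a reflection we may assume $F(P,u)=[p,p']$ is an edge and $F(P,-u)=\{a\}$, and the task is to determine $N(P,a)$. If $u=\pm z$ this is handed to us by Lemma~\ref{lem:description_of_core}\eqref{lem:description_of_core_d}, applied either to $P$ or to $-P$ and using the identity $\core(-P)=\core P$. If $u\neq\pm z$ I would reuse the construction from the proof of Claim~\ref{determ vertex-edge claim}: there is a triangle $\conv\{x_1,x_2,y\}\subset DP$ with $[x_1,x_2]=F(DP,u)$ whose other two sides are parallel to the edges $E_1,E_2$ of $P$ at $a$, and on whose interior $P\cap(P+x)$ changes only by a translation, so $\wcov{P}$ is constant there. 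The crux is that $\wcov{P}$ is \emph{strictly} smaller when $x$ crosses a given side of this triangle exactly when the corresponding edge of $P$ at $a$ is not parallel to $\cR z$; equivalently, this happens across both sides unless $a\in F(P,z)\cup F(P,-z)$. Indeed, crossing a side replaces the locally translated set $P\cap(P+x)$ by a proper subset obtained by cutting a corner off the thin triangle with an edge of $P$, and a short computation with the linear functional $\sprod{\cdot}{z}$ shows that the $z$-width drops iff the removed corner realised a $z$-extreme of the triangle, which occurs iff the relevant edge direction is not orthogonal to $z$. In the generic case $a\notin F(P,z)\cup F(P,-z)$ both edge directions, hence $N(P,a)$, are recovered in this way. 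Otherwise one edge of $P$ at $a$ is a subarc of $F(P,z)$ or of $F(P,-z)$, hence has direction $\cR z$; its presence is signalled by the failure of strict decrease on one side, the other edge direction is obtained from the side on which strict decrease does occur, and the two outer normals (one being $z$ or $-z$) are then assembled into the unique pointed cone having $-u$ in its interior --- and if in addition $F(P,z)$ or $F(P,-z)$ is itself a vertex, Lemma~\ref{lem:description_of_core}\eqref{lem:description_of_core_b}--\eqref{lem:description_of_core_c} supplies the directions of the two edges of $P$ at that vertex, which together with the previous information determine $N(P,a)$.

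Finally the vertex--vertex case $F(P,u)=\{a_1\}$, $F(P,-u)=\{a_2\}$ is reduced to the vertex--edge case verbatim as in Claim~\ref{determ vertex-vertex claim}: if some $w\in\ucircle$ has $F(P,w)=\{a_i\}$ with $F(P,-w)$ an edge, the preceding step applied at $w$ determines $N(P,a_i)$ up to reflection, and the known identity $N(DP,a_1-a_2)=N(P,a_1)\cap(-N(P,a_2))$ then pins down the unordered pair $\{N(P,a_1),-N(P,a_2)\}=\{\ci(P,u),\ci(-P,u)\}$; if no such $w$ exists then $N(P,a_1)=-N(P,a_2)=N(DP,a_1-a_2)$. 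The step I expect to be the main obstacle is the vertex--edge case for $u\neq\pm z$: since $\wid(\cdot,z)$ is not strictly monotone, the constancy region of $\wcov{P}$ may be strictly larger than $\conv\{x_1,x_2,y\}$, so one must pinpoint exactly when this happens (precisely when $a$ is a $z$-extreme point of $\bd P$) and then recover the hidden edge direction --- which turns out to be $\cR z$ --- and its orientation from $\core P$ and from convexity of the normal cone; with this in hand the rest parallels the strictly monotone proof.
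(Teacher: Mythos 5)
Your architecture is the right one (it is the paper's): determine the opposite lengths using $\core P$ to recover $\min\{\len F(P,z),\len F(P,-z)\}$ via \eqref{face_core_minusz}, handle the vertex--edge case by testing where $\wcov{P}$ drops off the triangle $\conv\{x_1,x_2,y\}$, and reduce the vertex--vertex case verbatim. Those parts are fine. The gap is in the vertex--edge case for $u\ne\pm z$, where your two formulations of when the strict decrease fails are not equivalent, and the one your reconstruction relies on is false. Write $T_x$ for the thin triangle $P\cap(P+x)$ with apex $q_0=a+x$ and base $[q_1,q_2]\subset\aff F(P,u)$, where $q_i=q_0+s_ie_i$ and $e_i$ is the direction of $E_i$. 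Crossing a side of $\conv\{x_1,x_2,y\}$ cuts a small corner at $q_1$ or at $q_2$, and $\wid(\cdot,z)$ drops iff the cut corner is the \emph{unique} $z$-extreme vertex of $T_x$. If $F(P,-z)=\{a\}$ is a vertex, then $\sprod{e_1}{z}>0$ and $\sprod{e_2}{z}>0$, the apex is the unique minimum, and (say) $q_2$ is the unique maximum; cutting at $q_1$ then leaves the width unchanged even though $E_1$ is \emph{not} orthogonal to $z$. Your procedure would declare $E_1$ parallel to $\cR z$ and assemble a wrong normal cone; the symmetric failure occurs when $F(P,z)=\{a\}$. The correct dichotomy is the one the paper uses: the drop is strict on both sides iff $\pm z\notin N(P,a)$, i.e.\ iff $a\notin F(P,z)\cup F(P,-z)$, and when it fails the observable boundary of the constancy region on the silent side does not reveal the direction of the corresponding edge at $a$.

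Your fallback does not close this. When exactly one of $F(P,z)$, $F(P,-z)$ is an edge, Lemma~\ref{lem:description_of_core}~\eqref{lem:description_of_core_d} indeed rescues you (as it does in the paper). But when both are vertices, parts \eqref{lem:description_of_core_b}--\eqref{lem:description_of_core_c} only give the \emph{unordered} pairs $\{u_1,u_2\}$ and $\{v_1,v_2\}$, each mixing a normal of an edge at the top vertex with one at the bottom vertex; there are two consistent pairings into candidate cones, and the requirement that $-u$ lie in the interior does not disambiguate, since one candidate arc contains the other. Resolving exactly this two-fold ambiguity is the content of the paper's Claim~\ref{cla:w_determine_normal_cones_at_z}: one evaluates $\wcov{P}$ at $x=-\eps\cR\alpha_3$ and compares $\wid(P,z)-\wcov{P}(x)$ with the $z$-widths of two explicitly constructed triangles $T_1$, $T_2$, one strictly contained in a translate of the reflection of the other, so that $\wid(T_1,z)\neq\wid(T_2,z)$ and the measured value decides the pairing. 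This determination of $\{N(P,p_i),-N(P,q_i)\}$ is the genuinely new ingredient of the width case, and it is missing from your proposal.
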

\begin{proof}
The proof of this lemma is divided into the proofs of Claims~\ref{cla:w_determination_opposite_lengths}, \ref{cla:w_determine_normal_cones_at_z}, \ref{cla:w_determination_vertex_edge} and~\ref{cla:w_determination_vertex_vertex}.

\begin{claim}\label{cla:w_determination_opposite_lengths}
For each \( u \in \ucircle \), \( \wcov{P} \) determines \( \{\length (F(P,u)), \length (F(P,-u))\} \).
\end{claim}

\begin{proof} This is proved as Claim~\ref{opposite lengths claim} except for the determination of
\[
 \min\{\length (F(P,z)), \length (F(P,-z))\}
\]
when \( u=z \) or \( u=-z \). This expression is determined by  \( \core P \), since it coincides with $(1/2)\len(F(\core P,-z))$, by \eqref{face_core_minusz}.
\end{proof}

\begin{claim}\label{cla:w_determine_normal_cones_at_z}
Let \( p_1 \), \( p_2 \), \( q_1 \) and \( q_2 \) be as in the statement of Lemma~\ref{lem:description_of_core}. Let \( C_1=N(P,p_1) \), \( C_2=N(P,p_2) \), \( D_1=N(P,q_1) \) and \( D_2=N(P,q_2) \). Then \( \wcov{P} \) determines \( \{C_1,-D_1\} \) and \( \{C_2,-D_2\} \).
\end{claim}

\begin{proof}
We recall that \( [p_1-q_1,p_2-q_2]=F(DP,z)=F(\core P,z) \) by~\eqref{faces of diff body} and ~\eqref{face_core_minusz}. Let \( \{u_1,u_2\} \) be the set consisting of  the unit outer normal vectors to the edge of \( DP \) which precedes \( p_1-q_1 \) and to the edge of \( \core P \) which precedes \( p_1-q_1 \). Let \( \{v_1,v_2\} \) be defined analogously as unit outer normals to the edges of  \( DP \) and $\core P$ which follow \( p_2-q_2 \). We distinguish three cases according to whether \( F(P,z) \) and \( F(P,-z) \) are edges or not.

Assume that both \( F(P,z) \) and \( F(P,-z) \) are edges. In this case $z$ is the right endpoint of  \( C_1\cap \ucircle \) and of \( (-D_1)\cap \ucircle \). The set of the left endpoints of these arcs coincide with \( \{u_1,u_2\} \), by Lemma~\ref{lem:description_of_core}~\eqref{lem:description_of_core_b}. Thus we have
\[
\{C_1\cap \ucircle,(-D_1)\cap\ucircle\}=\{[u_1,z]_{\ucircle}, [u_2,z]_{\ucircle}\}.
\]
A similar argument determines \( \{C_2,-D_2\} \).

Assume that exactly one among \( F(P,z) \) and \( F(P,-z) \) is an edge. We may assume, up to reflection, that the edge is \( F(P,z) \). Then
\[
 D_1=D_2=N(\core P,o),
\]
by Lemma~\ref{lem:description_of_core}~\eqref{lem:description_of_core_d}. The right endpoint of \( C_1\cap \ucircle \) is  \( z \). Its left endpoint is $u_1$, if $u_1=u_2$, or is the vector in \( \{u_1,u_2\} \) which is not left endpoint of $(-D_1)\cap\ucircle$,  if $u_1\neq u_2$. A similar argument determines \( \{C_2,-D_2\} \).

Assume that both \( F(P,z) \) and \( F(P,-z) \) are vertices. We have \( C_1=C_2 \) and \( D_1=D_2 \). 
The set of the left endpoints of $C_1\cap\ucircle$ and of $(-D_1)\cap\ucircle$ coincides with $\{u_1,u_2\}$, while the set of the right endpoints is $\{v_1,v_2\}$. If $v_1=v_2$ then
\[
	\{C_1\cap\ucircle,(-D_1)\cap\ucircle\}
		=
	\{[u_1,v_1]_{\ucircle}, [u_2,v_1]_{\ucircle}\}.
\] 
A similar formula holds when $u_1=u_2$. We may thus assume $u_1\neq u_2$ and $v_1\neq v_2$. Relabel  these vectors so that $\{u_1,u_2\}=\{\alpha_1,\alpha_2\}$, $\{v_1,v_2\}=\{\alpha_3,\alpha_4\}$ and  \( \alpha_1 \), \( \alpha_2 \), \( \alpha_3 \) and \( \alpha_4 \) are  in counterclockwise order on \( \ucircle \), with $z\in[\alpha_2,\alpha_3]_{\ucircle}$.
We may assume, after possibly replacing $P$ by $-P$, that \( \alpha_1 \) is the left endpoint of \( C_1\cap \ucircle \). We have to determine the right endpoint of $C_1 \cap \ucircle$. Let
\[
	x= -\varepsilon \cR \alpha_3,
\]
with \( \varepsilon>0 \) small enough (we recall that $\cR \alpha_3$ is the counterclockwise rotation of $\alpha_3$ by $90$ degrees), and let \( S \) be the minimal strip orthogonal to \( z \) and containing \( P\cap(P+x) \).
We distinguish two cases according to whether \( C_1\cap \ucircle=[\alpha_1,\alpha_4]_{\ucircle} \) or \( C_1\cap \ucircle=[\alpha_1,\alpha_3]_{\ucircle} \). Let \( E_{1,p} \), \( E_{2,p} \), \( E_{1,q} \) and \( E_{2,q} \) be as in the statement of Lemma~\ref{lem:description_of_core}.

\begin{figure}
\begin{center}
\input{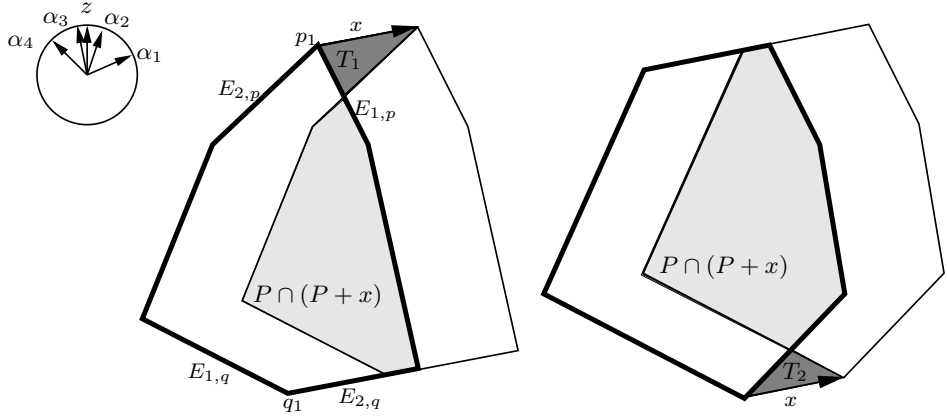}
\end{center}
\caption{\( P\cap(P+x) \) (light gray) when \( C_1\cap \ucircle=[\alpha_1,\alpha_4]_{\ucircle} \), on the left, and when \( C_1\cap \ucircle=[\alpha_1,\alpha_3]_{\ucircle} \), on the right. The triangles \( T_1 \) and \( T_2 \) are filled in dark gray.}
\label{fig:determination_cones_z}
\end{figure}

Assume \( C_1\cap \ucircle=[\alpha_1,\alpha_4]_{\ucircle} \). In this case \( (-D_1)\cap \ucircle=[\alpha_2,\alpha_3]_{\ucircle} \), \( E_{1,p} \), \( E_{2,p} \), \( E_{1,q} \) and \( E_{2,q} \) are orthogonal respectively to \( \alpha_1 \), \( \alpha_4 \), \( \alpha_2 \) and \( \alpha_3 \), see Fig.~\ref{fig:determination_cones_z}.
We have \( q_1+x\in P \) and thus one of the two lines  bounding \( S \) passes through \( q_1+x \). The other line bounding \( S \) contains the point \( E_{1,p}\cap(E_{2,p}+x) \). If we define
\[
 T_1 : =\conv\big\{p_1,p_1+x,E_{1,p}\cap(E_{2,p}+x)\big\},
\]
then we have
\begin{equation}\label{cla:w_determine_normal_cones_at_z_a}
 \wcov{P}(x)=\wid(P\cap(P+x),z)=\wid(P,z)-\wid(T_1,z).
\end{equation}
Assume \( C_1\cap \ucircle=[\alpha_1,\alpha_3]_{\ucircle} \). In this case \( (-D_1)\cap \ucircle=[\alpha_2,\alpha_4]_{\ucircle} \), \( E_{1,p} \), \( E_{2,p} \), \( E_{1,q} \) and \( E_{2,q} \) are orthogonal respectively to \( \alpha_1 \), \( \alpha_3 \), \( \alpha_2 \) and \( \alpha_4 \).
We have \( p_1\in P+x \) and thus one of the two lines  bounding \( S \) passes through \( p_1 \). The other line bounding \( S \) contains the point \( E_{2,q}\cap(E_{1,q}+x) \). If we define
\[
 T_2 : =\conv\big\{q_1,q_1+x,E_{2,q}\cap(E_{1,q}+x)\big\},
\]
then we have
\begin{equation}\label{cla:w_determine_normal_cones_at_z_b}
 \wcov{P}(x)=\wid(P\cap(P+x),z)=\wid(P,z)-\wid(T_2,z).
\end{equation}
 Both \( T_1 \) and \( T_2 \) have an edge equal to a translate of \( x \) and an edge orthogonal to \( \alpha_4 \). Since the third edge of \( T_1 \) is orthogonal to \( \alpha_1 \) while the third edge of \( T_2 \) is orthogonal to \( \alpha_2 \), the order between \( \alpha_1 \) and \( \alpha_2 \) implies that a translate of \( - T_2 \) is strictly contained in \( T_1 \) and  \( \wid(T_1,z)>\wid(T_2,z) \).

 The width-covariogram determines $\{u_1,u_2\}$ and $\{v_1,v_2\}$ and, through these vectors, $w(T_1,z)$ and $w(T_2,z)$.
It also determines $\wid(P,z)=\wcov{P}(o)$. It is thus possible to understand whether~\eqref{cla:w_determine_normal_cones_at_z_a} holds or~\eqref{cla:w_determine_normal_cones_at_z_b} holds and, through this choice, to decide whether \( C_1\cap \ucircle=[\alpha_1,\alpha_4]_{\ucircle} \) or \( C_1\cap \ucircle=[\alpha_1,\alpha_3]_{\ucircle} \).
\end{proof}

\begin{claim}\label{cla:w_determination_vertex_edge}
Assume that  \( \length (F(P,u)) \) and \( \length (F(P,-u)) \) are not both \( 0 \). Then \( \wcov{P} \) determines \(\{\ci(P,u), \ci(-P,u)\}\).
\end{claim}

\begin{proof}
When both lengths are positive the assertion is a consequence of Claim~\ref{cla:w_determination_opposite_lengths}. 
Assume that exactly one length vanishes.
We may suppose, up to  reflection, that \( F(P,u) \) is an edge and \( F(P,-u) \) is a vertex, say \( a \). In view of Claim~\ref{opposite lengths claim} it suffices to show that \( \wcov{P} \) determines \( N(P,a) \).

We distinguish two cases according to whether
\begin{equation}\label{condition_wclaim_2}
-u\in \intr C_1\cup\intr C_2\cup \intr D_1\cup\intr D_2
\end{equation}
or not.
By Claim~\ref{cla:w_determine_normal_cones_at_z}, the knowledge of $\wcov{K}$ makes it possible to determine the set of cones
\begin{equation}\label{cla:w_determine_normal_cones_at_z_ba}
 \{C_1,-C_1,D_1,-D_1,C_2,-C_2,D_2,-D_2\}.
\end{equation}
Since $u$ does not belong to the interior of any normal cone at a vertex of $P$ (because $F(P,u)$ is an edge, by assumption), \eqref{condition_wclaim_2} holds if and only if \( -u \) belongs to the interior of a cone in the set in \eqref{cla:w_determine_normal_cones_at_z_ba}.
Therefore the knowledge of $\wcov{K}$ makes it possible to understand whether \eqref{condition_wclaim_2} holds or not.

Assume that \eqref{condition_wclaim_2} does not hold.
Let us adopt the notations introduced in the proof of Claim~\ref{determ vertex-edge claim}. Let $T:=\conv \{x_1,x_2,y\}$. To determine $N(P,a)$ it suffices to determine $m_\varepsilon\cap T$.
As in Claim~\ref{determ vertex-edge claim},   \( \wcov{P}(x) \) is  constant when \( x\in m_\varepsilon\cap T \), because \( P\cap(P+x) \) changes only by a translation. Let \( x^\prime\in m_\varepsilon\cap T \) and \( x^{\prime\prime}\in m_\varepsilon\setminus T \), and let us prove that 
\begin{equation}\label{cla:w_determination_vertex_edge_aa}
 \wcov{P}(x^\prime)>\wcov{P}(x^{\prime\prime}).
\end{equation}
We remark that   a translation of \( P\cap(P+x^{\prime\prime}) \) is strictly contained in \( P\cap(P+x^{\prime}) \) and that, contrary to Claim~\ref{determ vertex-edge claim}, this inclusion alone it is not sufficient to show \eqref{cla:w_determination_vertex_edge_aa},
 because the width is not strictly monotone. 
Elementary arguments imply that in order to prove~\eqref{cla:w_determination_vertex_edge_aa} it suffices to prove that the boundary of the minimal strip orthogonal to \( z \) and containing  \( T \) intersects \( T \) only at \( x_1 \) and \( x_2 \). This is equivalent to prove that 
\begin{equation}\label{cla:w_determination_vertex_edge_a}
 z\notin N(T,y),\quad -z\notin N(T,y)\quad\text{ and }\quad z\neq \pm u.
\end{equation}
To prove $z, -z\notin N(T,y)$ we observe that $N(T,y)=N(P,a)$, by construction. 
If $\pm z\in N(P,a)$ then \( N(P,a) \) coincides, up to reflection, with $C_1$ or $C_2$ or $D_1$ or $D_2$, and this contradicts the assumption regarding~\eqref{condition_wclaim_2}, since $-u\in \intr N(P,a)$. 
The fact that $N(P,a)$ does not contain $z$ or $-z$ also implies \( u\neq z \) and \( u\neq-z \) (again because $-u\in \intr N(P,a)$). 

Assume that \eqref{condition_wclaim_2} hold.
If \( u=z \) we have  \( a=q_1=q_2 \)  and \( N(P,a)=D_1=D_2 \). Note that we have $p_1\neq p_2$ (because $F(P,u)$ is an edge, by assumption) and, as a consequence,  $C_1\neq C_2$. By Claim~\ref{cla:w_determine_normal_cones_at_z}, \( D_1 \) can be determined as the only cone in common to \( \{-C_1,D_1\} \) and \( \{-C_2,D_2\} \), where both $\{-C_1,D_1\}$ and $\{-C_2,D_2\}$ are determined by the $\phi$-covariogram.

When \( u=-z \) the argument is similar.
Assume \( u\neq z \) and \( u\neq -z \). Condition \eqref{condition_wclaim_2} implies  \( z\in N(P,a) \) or  \( -z\in N(P,a) \). This means that \( N(P,a) \) coincides  with either \( C_1 \) or \( C_2 \) or \( D_1 \) or \( D_2 \), because  these are the only normal cones at vertices of \( P \) containing \( z \) or \( -z \). We observe that among the eight cones in the union of \( \{C_1,-D_1\} \), \( \{C_2,-D_2\} \), \( \{-C_1,D_1\} \) and \( \{-C_2,D_2\} \) only one contains $-u$ in the interior, because  $F(P,u)$ is an edge.  Thus \( N(P,a) \) can be determined as the only cone in the union of \( \{C_1,-D_1\} \), \( \{C_2,-D_2\} \), \( \{-C_1,D_1\} \) and \( \{-C_2,D_2\} \) containing \( -u \) in its interior.
\end{proof}

\begin{claim}\label{cla:w_determination_vertex_vertex}
Assume  \( \length (F(P,u))=\length (F(P,-u))=0 \). Then \( \wcov{P} \)  determines \(\{\ci(P,u), \ci(-P,u)\}\).
\end{claim}

\begin{proof}
It coincides with the proof of Claim~\ref{determ vertex-vertex claim}.
\end{proof}
The proof of Lemma~\ref{lem:curv_info_width} is concluded.
\end{proof}

For the width-covariogram, Lemma~\ref{symmetric arcs lem} holds in a weaker form. 
The next two lemmas prove results which play for the width-covariogram the role played by Lemma~\ref{symmetric arcs lem} for the case of strictly monotone valuations.

\begin{lemma}\label{lem:w_symmetric_arcs}
Let \( P \), \( Q \), \( A^+ \), \( A^- \), \(u_0\), \( a_1^+ \), \( a_2^+ \), \( a_1^- \) and \( a_2^- \) be  as in Lemma~\ref{symmetric arcs lem}. Assume that neither $A^+$ nor $A^-$ are points or segments.
Let \( u\in \ucircle \) and \( i\in\{1,2\} \) be such that \( l_u+a_i^+ \) intersects \( \relint A^- \), and \( l_u+a_i^- \) intersects \( \relint A^+ \) (see Fig.~\ref{fig:w_arcs}).

Let \( S_P \)  and \( S_Q \) denote the minimal strips orthogonal to \( z \) and containing \( P \) and \( Q \), respectively.
Let \( S \) be the minimal strip orthogonal to \( z \) and containing the convex hull of the sub-arc of \( A^+ \) with endpoints \( a_i^+ \) and \( (l_u+a_i^-)\cap A^+ \) and of the sub-arc of \( A^- \) with endpoints \( a_i^- \) and \( (l_u+a_i^+)\cap A^- \).
\begin{enumerate}[(I)]
\item \label{cla:w_symmetric_arcs_I}If there exists \( v\in\{z,-z\} \) such that 
\begin{equation}\label{interior_to_union_of_strips}
F(S,v)\subset\intr(S_P\cup S_Q)
\end{equation}
then \( F(S,v) \) intersects one
of the two chords \( [a_i^+,(l_u+a_i^+)\cap A^-] \) and \( [a_i^-,(l_u+a_i^-)\cap A^+] \), and the length of the chord intersected by \( F(S,v) \) is less than or equal to the length of the other chord.
\item \label{cla:w_symmetric_arcs_II}If \( S\subset\intr(S_P\cup S_Q) \) then 
\begin{equation}\label{chords_equal_length}
 \len\left(  [a_i^+,(l_u+a_i^+)\cap A^-]\right)=\len\left(  [a_i^-,(l_u+a_i^-)\cap A^+]\right).
\end{equation}
\end{enumerate}
\end{lemma}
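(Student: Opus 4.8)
The plan is to mimic the proof of Lemma~\ref{symmetric arcs lem}, with the position hypotheses playing the role of the strict monotonicity that is unavailable for the width. Throughout, realise \( \phi=\wid(\cdot,z) \) as \( \tfrac12\perim_B \) with \( B \) the \( z \)-orthogonal strip \( \setcond{y\in\real^2}{|\sprod{y}{z}|\le1} \), so that Theorem~\ref{radial deriv of g perim} is available and \( \len_B \) is the total variation of the \( z \)-coordinate; recall \( DP=DQ \) by Theorem~\ref{g phi thm}. Put \( \ell_1:=l_u+a_i^+ \), \( \ell_2:=l_u+a_i^- \), \( b^-:=\ell_1\cap A^- \), \( b^+:=\ell_2\cap A^+ \) (single points, by the transversality implicit in the hypotheses). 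Then the two chords of the statement are \( c_1:=[a_i^+,b^-]=P\cap\ell_1=Q\cap\ell_1 \) and \( c_2:=[a_i^-,b^+]=P\cap\ell_2=Q\cap\ell_2 \), both parallel to \( u \); the two subarcs are \( \gamma^+\subseteq A^+ \) from \( a_i^+ \) to \( b^+ \) and \( \gamma^-\subseteq A^- \) from \( a_i^- \) to \( b^- \); and \( R:=\conv(\gamma^+\cup\gamma^-) \), so that \( S \) is the minimal \( z \)-orthogonal strip containing \( R \).

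First I would prove the incidence assertion in (I). Fix \( v\in\{z,-z\} \) with \( F(S,v)\subset\intr(S_P\cup S_Q) \), and let \( m\in\bd R \) lie on \( F(S,v) \), i.e.\ let \( m \) be a \( v \)-extreme point of \( R \). If \( m \) lay in the relative interior of \( \gamma^+ \) or \( \gamma^- \), then, since \( \gamma^\pm\subseteq\bd P\cap\bd Q \) and small neighbourhoods of \( m \) in \( \bd P \) and in \( \bd Q \) both coincide with a piece of \( \gamma^\pm \), the point \( m \) would be a local, hence global, \( v \)-maximum of both \( \bd P \) and \( \bd Q \); thus \( F(S,v) \) would support both \( P \) and \( Q \), that is \( F(S,v)=F(S_P,v)=F(S_Q,v) \), contradicting \( F(S,v)\subset\intr(S_P\cup S_Q) \). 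Hence \( m \) is either an endpoint of \( \gamma^\pm \) or lies in the relative interior of a boundary segment of \( R \) joining two points of \( \gamma^+\cup\gamma^- \); in the latter case that segment lies in \( F(S,v) \), and by the previous step its endpoints are endpoints of \( \gamma^\pm \). Since the endpoints of \( \gamma^\pm \) are \( a_i^+,b^+,a_i^-,b^- \), all lying on \( c_1\cup c_2 \), in every case \( F(S,v) \) contains a point of \( c_1\cup c_2 \).

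The length comparison in (I) is the main point. Suppose \( F(S,v) \) meets \( c_1 \) (the case of \( c_2 \) being symmetric), and, towards a contradiction, that \( \len(c_1)>\len(c_2) \). Running the construction from the proof of Lemma~\ref{symmetric arcs lem} verbatim — now with the arcs \( A^+,A^- \), the direction \( u \), the points \( a_i^\pm \) in place of \( a_1^\pm \), and Lemma~\ref{lem:curv_info_width} in place of Lemma~\ref{curv info lem} — produces a vector \( x \) parallel to \( u \), a common chord \( F_{PQ} \) of \( P \) and \( Q \) parallel to \( u \), and triangles \( T_P=\conv\{c_1^*,c_2^*,c_3^*\}\subseteq P \), \( T_Q=\conv\{d_1^*,d_2^*,d_3^*\}\subseteq Q \) with \( [d_1^*,d_2^*] \) a translate of \( [c_1^*,c_2^*] \) by \( x \), with \( \inspar(P,x)=\conv([c_1^*,c_2^*]\cup F_{PQ}) \), \( \inspar(Q,x)=\conv([d_1^*,d_2^*]\cup F_{PQ}) \), and with \( T_P \) a proper subset of \( T_Q+(c_2^*-d_2^*) \), which shares the edge \( [c_1^*,c_2^*] \) with \( T_P \). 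Since \( F_{PQ} \) is common to \( P \) and \( Q \), the cap terms \( \len_B(\thecap(\cdot,-x)) \) in Theorem~\ref{radial deriv of g perim} are equal for \( P \) and for \( Q \); cancelling them and using \( \perim_B(T)=2\wid(T,z) \) gives
\[
	-\LeftPartialDerivAt{\bigl(\wcov{Q}(tx)-\wcov{P}(tx)\bigr)}{t}{1}=\wid(T_Q,z)-\wid(T_P,z)\ge 0 .
\]
The crux is to make this inequality strict using \( F(S,v)\subset\intr(S_P\cup S_Q) \): the construction arranges \( E_P\subseteq Q \), which forces the supporting line of \( Q \) at \( a_i^+ \) strictly outside that of \( P \), so the apex \( d_3^* \) of \( T_Q \) is \( v \)-farther from the base \( [c_1^*,c_2^*] \) than the apex \( c_3^* \) of \( T_P \); and the hypothesis that the \( v \)-extreme line \( F(S,v) \) of \( R \), which is met by \( c_1 \), lies strictly inside \( S_P\cup S_Q \) provides exactly the vertical room for \( d_3^* \) to overshoot the \( z \)-range of \( [c_1^*,c_2^*] \) while \( c_3^* \), being interior to \( T_Q+(c_2^*-d_2^*) \), does not. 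Hence \( \wid(T_P,z)<\wid(T_Q,z) \), so \( -\LeftPartialDerivAt{\wcov{P}(tx)}{t}{1}<-\LeftPartialDerivAt{\wcov{Q}(tx)}{t}{1} \), contradicting \( \wcov{P}=\wcov{Q} \). Therefore \( \len(c_1)\le\len(c_2) \), which proves (I). I expect this strictness step to be the main obstacle: the failure of strict monotonicity of the width means the inclusion \( T_P\subsetneq T_Q+(c_2^*-d_2^*) \) alone does not separate the two widths, and recovering strictness demands the delicate local analysis near \( a_i^+ \) sketched above — which is precisely why the hypotheses referring to \( \intr(S_P\cup S_Q) \) are imposed.

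Finally, (II) follows quickly from (I). If \( S\subset\intr(S_P\cup S_Q) \) then \( F(S,z) \) and \( F(S,-z) \) are both contained in \( \intr(S_P\cup S_Q) \), so (I) applies with \( v=z \) and with \( v=-z \). If \( F(S,z) \) and \( F(S,-z) \) meet different chords, (I) gives \( \len(c_1)\le\len(c_2) \) and \( \len(c_2)\le\len(c_1) \), whence~\eqref{chords_equal_length}. If instead they meet the same chord \( c_j \), then \( c_j \) attains both the greatest and the least value of \( \sprod{\cdot}{z} \) on \( R \), so the \( z \)-extent of \( c_j \) equals that of \( R \), hence is at least that of the parallel chord \( c_{3-j} \); since \( c_1\parallel c_2\parallel u \) and — apart from the easily handled case \( u\perp z \) — the \( z \)-extent of a segment parallel to \( u \) is proportional to its length, this yields \( \len(c_j)\ge\len(c_{3-j}) \), which together with the inequality of (I) again gives~\eqref{chords_equal_length}.
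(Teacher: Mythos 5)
Your overall skeleton does match the paper's: the incidence part of (I) is argued exactly as in the paper (a $v$-extreme point of $R$ lying in the relative interior of one of the two arcs would force $F(S,v)$ to support both $P$ and $Q$, contradicting~\eqref{interior_to_union_of_strips}); the length comparison is sought, as in the paper, by rerunning the construction of Lemma~\ref{symmetric arcs lem} with Lemma~\ref{lem:curv_info_width} in place of Lemma~\ref{curv info lem} and feeding the two triangles into Theorem~\ref{radial deriv of g perim}; and your treatment of (II), including the case in which $F(S,z)$ and $F(S,-z)$ meet the same chord, is sound and in fact more explicit than the paper's one-line conclusion.

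The genuine gap is exactly the step you flag yourself: passing from the strict inclusion of one triangle in a translate of the other to $\wid(T_P,z)\neq\wid(T_Q,z)$. Because the width is not strictly monotone, this is the whole content of the lemma, and your justification does not establish it. You assert that the inclusion makes $d_3^*$ ``$v$-farther from the base'' than $c_3^*$; but the inclusion only says that $d_3^*+(c_1^*-d_1^*)$ lies farther from $c_1^*$ than $c_3^*$ does \emph{along the common side-line} $\aff(E_{PQ})$, which says nothing about $z$-distances unless one knows that this line ascends strictly in direction $z$ as one moves away from the base. Your further claim that~\eqref{interior_to_union_of_strips} ``provides the vertical room for $d_3^*$ to overshoot the $z$-range of $[c_1^*,c_2^*]$ while $c_3^*$, being interior to $T_Q+(c_2^*-d_2^*)$, does not'' is unsubstantiated and, as stated, incorrect: a point interior to the larger triangle can perfectly well lie above the $z$-range of the common base, and in the correct analysis the apex $c_3^*$ does lie above it. What the paper actually does here is quantitative: choose coordinates with $F(S,z)$ equal to the $x$-axis; hypothesis~\eqref{interior_to_union_of_strips} then yields $\alpha>0$ such that the line $\{p:\sprod{p}{z}=\alpha\}$ supports $P$ or $Q$; using that the side-lines of the triangles support $P$ (respectively $Q$), one gets $\sprod{c_3^*}{z}>\alpha$, while the base vertices, chosen close to a chord contained in $S$, satisfy $\sprod{c_1^*}{z}<\alpha$ and $\sprod{c_2^*}{z}<\alpha$. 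Hence the common side-line through the two apexes climbs strictly in direction $z$, so strict containment forces the two apex heights, and therefore the two widths, to differ (in whichever direction the containment goes, which already contradicts $\wcov{P}=\wcov{Q}$ via Theorem~\ref{radial deriv of g perim}). Producing this $\alpha$ and locating the apexes strictly above it and the bases strictly below it is the essential use of~\eqref{interior_to_union_of_strips}; it is absent from your proposal, so the proof of (I), and with it (II), is incomplete.
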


\begin{figure}
\begin{center}
\input{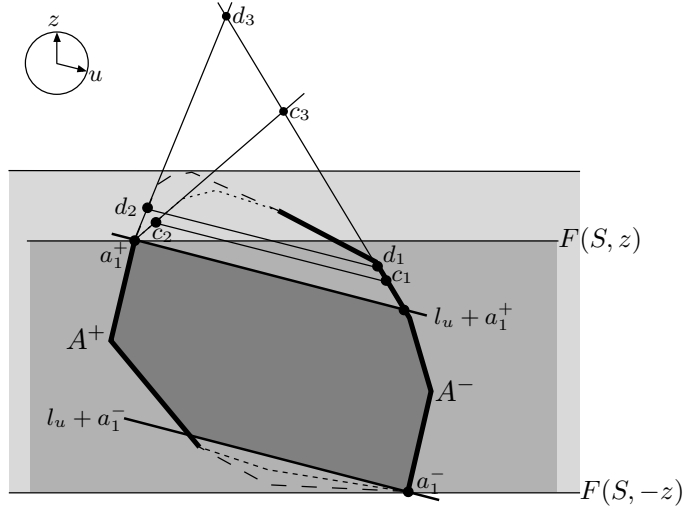}
\end{center}
\caption{The convex envelope of the sub-arcs (dark gray), the strips \( S \) (medium gray) and  \( S_P\cup S_Q \) (light gray). In this example~\eqref{interior_to_union_of_strips} holds when \( v=z \) and it does not hold when \( v=-z \).}
\label{fig:w_arcs}
\end{figure}
\begin{proof}
In order to prove~\eqref{cla:w_symmetric_arcs_I}, assume that \eqref{interior_to_union_of_strips} holds with \( v=z \). The line \( F(S,z) \) intersects one of the two chords in the statement because otherwise it intersects \( \conv\big([a_i^+,(l_u+a_i^-)\cap A^+]_{A^+}\cup[a_i^-,(l_u+a_i^+)\cap A^-]_{A^-}\big) \) at some point \( y\in\relint[a_i^+,(l_u+a_i^-)\cap A^+]_{A^+}\cup\,\relint[a_i^-,(l_u+a_i^+)\cap A^-]_{A^-} \). The convexity of the involved sets implies then that \( F(S,z) \) supports both \( P \) and \( Q \) at \( y \) and this contradicts~\eqref{interior_to_union_of_strips}.

Assume
\begin{equation}\label{def_intersection}
F(S,z)\cap[a_i^+,(l_u+a_i^+)\cap A^-]\neq\emptyset.
\end{equation}
 Let \( r^+=\len\big([a_i^+,(l_u+a_i^+)\cap A^-]\big) \), \( r^-=\len\big([a_i^-,(l_u+a_i^-)\cap A^+]\big) \) and assume \( r^+>r^- \). To prove that this inequality implies a contradiction, we follow closely the proof of Lemma~\ref{symmetric arcs lem}. Let \( c_i \) and \( d_i \), for \( i=1,2,3 \), be as in the proof of Lemma~\ref{symmetric arcs lem} (see Fig.~\ref{fig:w_arcs}). We recall some properties of these points.
\begin{enumerate}
 \item\label{inclusion_triangles} The triangles \( \conv\{c_1,c_2,c_3\} \) and \( \conv\{d_1,d_2,d_3\}+(c_1-d_1) \) are one strictly contained in the other and  have the  edge \( [c_1,c_2] \) in common.
 \item\label{lines_support}  The lines \( \aff([c_1,c_3]) \) and \( \aff([d_1,d_3]) \) coincide and support both \( P \) and \( Q \). The line \( \aff([c_2,c_3]) \) supports \( P \) and \( \aff([d_2,d_3]) \)  supports \( Q \).
 \item\label{arbit_close} Both \( [c_1,c_2] \) and  \( [d_1,d_2] \) can be chosen arbitrarily close to \( [a_i^+,(l_u+a_i^+)\cap A^-] \).
\end{enumerate}
We prove that
\begin{equation}\label{different_widths}
 \wid(\conv\{c_1,c_2,c_3\},z)\neq \wid(\conv\{d_1,d_2,d_3\},z).
\end{equation}
Choose a Cartesian coordinate system so that \( z=(0,1) \) and \( F(S,z) \) coincides with the \( x \)-axis.
It is evident that, given any \( p_1 \), \( p_2 \) and \( p_3\in\real^2 \), we have
\[
\wid(\conv\{p_1,p_2,p_3\},z)=\max\big(|\sprod{p_3-p_1}{z}|, |\sprod{p_3-p_2}{z}|, |\sprod{p_2-p_1}{z}|\big).
\]
The assumption $F(S,v)\subset\intr(S_P\cup S_Q)$ implies the existence of \( \alpha>0 \) such that the line \( l=\{p\in\real^2 : \sprod{p}{z}=\alpha\} \) supports \( P \) or \( Q \). Assume that \( l \) supports \( P \). 
Condition~\eqref{lines_support} and the convexity of \( P \) imply \( \sprod{c_3}{z}>\alpha \). 
On the other hand, \eqref{arbit_close} and the inclusion \( [a_i^+,(l_u+a_i^+)\cap A^-]\subset S \) imply \( \sprod{c_1}{z} <\alpha \) and \(\sprod{c_2}{z} <\alpha\). As a consequence we have \( \sprod{c_3-c_1}{z}>0 \), \( \sprod{c_3-c_2}{z}>0 \) and
\begin{equation}\label{form_width}
\wid(\conv\{c_1,c_2,c_3\},z)=\max\big( \sprod{c_3-c_1}{z}, \sprod{c_3-c_2}{z}\big).
\end{equation}
If \( \conv\{d_1,d_2,d_3\}+(c_1-d_1) \)  strictly contains \( \conv\{c_1,c_2,c_3\} \), then a formula similar to \eqref{form_width} holds for \( \wid(\conv\{d_1,d_2,d_3\}) \) and, moreover,
\[
\sprod{d_3+(c_1-d_1)}{z}>\sprod{c_3}{z}.
\]
This implies \( \wid(\conv(d_1,d_2,d_3),z)>\wid(\conv(c_1,c_2,c_3),z) \). If \( \conv(d_1,d_2,d_3)+(c_1-d_1) \)  is strictly contained in  \( \conv(c_1,c_2,c_3) \) then we have \( \sprod{d_3+(c_1-d_1)}{z}<\sprod{c_3}{z} \). This implies \( \wid(\conv(d_1,d_2,d_3),z)<\wid(\conv(c_1,c_2,c_3),z) \).  This concludes the proof of~\eqref{different_widths} when \( l \) supports \( P \). When \( l \) supports \( Q \), the proof is similar.

Let \( x=c_1-c_2 \).
In view of  Theorem~\ref{radial deriv of g perim}, we have
\begin{multline*}
-\left. \pderivleft{t} \wcov{P}(t x) \right|_{t=1}+\left. \pderivleft{t} \wcov{Q}(t x) \right|_{t=1}=  \\ =\wid(\conv\{c_1,c_2,c_3\},z)-\wid(\conv\{d_1,d_2,d_3\},z)\neq0
\end{multline*}
This contradicts \( \wcov{P}=\wcov{Q} \) and proves  \(r^+\leq r^- \) and~\eqref{cla:w_symmetric_arcs_I}.

In order to prove~\eqref{cla:w_symmetric_arcs_II} we observe that the assumption \( S\subset\intr(S_P\cup S_Q) \) implies that~\eqref{def_intersection} holds both when $v=z$ and when $v=-z$. Since \( F(S,z) \) and \( F(S,-z) \) intersect different chords, the lengths of these chords are equal, by~\eqref{cla:w_symmetric_arcs_I}.
\end{proof}

\begin{lemma}\label{lem:w_symmetric_arcs_conclusion}
Let \( P \), \( Q \), \( A^+ \), \( A^- \) and \(u_0\) be  as in Lemma~\ref{symmetric arcs lem}.
Let \( S_P \)  and \( S_Q \) denote the minimal strips orthogonal to \( z \) and containing \( P \) and \( Q \), respectively.
Assume that  neither \( A^+ \) nor \( A^- \) are points or segments.
\begin{enumerate}[(I)]
\item\label{cla:w_symmetric_arcs_conclusion_I}If \( S_P\neq S_Q \) then \( A^+ \) is a reflection of \( A^- \).
\item\label{cla:w_symmetric_arcs_conclusion_II} Assume \( S_P=S_Q \). If $\relint A^+\subset\intr S_P$ then \( A^+ \) contains a reflection of \( A^- \) or \( A^- \) contains a reflection of \( A^+ \). If $\relint A^+\cap\bd S_P\neq\emptyset$ then each component of \( A^+\cap\intr S_P \) is a reflection of  a component of \( A^-\cap\intr S_P \).
\end{enumerate}
\end{lemma}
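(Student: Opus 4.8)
The goal is to prove Lemma~\ref{lem:w_symmetric_arcs_conclusion}, which is the width-covariogram substitute for Lemma~\ref{symmetric arcs lem}. The overall idea is the same as in Lemma~\ref{symmetric arcs lem}: exploit the ``chord transport'' argument to show that $(l_u+a_i^+)\cap A^-$ and $(l_u+a_i^-)\cap A^+$ are reflections of each other across the relevant midpoint, and then patch these local reflection statements together. The crucial new feature is that the strip information of Lemma~\ref{lem:w_symmetric_arcs} only gives the equality of chord lengths \eqref{chords_equal_length} when the relevant strip $S$ sits strictly inside $\intr(S_P\cup S_Q)$; so the whole proof has to be organized around where $A^\pm$ meet the boundary lines $F(S_P,\pm z)$ and $F(S_Q,\pm z)$.

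First I would prove \eqref{cla:w_symmetric_arcs_conclusion_I}. Assume $S_P\neq S_Q$; then $S_P\cup S_Q$ is a strip strictly larger than either of them, so its interior contains the boundary line of $S_P$ on (at least) one side and the boundary line of $S_Q$ on (at least) one side — in particular $\intr(S_P\cup S_Q)$ contains all of $P$ and $Q$ except possibly two boundary lines, one of which supports $P$ only and one of which supports $Q$ only. Consequently, for \emph{every} admissible $u$ and $i$ as in Lemma~\ref{lem:w_symmetric_arcs}, the strip $S$ spanned by the two sub-arcs is contained in $\intr(S_P\cup S_Q)$ (its boundary lines, being chords of $A^\pm$, cannot be a common supporting line of $P$ and $Q$, and at most one of them can support $P$ or $Q$ at all, and that one is in the interior of $S_P\cup S_Q$). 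Hence Lemma~\ref{lem:w_symmetric_arcs}\eqref{cla:w_symmetric_arcs_II} applies unconditionally and gives \eqref{chords_equal_length} for all such $u$. Then I would run verbatim the end of the proof of Lemma~\ref{symmetric arcs lem}: the equality of all these chord lengths forces $(l_u+a_1^+)\cap A^-$ and $(l_u+a_1^-)\cap A^+$ to be symmetric about $(a_1^++a_1^-)/2$ for every $u$, so one of $A^+$, $A^-$ contains the reflection of the other about that midpoint; and the non-parallelism argument with the second endpoint pair (using that $A^+$ is a convex arc, not a segment) rules out proper containment, giving equality.

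Next, \eqref{cla:w_symmetric_arcs_conclusion_II}, where $S_P=S_Q=:S_0$. Now $\intr(S_P\cup S_Q)=\intr S_0$, and the two boundary lines $F(S_0,z)$, $F(S_0,-z)$ support \emph{both} $P$ and $Q$; so a sub-arc strip $S$ fails to be in $\intr S_0$ exactly when one of its bounding lines lies on $\bd S_0$, i.e.\ when one of the chords $[a_i^\pm, \cdot]$ has an endpoint on $\bd S_0$. If $\relint A^+\subset\intr S_0$ (and hence, by Lemma~\ref{lem:curv_info_width} together with \eqref{faces of diff body}, also $\relint A^-\subset\intr S_0$, since the normal fans of $A^\pm$ are reflections and $z,-z$ are not among the outer normals along $\relint A^+$), then every admissible $S$ lies in $\intr S_0$, Lemma~\ref{lem:w_symmetric_arcs}\eqref{cla:w_symmetric_arcs_II} again applies for all $u$, and the same argument as in part (I) — but now only up to containment, since we no longer have a second ``global'' endpoint pair outside the strip to force equality — gives that $A^+$ contains a reflection of $A^-$ or vice versa. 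For the case $\relint A^+\cap\bd S_0\neq\emptyset$: each component of $A^+\cap\intr S_0$ is a maximal open sub-arc of $A^+$ whose closure has endpoints either at an endpoint of $A^+$ or on $\bd S_0$. I would apply part (I)'s local argument to each such component against the corresponding component of $A^-\cap\intr S_0$ (they correspond because the normal fans of $A^+$ and $A^-$ are reflections of each other by \eqref{lem_curv_info_aa}, which holds here since $A^\pm$ are maximal arcs in $\bd P\cap\bd Q$ with a vertex of $P$ in the relative interior), using Lemma~\ref{lem:w_symmetric_arcs}\eqref{cla:w_symmetric_arcs_I}–\eqref{cla:w_symmetric_arcs_II} restricted to $u$ for which the spanned strip still lies in $\intr S_0$; this time, because each component \emph{does} have both its endpoints either at an endpoint of $A^+$ or on $\bd S_0$ (a point shared by $P$ and $Q$), the non-parallelism/endpoint bookkeeping from Lemma~\ref{symmetric arcs lem} can be rerun on each component to upgrade ``contains a reflection of'' to ``is a reflection of''.

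\textbf{Main obstacle.} The delicate point is the boundary case $\relint A^+\cap\bd S_0\neq\emptyset$: one must be careful that a ``component of $A^\pm\cap\intr S_0$'' genuinely has both its endpoints pinned (either to an endpoint of the maximal arc, which is shared by $P$ and $Q$, or to a point of $\bd S_0$, which supports both $P$ and $Q$ and hence is again shared), so that the reflection map matching a component of $A^+$ to one of $A^-$ is forced to be an honest point reflection (not merely a containment), and that $F(P,\pm z)$, $F(Q,\pm z)$ interact correctly with these components — this is exactly where part \eqref{cla:w_symmetric_arcs_I} of Lemma~\ref{lem:w_symmetric_arcs}, with its asymmetric conclusion, must be used rather than part \eqref{cla:w_symmetric_arcs_II}, and where the bookkeeping with the cones $C_i,D_i$ from Lemma~\ref{lem:curv_info_width} has to be threaded through carefully. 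I expect the argument in this case to require splitting on how many components $A^+\cap\intr S_0$ has and on whether the endpoints $a_i^\pm$ themselves lie on $\bd S_0$, but each sub-case reduces to the already-established local symmetry statement plus the convex-arc non-parallelism trick.
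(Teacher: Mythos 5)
Your part (I) and the first sub-case of part (II) follow essentially the paper's route (for (I) the paper gets $S\subset S_P\cap S_Q\subset\intr(S_P\cup S_Q)$ directly from the fact that $\wcov{P}(o)=\wcov{Q}(o)$ forces $S_P$ and $S_Q$ to have equal width, which is cleaner than your supporting-line discussion but amounts to the same thing), and these portions are sound. There is, however, a genuine gap in your treatment of the case $\relint A^+\cap\bd S_P\neq\emptyset$. You propose to ``rerun the non-parallelism/endpoint bookkeeping from Lemma~\ref{symmetric arcs lem} on each component to upgrade `contains a reflection of' to `is a reflection of'\,''. That bookkeeping requires anchoring the chord argument at \emph{both} endpoint pairs of the arcs being compared: one shows a reflection about $(a_1^++a_1^-)/2$ and a reflection about $(a_2^++a_2^-)/2$ and plays them against each other. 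For a component of $A^+\cap\intr S_P$ whose right endpoint $b^+$ lies on $\bd S_P$, the second anchor is unavailable: any strip $S$ spanned by sub-arcs reaching up to $b^+$ (or to the corresponding $b^-$) has a bounding line on $\bd S_P=\bd(S_P\cup S_Q)$, so condition~\eqref{interior_to_union_of_strips} fails for one of $v=\pm z$ and Lemma~\ref{lem:w_symmetric_arcs}~\eqref{cla:w_symmetric_arcs_II} does not apply there; at best one gets the one-sided inequality of part~\eqref{cla:w_symmetric_arcs_I} for a single $v$. So the anchored-at-$a_1$ argument only shows that $[a_1^+,b^+]_{A^+}$ reflects onto $[a_1^-,c^-]_{A^-}$ for some $c^-$ that could a priori lie strictly before $b^-$, and your sketch gives no mechanism to force $c^-=b^-$.

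The paper closes exactly this hole with an argument you do not supply: rotate $u$ counterclockwise from the direction of $a_1^--a_1^+$ until the first direction $w$ at which $l_u+a_1^-$ hits $b^+$ or $l_u+a_1^+$ hits $b^-$; assuming $c^-\neq b^-$, take $v$ slightly past $w$, observe that the relevant bounding line of the new strip still lies in $\intr S_P$ so that Lemma~\ref{lem:w_symmetric_arcs}~\eqref{cla:w_symmetric_arcs_I} yields $\len([a_1^+,(l_v+a_1^+)\cap A^-])\le\len([a_1^-,(l_v+a_1^-)\cap A^+])$, and then contradict this because the reflection $d^-$ of $(l_v+a_1^-)\cap A^+$ lands in $\intr\conv\{a_1^+,c^-,b^-\}\subset\intr P$, forcing the strictly opposite inequality. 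Separately, you also assert without proof the correspondence of components (equivalently, that $a_1^+\in\intr S_P$ iff $a_1^-\in\intr S_P$, and that neither arc meets both lines of $\bd S_P$); the paper needs a nontrivial argument for this, using the supporting parallelogram built from the edges at $a_1^\pm$ together with Lemma~\ref{lem:curv_info_width}, and the normal-fan reflection alone does not settle the behaviour at the endpoints $a_i^\pm$, where only ``half'' of each normal cone is visible from the arc.
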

\begin{proof}
Assume \( S_P\neq S_Q \).  The equality \( \wcov{P}(o)=\wcov{Q}(o) \) implies that \( S_P \) and \( S_Q \) have the same width in direction \( z \). Thus \( S_P\neq S_Q \) implies
\begin{equation}\label{sp_different_sq}
 S_P\cap S_Q\subset\intr(S_P\cup S_Q).
\end{equation}
Since \( S\subset S_P\cap S_Q \), Lemma~\ref{lem:w_symmetric_arcs} implies
\begin{equation}\label{equality_lengths_chords}
\len\left(  [a_1^+,(l_u+a_1^+)\cap A^-]\right)=\len\left(  [a_1^-,(l_u+a_1^-)\cap A^+]\right).
\end{equation}
The validity of this equality for each \( u\in\ucircle \) such that \( l_u+a_1^+ \) intersects \( \relint A^- \) and \( l_u+a_1^- \) intersects \( \relint A^+ \) implies that a sub-arc of \( A^+ \) is a reflection of \( A^- \) with respect to \( (a_1^++a_1^-)/2 \), or that the same hold with \( A^+ \) and \( A^- \) exchanged. A similar property can be proved for the symmetry with respect to \( (a_2^++a_2^-)/2 \). The two symmetries, together with the assumption that \( A^+ \) and \( A^- \) are not parallel segments, imply that \( A^+ \) is a reflection of \( A^- \). This proves~\eqref{cla:w_symmetric_arcs_I}.

Assume \( S_P=S_Q \). 
Arguing as we have done in the  proof of Lemma~\ref{symmetric arcs lem} we may prove that, for \( i \in \{1,2\}\), the segment of \( A^+ \) whose endpoint is  \( a_i^+ \) is parallel to the segment of \( A^- \) whose endpoint is \( a_i^- \).

Let $i\in\{1,2\}$ and let  us prove that
\begin{equation}\label{lem:w_symmetric_arcs_conclusion_a}
 a_i^+\in\intr S_P\quad\text{  if and only if }\quad a_i^-\in\intr S_P.
\end{equation}
Assume \( a_1^+\in\intr S_P \).
The segment contained in \( A^+ \)  whose endpoint is  \( a_1^+ \) and the one contained in $A^-$ whose endpoint is  \( a_1^- \) are not orthogonal to \( z \) because otherwise the lines containing them  define a strip containing $P$ and strictly contained in $S_P$, contradicting the definition of $S_P$.
Thus the lines through these segments define a strip  which intersects \( S_P \) in a parallelogram $E$ containing and supporting both \( P \) and \( Q \).
Let \( E_i \), \( i \in \{1,2,3,4\} \), denote the edges of this parallelogram, in counterclockwise order, with \( E_2\subset F(S_P,z) \) and \( E_4\subset F(S_P,-z) \).
Up to a reflection of $P$ and $Q$,
 we may assume \( a_1^+\in E_1 \) and \( a_1^-\in E_3 \).
Since \( E_3 \) contains a segment of \( A^- \) whose left endpoint is \( a_1^- \), we have \( a_1^-\neq E_3\cap E_4 \). Let us prove
\begin{equation}\label{lem:w_symmetric_arcs_conclusion_ac}
 a_1^-\neq E_2\cap E_3.
\end{equation}
Assume \eqref{lem:w_symmetric_arcs_conclusion_ac} false.
Let $w\in\ucircle$ be an outer normal to the parallelogram $E$ at $E_3$. We have
\begin{equation}\label{lem:w_symmetric_arcs_conclusion_ad}
 z, w\in N(P,a_1^-)\cap N(Q,a_1^-),
\end{equation}
because \( a_1^-\in E_2\subset F(S_P,z) \) and because $E_3$ supports both $P$ and $Q$ at \( a_1^-\).
The  cones \( N(P,a_1^-) \) and \( N(Q,a_1^-) \) are different, because $P$ and $Q$ are polygons which differ in every neighborhood of \( a_1^- \). Lemma~\ref{lem:curv_info_width} implies the existence of a vertex $b$ of $P$ and $Q$ such that
\begin{equation}\label{lem:w_symmetric_arcs_conclusion_ab}
 N(P,b)=-N(Q,a_1^-) \quad\text{and}\quad N(Q,b)=-N(P,a_1^-).
\end{equation}
Conditions~\eqref{lem:w_symmetric_arcs_conclusion_ad} and \eqref{lem:w_symmetric_arcs_conclusion_ab} imply  \[
-z, -w\in N(P,b)\cap N(Q,b).
\]
This implies $b\in E_1\cap E_4$.
Since $a_1^+$ is the left endpoint of a segment contained in $\bd P\cap \bd Q\cap E_1$, we have $a_1^+=b$. This contradicts the assumption \( a_1^+\in\intr S_P \), proves ~\eqref{lem:w_symmetric_arcs_conclusion_ac} and one of the implications of~\eqref{lem:w_symmetric_arcs_conclusion_a} when $i=1$.
The proof of the other implication and that of~\eqref{lem:w_symmetric_arcs_conclusion_a} when $i=2$ are completely analogous.

We observe that neither \( A^+ \) nor \( A^- \) intersect both lines bounding \( S_P \).
Indeed, if this is false then we have $F(P,v)=F(Q,v)$ for each $v\in(-z,z)_{\ucircle}$ or for each $v\in(z,-z)_{\ucircle}$. In each case this property and $DP=DQ$ imply \( P=Q \), by~\eqref{faces of diff body},  which contradicts the assumptions of the lemma.
We may thus assume \( a_i^-, a_i^+\in\intr S_P \), for some $i\in\{1,2\}$, say for $i=1$.

Assertion~\eqref{lem:w_symmetric_arcs_conclusion_a} together with the parallelism of the segment of \( A^+ \) whose endpoint is  \( a_2^+ \) and  the segment of \( A^- \) whose endpoint is \( a_2^- \), imply that
\[
A^+\cap\bd S_P=\{a_2^+\}\quad\text{ if and only if }\quad A^-\cap\bd S_P= \{a_2^-\}.
\]
We are thus in one of the following cases:
\begin{enumerate}[(i)]
\item\label{lem:w_symmetric_arcs_conclusion_b} \( A^+ \subset \intr S_P \) and \( A^-\subset \intr S_P \);
\item\label{lem:w_symmetric_arcs_conclusion_c} \( A^+\setminus\{a_2^+\}\subset\intr S_P \), \( A^-\setminus\{a_2^-\}\subset\intr S_P \)  and \( a_2^+,a_2^-\in\bd S_P \);
\item \label{lem:w_symmetric_arcs_conclusion_d} both \( \relint A^+ \) and \( \relint A^- \) intersects \( \bd S_P \).
\end{enumerate}

Arguments similar to those used to prove Assertion~\eqref{cla:w_symmetric_arcs_conclusion_I} of this lemma prove that~\eqref{lem:w_symmetric_arcs_conclusion_b} implies that \( A^+ \) is a reflection of \( A^- \),  while~\eqref{lem:w_symmetric_arcs_conclusion_c} implies that either a reflection of \( A^+ \) is contained in \( A^- \) or a reflection of \( A^- \) is contained in \( A^+ \). 

It remains to deal with Case~\eqref{lem:w_symmetric_arcs_conclusion_d}. We prove that in this case the component of \( A^+\cap\intr S_P \) containing \( a_1^+ \) is a reflection of  the component of \( A^-\cap\intr S_P \) containing \( a_1^- \). The corresponding result for the components containing \( a_2^+ \) and \( a_2^- \) is proved similarly.

Let \( b^+ \) (and let \( b^- \)) be the right endpoint of the component of \( A^+\cap\intr S_P \) containing $a_1^+$ (and of the component of \( A^-\cap\intr S_P \) containing $a_1^-$, respectively). We have \( b^+ \), \( b^-\in\bd S_P \). Start with \( u\in\ucircle \) equal to the direction \( v \) of \( a_1^--a_1^+ \) and increase \( u \) in counterclockwise direction. If \( u \) is close to \( v \) then
\begin{equation}\label{intersection_relint_arcs}
 (l_u+a_1^-)\cap\relint \big([a_1^+,b^+]_{A^+}\big)\neq\emptyset\quad\text{and}\quad(l_u+a_1^+)\cap\relint \big([a_1^-,b^-]_{A^-}\big)\neq\emptyset.
\end{equation}
If the strip $S$ is defined as in the statement of Lemma~\ref{lem:w_symmetric_arcs}, with \( i=1 \), then $S\subset \intr S_P$. By Lemma~\ref{lem:w_symmetric_arcs}, we have~\eqref{equality_lengths_chords}.  When we increase \( u \), the conditions \eqref{intersection_relint_arcs} are valid until \( b^+\in l_u+a_1^- \) or \( b^-\in l_u+a_1^+ \). Let \( w \) be the first \( u \) such that this happens, and assume, without loss of generality,  \( b^+\in l_w+a_1^- \). Let \( c^-=(l_w+a_1^+)\cap A^- \). We have \( c^-\in [a_1^-,b^-]_{A^-} \) and  \( [a_1^+,b^+]_{A^+} \) is a reflection of \( [a_1^-,c^-]_{A^-} \) with respect to \( (a_1^++a_1^-)/2 \). To conclude the proof it suffices to show that  \( c^-=b^- \). Assume the contrary, that is, assume \( c^-\in (a_1^-,b^-)_{A^-} \), and let \( v\in S^1 \) follow \( w \) in counterclockwise order and be so close to \( w \) so that
\begin{gather}
 (a_1^++l_v)\cap (c^-,b^-)_{A^-}\neq\emptyset, \label{za}\\
 (a_1^-+l_v)\cap (b^+,a_2^+)_{A^+}\neq\emptyset.\label{zb}
\end{gather}
Let \( S \) be defined as in the statement of Lemma~\ref{lem:w_symmetric_arcs}, with \(i=1\) and \( u=v \). Condition \eqref{za} implies that the line through \( (a_1^++l_v)\cap A^- \) and bounding \( S \) is contained in \( \intr S_P \). Therefore Lemma~\ref{lem:w_symmetric_arcs}~\eqref{cla:w_symmetric_arcs_I} implies
\begin{equation}\label{inequality_lengths_chords1}
\len\left(  [a_1^+,(l_v+a_1^+)\cap A^-]\right)\leq\len\left(  [a_1^-,(l_v+a_1^-)\cap A^+]\right).
\end{equation}
Let \( d^- \) be the reflection of \( (l_v+a_1^-)\cap A^+ \) with respect to \( (a_1^++a_1^-)/2 \). We have \( d^-\in l_v+a_1^+ \) and 
\begin{equation}\label{inequality_lengths_chords2}
\len\left(  [a_1^-,(l_v+a_1^-)\cap A^+]\right)=\len\left(  [a_1^+,d^-]\right).
\end{equation}
Simple geometric considerations imply that we also have
\( d^-\in\intr \conv\{a_1^+,c^-,b^-\} \) when \( v \) is sufficiently close to \( w \). Thus \( d^-\in\intr P \). This implies
\[
\len\left(  [a_1^+,d^-]\right)<\len\left( [a_1^+,(l_v+a_1^+)\cap A^-]\right).
\]
This inequality and \eqref{inequality_lengths_chords2} contradict \eqref{inequality_lengths_chords1}.
\end{proof}

\begin{proof}[Proof of Theorem~\ref{retrieval for width-covar thm}]
Let \( P \) be a planar convex polygon and let \( Q \) be a planar convex body with \( \wcov{P}=\wcov{Q} \).
Since \( DP=DQ=\supp \wcov{P} \) (by Lemma~\ref{g phi thm}~\eqref{g phi thm_a})
and \( P \) is a polygon, \( DQ \) and hence \( Q \) must also be polygons.
We shall prove that \( P=Q \), up to translations and reflections. Assume the contrary.

Let \( a \) and \( b \) be {opposite} vertices of \( P \), that is,
\[
\intr N(P,a)\cap(-\intr N(P,b))\neq\emptyset.
\]
By Lemma~\ref{lem:curv_info_width} and \( DP=DQ \)
we may assume, after a translation and reflection of \( Q \), if necessary,
that \( a \) and \( b \) are also vertices of \( Q \), and moreover
\( N(P,a)=N(Q,a) \) and \( N(P,b)=N(Q,b) \).
We show that when 
\begin{equation}\label{p_in_interior_of_strip}
\quad a\in\intr S_P\quad\text{or}\quad b\in\intr S_P
\end{equation}
then
\begin{equation}\label{opposite_cones_are_equal}
 N(P,a)=-N(P,b)=N(Q,a)=-N(Q,b).
\end{equation}
Assume~\eqref{p_in_interior_of_strip} and, say, \( a\in\intr S_P \). We apply Lemma~\ref{lem:w_symmetric_arcs_conclusion} with $A^+$ (and $A^-$) the maximal arc in $\bd P\cap \bd Q$ containing $a$ (containing $b$, respectively) and $u_0\in \intr N(P,a)\cap-\intr N(P,b)\cap\ucircle$. 
Neither \( A^+ \) nor  \( A^- \) are points, segments or are contained in the boundary of  $S_P$.
According to which conclusion of Lemma~\ref{lem:w_symmetric_arcs_conclusion}  holds true we have the following discussion. 
When \( A^- \) contains a reflection of \( A^+ \), and (since \( a\in\intr S_P \)) also when  each component of \( A^-\cap\intr S_P \) is a reflection of  a component of \( A^+\cap\intr S_P \), then \( \relint A^- \) contains a vertex \( c \) with $-u_0\in \intr N(P,c)$. Since $-u_0\in \intr N(P,b)$,  we have \( c=b \). When \( A^+ \)  contains a reflection of \( A^- \), then \( \relint A^+ \) contains a vertex \( d \) with $u_0\in \intr N(P,d)$. We conclude as before that \( d=a \).
In every case \( a \) and \( b \) are in the relative interior of symmetric arcs and this implies~\eqref{opposite_cones_are_equal}.

When there is no pair of opposite vertices \( a \) and \( b \) of \( P \) satisfying~\eqref{p_in_interior_of_strip} then \( P = \conv ( F(P,z) \cup F(P,-z) ) \).  By Lemma~\ref{lem:curv_info_width} and \( DP=DQ \), there is a translation and reflection of \( Q \) such that \( F(P,z)=F(Q,z) \) and  \( F(P,-z)=F(Q,-z) \). This implies \( P=Q \) and concludes the proof in this case.

When there are pairs of opposite vertices of \( P \) satisfying~\eqref{p_in_interior_of_strip}, the validity of~\eqref{opposite_cones_are_equal} for each such pair implies that  the edges of \( P \) nonorthogonal to \( z \) come in parallel pairs. 
Let \( a_1,\ldots,a_n, b_1,\ldots, b_n \) be  the vertices of \( P \) in counterclockwise order, with \( a_1,a_n, b_1 \) and \( b_n \) in \( \bd S_P \), all other vertices in \( \intr S_P \), and  \( [a_i,a_{i+1}] \) parallel to \( [b_i,b_{i+1}] \), \( i=1,\ldots,n-1 \). Note that \( a_1 \) may coincide with \( b_n \) and \( a_n \) may coincide with \( b_1 \). 
Let \( 2\leq i\leq n-2 \). As before,  after possibly a translation and a reflection of \( Q \), we may assume that \( [a_i,a_{i+1}] \) and \( [b_i,b_{i+1}] \) are also edges of \( Q \).
It is clear that both \( a_i \), \( b_i \) and \( a_{i+1} \), \( b_{i+1} \) are pairs of opposite vertices of \( P \). Since \( 1<i<n-2 \), these four vertices are contained in \( \intr S_P \).
This yields \( N(P,a_i)=-N(P,b_i)=N(Q,a_i)=-N(Q,b_i) \) and \( N(P,a_{i+1})=-N(P,b_{i+1})=N(Q,a_{i+1})=-N(Q,b_{i+1}) \).
Consequently  the boundaries of \( P \) and \( Q \) coincide also in a neighborhood of \( [a_i,a_{i+1}] \) and of \( [b_i,b_{i+1}] \).
Let $A^+$ (and $A^-$)  be the maximal arc in $\bd P\cap \bd Q$ containing $[a_i,a_{i+1}]$ (containing $[b_i,b_{i+1}]$, respectively) and $u_0\in \intr N(P,a_i)\cap-\intr N(P,b_i)\cap\ucircle$.
Each conclusion of Lemma~\ref{lem:w_symmetric_arcs_conclusion} implies that \( [a_i,a_{i+1}] \) is a reflection of \( [b_i,b_{i+1}] \). We remark that we use $[a_i,a_{i+1}]\subset\intr S_P$ in proving this claim.

We may assume, after possibly a translation and a reflection of \( Q \), that \( [a_1,a_{2}] \) and \( [b_1,b_{2}] \) are also edges of \( Q \).
What we have proved so far implies that 
\begin{equation*}\label{large_arcs}
[a_i,a_{i+1}] \quad \text{and}\quad [b_i,b_{i+1}],\quad i=1,\ldots,n-2
\end{equation*}
are edges both of $P$ and of $Q$. We are not able to conclude, in analogy to what we have done before, that \( \len([a_1,a_{2}])=\len([b_1,b_{2}]) \), because $a_1$, $b_1\in\bd S_P$ creates some difficulty in applying Lemma~\ref{lem:w_symmetric_arcs_conclusion}.
However, there is not enough freedom to have  $P\neq Q$. Indeed, by what we have proved so far and by  Lemma~\ref{lem:curv_info_width}, both $P$ and $Q$ have the following edges:  $[a_i,a_{i+1}]$ and $[b_i,b_{i+1}]$, $i=1,\ldots,n-2$, two edges parallel to $[a_{n-1},a_{n}]$  and zero, or one or two edges orthogonal to $z$ (according to whether $[a_n,b_1]$ and $[b_n,a_1]$ are edges or points). 
But there is only one convex polygon satisfying these conditions.
This implies \( P=Q \) and concludes the proof.
\end{proof}

\subsection{
	  Examples of nondetermination in dimension $n\geq3$
} 

\label{subsection nonuniqueness} 

Theorem~1.2 in~\cite{MR2108257} proves that, given $H \in \cK_0^\ell$ and $K \in \cK_0^m$, we have $g_{H \times K} = g_{H \times (-K)}$. It also proves that when neither $H$ nor $K$ are centrally symmetric then $H\times K$ is not a translation or a reflection of $H\times(-K)$. This construction allows to create pairs of convex bodies with equal covariogram which are not a translation or reflection of each other  in every dimension $n\geq4$.
Moreover these examples (together with their images under a linear map) are substantially the only known examples of nondetermination by the covariogram.
In the following theorem we show that the previous arguments extend directly to every valuation $\phi$ which is invariant with respect to the group of isometries of the Euclidean space $\real^n$.

\begin{theorem}\label{teo:nondetermination_dimension4}
	Let $K \in \cK_0^\ell$ and $H \in \cK_0^{m}$ and let $\phi : \cK^{\ell+m} \rightarrow \real$ be a valuation which is invariant with respect to the group of isometries of the Euclidean space $\real^n$.
	\begin{enumerate}[(I)]
	 	\item 
			\label{teo:nondetermination_dimension4_I} We have $\gphi{K \times H} = \gphi{K \times (-H)}$.
	 	\item 
			\label{teo:nondetermination_dimension4_II} For every $n\geq4$ there are pairs of convex bodies in $\real^n$ with equal $\phi$-covariogram which are not a translation or reflection of each other.
	\end{enumerate}
\end{theorem}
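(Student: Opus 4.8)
The plan is to mimic the covariogram argument from \cite{MR2108257}, replacing the volume by $\phi$ wherever it is used, and checking that the only two properties of the volume that are really needed --- namely that $K\cap(K+x)$ factors through a product when $K$ is itself a product, and that a reflection in one factor induces a congruence of the section --- survive for any isometry-invariant valuation.

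For part \eqref{teo:nondetermination_dimension4_I}, fix $x=(y,t)\in\real^\ell\times\real^m$. A direct computation (or the observation that $(A\times B)\cap((A'\times B')+ (y,t))=(A\cap(A'+y))\times(B\cap(B'+t))$) gives
\begin{equation}\label{eq:product_section}
	(K\times H)\cap\bigl((K\times H)+(y,t)\bigr) = \bigl(K\cap(K+y)\bigr)\times\bigl(H\cap(H+t)\bigr),
\end{equation}
and likewise with $H$ replaced by $-H$,
\begin{equation}\label{eq:product_section_minus}
	(K\times(-H))\cap\bigl((K\times(-H))+(y,t)\bigr) = \bigl(K\cap(K+y)\bigr)\times\bigl((-H)\cap(-H+t)\bigr).
\end{equation}
Now $(-H)\cap(-H+t) = -\bigl(H\cap(H-t)\bigr)$, so the two sections in \eqref{eq:product_section} and \eqref{eq:product_section_minus} differ only by applying the map $(\operatorname{id},-\operatorname{id})$ to the second factor (and a translation). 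The map $(u,v)\mapsto(u,-v)$ on $\real^\ell\times\real^m$ is an isometry of $\real^{\ell+m}$, so by the hypothesis on $\phi$ the value of $\phi$ on the two sections agrees; combining this with the translation invariance built into isometry invariance, we get $\gphi{K\times H}(y,t)=\gphi{K\times(-H)}(y,t)$ for every $(y,t)$, which is \eqref{teo:nondetermination_dimension4_I}. (One should note here that an isometry-invariant valuation on $\cK^n$ is in particular translation invariant, which is why we may absorb the translation.)

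For part \eqref{teo:nondetermination_dimension4_II}, given $n\geq4$ write $n=\ell+m$ with $\ell,m\geq2$, and pick $K\in\cK_0^\ell$, $H\in\cK_0^m$ neither of which is centrally symmetric --- e.g. simplices. By \eqref{teo:nondetermination_dimension4_I} the bodies $K\times H$ and $K\times(-H)$ have the same $\phi$-covariogram. It remains to show they are not related by a translation and/or a point reflection of $\real^n$. This is a purely geometric statement about products that does not involve $\phi$ at all, and it is exactly the content proved in \cite[Theorem~1.2]{MR2108257}: when neither $K$ nor $H$ is centrally symmetric, $K\times H$ is not a translate or point-reflection of $K\times(-H)$. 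So one simply cites that. (If a self-contained argument is wanted: a rigid product $A\times B$ of bodies of full dimension in complementary coordinate subspaces, each of which is not centrally symmetric, determines the unordered pair of factors up to translation among all isometries respecting the splitting; since $K$ is not congruent-by-reflection-in-a-point to itself while $-H$ is the reflection of $H$, no point reflection of $\real^n$ can carry $K\times H$ onto $K\times(-H)$ --- a point reflection would have to act as a point reflection on each factor, forcing $H$ to be centrally symmetric.)

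The only genuine obstacle is verifying that \eqref{eq:product_section_minus} really does reduce to \eqref{eq:product_section} up to an isometry of the \emph{full} space $\real^{\ell+m}$ rather than merely an isometry of the factor $\real^m$; but since a map acting as an isometry on each factor of an orthogonal product is an isometry of the product, this is immediate. Everything else --- \eqref{eq:product_section}, the identity $(-H)\cap(-H+t)=-(H\cap(H-t))$, and the non-centrally-symmetric case analysis --- is routine or already in the literature. I would therefore keep the write-up short: state \eqref{eq:product_section}, deduce \eqref{teo:nondetermination_dimension4_I} in two lines, and for \eqref{teo:nondetermination_dimension4_II} combine \eqref{teo:nondetermination_dimension4_I} with the geometric non-equivalence from \cite[Theorem~1.2]{MR2108257}.
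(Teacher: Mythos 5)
Your proposal is correct and takes essentially the same route as the paper: factor the section $(K\times H)\cap\bigl((K\times H)+(y,t)\bigr)$ as a product of sections, apply the isometry $(u,v)\mapsto(u,-v)$ together with translation invariance of $\phi$, and for part (II) invoke the purely geometric non-equivalence of $K\times H$ and $K\times(-H)$ from \cite{MR2108257} with non-centrally-symmetric factors and $\ell,m\ge2$. Only a minor slip in your optional parenthetical aside: a point reflection of $\real^n$ carrying $K\times H$ onto $K\times(-H)$ would force the \emph{first} factor to satisfy $2c_1-K=K$, i.e.\ it is $K$ (not $H$) that would have to be centrally symmetric; this does not affect the main argument, which correctly cites the literature just as the paper does.
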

\begin{proof}
	Let us prove~\eqref{teo:nondetermination_dimension4_I}. For $K \in \cK^n$  we introduce the shorthand notation $K_x := K \cap (K + x)$. Let $x \in \real^m$ and $y \in \real^\ell$. We will show $\gphi{K \times H}(x,y) = \gphi{K \times (-H)}(x,y)$. Clearly, $(K \times H)_{(x,y)} = K_x \times H_y$ and thus $g_{K \times H}(x,y) = \phi(K_x \times H_y)$.
	Noticing that $K_x \times H_y$ can be transformed into $K_x \times (-H_y)$ by an isometry, we get $\gphi{K \times H}(x,y) = \phi(K_x \times (-H_y))$. The trivial relation $-H_y = (-H)_y - y$  implies $\gphi{K \times H}(x,y) = \phi( K_x \times (-H)_y - (o,y))$.
	Every translation is obviously an isometry, and so in the above expression the translation vector $-(o,y)$ can be discarded. We arrive at $\gphi{K \times H}(x,y) = \phi ( K_x \times (-H)_y ) = \gphi{K \times (-H)}(x,y)$.

	The proof of~\eqref{teo:nondetermination_dimension4_II} coincides with the corresponding one for the covariogram. 
\end{proof}

When $\phi$ is the width, similar counterexamples can be constructed in every dimension $n\geq3$.

\begin{theorem}\label{thm:cart_prod_for_wcov}
	Let \( H \in \cK^\ell_0 \), \( K \in \cK^m_0 \), $z=(o,z')\in \real^\ell\times\real^m$
	with $z'\in\usphere^{m}$ and let $\phi$ denote the  width in direction $z$.
	\begin{enumerate}[(I)]
	 \item Then \(\gphi{H \times K} \) is completely determined by \( DH \) and \( K \) by means of the following equality, which is valid for every \( (x,y)\in \real^\ell \times \real^m \):
	\[
		\gphi{H \times K}(x,y)= \chf_{DH}(x)\  \wid((K \cap (K+y)),z').
	\]
	\item If $H'\in\cK^\ell_0$ and $DH=DH'$, then \( \gphi{H \times K}=\gphi{H' \times K}\).
	\end{enumerate}
\end{theorem}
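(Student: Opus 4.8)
The plan is to prove both assertions by reducing everything to the slice structure of a Cartesian product. First I would record the basic observation, already used in the proof of Theorem~\ref{teo:nondetermination_dimension4}, that for $(x,y)\in\real^\ell\times\real^m$ one has
\[
	(H\times K)\cap\bigl((H\times K)+(x,y)\bigr)
		=
	\bigl(H\cap(H+x)\bigr)\times\bigl(K\cap(K+y)\bigr).
\]
Write $H_x:=H\cap(H+x)$ and $K_y:=K\cap(K+y)$ for brevity. Then $\gphi{H\times K}(x,y)=\wid\bigl(H_x\times K_y,z\bigr)$, where $z=(o,z')$.

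The key step is a width computation for a Cartesian product against a direction of the special form $(o,z')$. I would show that for any nonempty compact convex sets $A\subset\real^\ell$ and $B\subset\real^m$,
\[
	\wid\bigl(A\times B,(o,z')\bigr)=\wid(B,z').
\]
This is immediate from the definition: for $(a,b)\in A\times B$ we have $\sprod{(o,z')}{(a,b)}=\sprod{z'}{b}$, so $\max_{A\times B}\sprod{(o,z')}{\dotvar}=\max_B\sprod{z'}{\dotvar}$ and likewise for the minimum; subtracting gives the claim. Since $\wid_B(z')$ depends only on $B$ (and on $z'$), the width of the slice $H_x\times K_y$ in direction $z$ equals $\wid(K_y,z')=\wid\bigl(K\cap(K+y),z'\bigr)$, which does not involve $H_x$ at all, except through the question of whether the slice is empty.

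To finish (I), I would separate the cases. If $x\in DH$, then $H_x\neq\emptyset$, and moreover $H_x$ has nonempty interior for $x\in\intr DH$; in any case $H_x$ is a nonempty compact convex set, so by the width formula $\gphi{H\times K}(x,y)=\wid\bigl(K\cap(K+y),z'\bigr)$, independently of $x$. (Here one should note that $\wid$ of the possibly lower-dimensional slice $K_y$ is still well-defined, and that $\wid(\emptyset,z')$ plays no role because $y\in DK$ whenever the product slice is nonempty; when $y\notin DK$ the factor $K_y$ is empty and the product slice is empty, giving value $0$, which matches $\chf_{DH}(x)\cdot\wid(K_y,z')$ under the convention that the right-hand factor is $0$ there — this is exactly the convention already implicit in the definition of $\phi$-covariograms via $\phi(\emptyset)=0$.) If $x\notin DH$, then $H_x=\emptyset$, so the product slice is empty and $\gphi{H\times K}(x,y)=\phi(\emptyset)=0=\chf_{DH}(x)\cdot\wid(K_y,z')$. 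Combining the two cases gives exactly the asserted formula
\[
	\gphi{H\times K}(x,y)=\chf_{DH}(x)\,\wid\bigl(K\cap(K+y),z'\bigr).
\]
Assertion (II) is then an immediate corollary: the right-hand side of this formula depends on $H$ only through the indicator $\chf_{DH}$, hence only through the set $DH$; so $DH=DH'$ forces $\gphi{H\times K}=\gphi{H'\times K}$.

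I do not expect a genuine obstacle here; the statement is essentially a bookkeeping consequence of the product structure of sections and of the fact that width in a direction orthogonal to the first factor ignores that factor. The only point requiring a little care is the handling of degenerate slices and the empty set, i.e. making sure the product convention $\chf_{DH}(x)\cdot\wid(K_y,z')$ agrees with $\phi$ evaluated on a possibly empty or lower-dimensional set; this is routine given $\phi(\emptyset)=0$ and the fact that $\wid(\dotvar,z')$ extends continuously (indeed trivially) to compact convex sets of any dimension.
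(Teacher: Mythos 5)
Your proposal is correct and follows essentially the same route as the paper: factor the section of the product as $(H\cap(H+x))\times(K\cap(K+y))$, observe that the width in direction $(o,z')$ of a nonempty product depends only on the second factor, and split on $x\in DH$ versus $x\notin DH$, with (II) following immediately from the formula. Your extra care with the empty and lower-dimensional slices is a harmless elaboration of the same argument.
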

\begin{proof}
	 We have
	\[
		(H \times K) \cap (H \times K + (x,y)) = (H \cap (H+x)) \times (K \cap (K+y)).
	\]
	Thus, if \( x \not\in DH \), we have \( H \cap (H+x) = \emptyset \) and by this \( \gphi{H \times K}(x,y)=0 \). On the other hand, if \( x \in DH \), we have \( H \cap (H+x) \ne \emptyset \) and by this
	\begin{align*}
		\gphi{H \times K}(x,y)
		& = \wid((H \cap (H+x)) \times (K \cap (K+y)),(o,z'))
		\\ & = \wid((K \cap (K+y)),z').
	\end{align*}
\end{proof}

Theorem~\ref{thm:cart_prod_for_wcov} can be used to prove  Theorem~\ref{counterexamples_width_cov} by choosing $\ell \ge 2$, $H'$ a simplex, $H=(1/2) DH'$, $m=1$ and $K=[-1,1]$. We will give another proof of Theorem~\ref{thm:cart_prod_for_wcov}, which provides counterexamples with a different, much richer,  structure.
Let $z\in \usphere^{n-1}$. A set \( K \in \cK^n \) is called  \emph{$z$-prismatoid} with bases \( F(K,z) \) and \( F(K,-z) \) if \( K= \conv (F(K,z) \cup F(K,-z)) \).

\begin{theorem} Let $z\in \usphere^{n-1}$ and let $\phi$ be the width in direction $z$.
	\label{thm on prismatoids}
	\begin{enumerate}[(I)]
	 \item \label{cla:thm on prismatoids_I}Let \( K \in \cK^n_0 \) be a $z$-prismatoid with bases \( F = F(K,z) \) and $G = F(K,-z)$ and assume $DF= DG$. Then $\gphi{K}$ is
	 determined by $DF$ and $F-G$.
	 \item \label{cla:thm on prismatoids_II} Let $H, H'\subset\{x : \sprod{x}{z}=0\}$ and $L\subset\{x : \sprod{x}{z}=1\}$ be convex compact sets and assume $DH=DH'$. Then $K=\conv((H+L) \cup (H-L))$ and $K'=\conv((H'+L) \cup (H'-L))$ are $z$-prismatoids with the same $\phi$-covariogram.
	\end{enumerate}
\end{theorem}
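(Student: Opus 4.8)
The plan is to compute $\gphi{K}(x)$ directly from the geometry of a $z$-prismatoid. Fix coordinates so that $z=e_n$, and for $t\in[0,1]$ write $K(t)$ for the slice $\{y : \sprod{y}{z}=t\}\cap K$, viewed as a subset of $\real^{n-1}$ via the obvious identification. For a $z$-prismatoid $K=\conv(F\cup G)$ with $F=F(K,z)$ (at height $1$, say) and $G=F(K,-z)$ (at height $0$), convexity gives $K(t)=(1-t)G+tF$. For a translation vector $x=(x',\xi)\in\real^{n-1}\times\real$ with $0\le\xi\le1$ (the case $\xi<0$ being symmetric and $|\xi|>1$ giving an empty intersection), the body $K\cap(K+x)$ is itself contained in the strip $\{t : \xi\le t\le 1\}$, hence has width in direction $z$ equal to $1-\xi$ \emph{provided} it is nonempty and actually meets both bounding hyperplanes $\{t=\xi\}$ and $\{t=1\}$; otherwise its width is strictly smaller. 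First I would record the clean statement: $\gphi{K}(x)=\wid(K\cap(K+x),z)=1-\xi$ iff the top slice $F$ meets $K(1)+x'= \text{(something)}$ and the slice at height $\xi$ of $K+x$, namely $G+x'$, meets $K(\xi)=(1-\xi)G+\xi F$; and more generally $\gphi{K}(x)$ equals the length (in direction $z$) of the set of $t$ for which $K(t)\cap(K(t-\xi)+x')\ne\emptyset$.

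Next I would carry out part (I). The slice condition ``$K(t)\cap(K(t-\xi)+x')\ne\emptyset$'' says $x'\in K(t)-K(t-\xi)$. Using $K(t)=(1-t)G+tF$ and Minkowski-linearity of the difference, $K(t)-K(t-\xi)=(1-t)G+tF-(1-t+\xi)G-(t-\xi)F=\xi(F-G)+(1-t)(G-G)+\ldots$; after collecting terms this becomes $\xi(F-G) + (\text{a multiple of }D G \text{ and }DF)$. Here the hypothesis $DF=DG$ is exactly what is needed to collapse all the $t$-dependent Minkowski-difference terms into a single set of the form $\mu\, DF$ with $\mu=\mu(t,\xi)\ge0$ a simple affine expression; so the membership $x'\in \xi(F-G)+\mu(t,\xi)DF$ depends on $(K,x)$ only through $F-G$ and $DF$. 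Then $\gphi{K}(x)$ is the Lebesgue measure of the set of admissible $t$, which is therefore a function of $x$, $F-G$ and $DF$ alone. This proves (I). I expect the bookkeeping of which multiples of $DF$ and $DG$ appear — and checking the endpoint cases $\xi=0,1$ and $x$ on $\bd DK$ where the slice is lower-dimensional — to be the main fussy point, but it is routine convexity.

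Finally, part (II) is then immediate: with $H,H',L$ as given, $K=\conv((H+L)\cup(H-L))$ is a $z$-prismatoid with bases $F=H+L$ at height $1$ and $G=H-L$ at height $0$ (reading $L$ at height $1$); then $DF=D(H+L)=DH+DL=DH'+DL=D(H'+L)=DF'$ and likewise $DG=DH+DL=DG'$, and $DF=DG$ since $DH+DL$ is symmetric; moreover $F-G=(H+L)-(H-L)=DL$ (independent of $H$), while $DF=DH+DL=DH'+DL=DF'$. So $K$ and $K'$ have the same $F-G$ and the same $DF$, hence by part (I) the same $\phi$-covariogram. The one genuine thing to verify carefully is that $\conv((H+L)\cup(H-L))$ really is a $z$-prismatoid with the claimed bases, i.e.\ that $F(K,z)=H+L$ and $F(K,-z)=H-L$ and nothing in between sticks out — this holds because $H\pm L$ lie in parallel hyperplanes orthogonal to $z$ and the convex hull of two such slices is automatically their $z$-prismatoid. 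I would remark that taking $L$ a segment recovers the Cartesian-product examples of Theorem~\ref{thm:cart_prod_for_wcov}, while general $L$ (and $H$ a non-centrally-symmetric body with a non-congruent $H'$ satisfying $DH=DH'$, which exists in every dimension $\ge 2$) yields the promised richer family, and feeding this into $n=\ell+1\ge3$ proves Theorem~\ref{counterexamples_width_cov}.
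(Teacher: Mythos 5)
Your slicing argument is correct and takes a genuinely different route from the paper. The paper proves part (I) by showing $DK=\conv\bigl((F-G)\cup(G-F)\cup DF\bigr)$ and then that $DK=\core K\cup(-\core K)$, so that Lemma~\ref{lem:width_core} gives $\gphi{K}(x)=\wid(K,z)-|\sprod{z}{x}|$ on $DK$; the hypothesis $DF=DG$ enters through a convex-combination bookkeeping argument showing every $x\in DK$ lies in $\pm\core K$. You instead compute slice by slice: with $K(t)=(1-t)G+tF$ one gets
\[
K(t)-K(t-\xi)=(1-t)DG+(t-\xi)DF+\xi(F-G)=(1-\xi)DF+\xi(F-G)
\]
once $DF=DG$, so the admissibility condition on $t$ is in fact independent of $t$ and $\gphi{K}(x',\xi)=(1-\xi)\,\chf_{(1-\xi)DF+\xi(F-G)}(x')$ for $0\le\xi\le1$ (after normalizing the height, which is itself read off from $F-G$). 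This is essentially the same algebra as the paper's $\lambda_1x_1+\lambda_2x_2+\lambda_3x_3$ manipulation, but organized locally; it buys you an explicit closed formula for $\gphi{K}$ that subsumes the paper's $\wid(K,z)-|\sprod{z}{x}|$ together with the description of the support, and it avoids introducing $\core K$ at all. The reduction of the width of $K\cap(K+x)$ to the measure of the set of admissible heights is legitimate because that set is the projection of the convex set $K\cap(K+x)$ onto the $z$-axis.

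One slip to fix in part (II): $(H+L)-(H-L)$ is \emph{not} $DL$; it equals $DH+2L$, which does depend on $H$ (the paper records exactly this identity). The conclusion survives unchanged, since $DH=DH'$ gives $F-G=DH+2L=DH'+2L=F'-G'$, and likewise $DF=DH+DL=DH'+DL=DF'$ with $DF=DG$ automatic because $D(H+L)=D(H-L)=DH+DL$. Also, your bases sit at heights $1$ and $-1$ (not $0$), a harmless normalization issue. With these corrections the proof is complete.
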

\begin{proof}
	For showing Assertion~\eqref{cla:thm on prismatoids_I} it suffices to verify
	\begin{equation}
			DK = \conv \Bigl( (F - G) \cup (G - F) \cup DF \Bigr) \label{DK:decomp}
	\end{equation}
	and, for $x\in DK$, 
	\begin{equation}
		\gphi{K}(x) = \wid(K,z) - |\sprod{z}{x}|. \label{width:cov:very:simple}
	\end{equation}

	Taking into account \( K = \conv(F \cup G) \) and \( DF=DG \), equality \eqref{DK:decomp} is derived in the following straightforward way:
	\begin{align*}
		DK 
			& = \conv (F \cup G) - \conv(F \cup G) 
			\\ & = \conv\Bigl((F \cup G) - (F \cup G)\Bigl)
			\\ & = \conv\Bigl((F - G) \cup (G - F) \cup DF\Bigl),
	\end{align*}
	Here we used the identity $\conv DA = D\conv A$, which is  valid  for every  $A\subset\real^n$ (see \cite[Theorem 1.1.2]{Schneider-1993}). Let $\core K$ be defined as in the paragraph preceding Lemma~\ref{lem:width_core} and let us prove
	\begin{equation}
			DK  = \core K \cup (-\core K). \label{DK:core:eq}
	\end{equation}
	As soon as \eqref{DK:core:eq} is shown, \eqref{width:cov:very:simple} is a consequence of \eqref{DK:core:eq} and Lemma~\ref{lem:width_core}.
	We have \( \core K \cup (- \core K) \subset DK \) by definition of \( \core K \) and \( DK \). Thus, for concluding the proof it suffices to show \( DK \subset \core K \cup (- \core K )\).

	Let \( x \in DK\). By~\eqref{DK:decomp} and since $F-G$, $G-F$ and $DF$ are convex sets, \( x \) can be represented as a convex combination of three vectors \( x_1 \in F-G \), \( x_2 \in G - F \) and \( x_3 \in DF \), say \( x = \lambda_1 x_1 + \lambda_2 x_2 + \lambda_3 x_3 \) with \( \lambda_i \ge 0 \) for \( i \in \{1,2,3\} \) and \( \lambda_1 + \lambda_2 + \lambda_3 = 1 \). We distinguish between the case \( \lambda_1 \le \lambda_2 \) and the case \( \lambda_1 \ge \lambda_2 \). Consider the case \( \lambda_1 \ge \lambda_2 \). One has
	\begin{align*}
		x & = (\lambda_1-\lambda_2) x_1 + \lambda_2 (x_1+ x_2) + \lambda_3 x_3
			\\ & \in (\lambda_1 - \lambda_2) (F-G) + \lambda_2 (F - G + G - F) + \lambda_3 DF
			\\ & = (\lambda_1 - \lambda_2) (F-G) +  \lambda_2 (DF+DG) + \lambda_3 DF
			\\ & = (\lambda_1 - \lambda_2) (F-G) + 2 \lambda_2 DF + \lambda_3 DF
			\\ & = (\lambda_1 - \lambda_2) (F - G) + (2 \lambda_2 + \lambda_3) DF.
	\end{align*}
	Hence we obtain
	\begin{align*}
		x  \in & \conv ((F - G) \cup DF) 
			\\  = & \conv ((F - G) \cup (F - F))
			\\  = & \conv (F - (G \cup F))
			\\  = & F - \conv (G \cup F)
			\\  = & F - K.
	\end{align*}
	Here we used again \cite[Theorem 1.1.2]{Schneider-1993}. Using \( DF=DG \) in a similar fashion we obtain $x \in K - G$. Above we have shown \( x \in (F - K) \cap (K - G) = \core K \). Analogously, in the case \( \lambda_1 \le \lambda_2 \) it can be shown that \( x \in - \core K \). By this we obtain \eqref{DK:core:eq} and, thus, also \eqref{width:cov:very:simple}.
	
	For showing \eqref{cla:thm on prismatoids_II} we observe that the assumptions of Assertion~\eqref{cla:thm on prismatoids_I} are fulfilled because
	\begin{gather*}
	D(H+L)=D(H-L)=DH+DL, \\
	D(H'+L)=D(H'-L)=DH'+DL.
	\end{gather*}
Thus $\gphi{K}$ is uniquely determined by $D(H+L) = DH + DL$ and $(H+L) - (H-L) = DH + 2 L$. Consequently, $\gphi{K}$ is determined by $DH$ and $L$, that is, if we replace $H$ by  $H'$  the width-covariogram remains unchanged.
\end{proof}

\begin{proof}[Proof of Theorem~\ref{counterexamples_width_cov}]
It suffices to define $K$ and $K'$ following the construction described in Theorem~\ref{thm on prismatoids}~\eqref{cla:thm on prismatoids_II}. For instance, let $H'$ be an $(n-1)$-dimensional simplex in $\{x : \sprod{x}{z}=0\}$ and let $H=(1/2) DH'$. The set $H$ is $o$-symmetric and $DH=DH'$. Let $L$ be a noncentrally symmetric convex polytope in $\{x : \sprod{x}{z}=1\}$. We have $H+L\subset \{x : \sprod{x}{z}=1\}$ and  $H-L\subset \{x : \sprod{x}{z}=-1\}$. Moreover $H-L=-(H+L)$, and this implies that $K$  is $o$-symmetric.

The set $K$ is not a translation of $K'$  because $F(K,z)=H+L$ is not a translation of $F(K',z)=H'+L$. Indeed, if $H+L=H'+L+\tau$, for some $\tau\in\real^n$, then $H=H'+\tau$, by the cancellation law for Minkowski addition \cite[p. 126]{Schneider-1993}, and this identity is false. 
\end{proof}

\section{Random variables associated to \( \phi \)-covariograms} 
\label{sect rand var}

The measurements of random chords of a given set are discussed in    Ehlers and Enns \cite{MR0471018}, \cite{EnnsEhlers-1981}, \cite{MR1242019}, Santal\'o \cite[Chapter~4]{MR2162874} and Schneider and Weil \cite[Section~8.6]{MR2455326}.

We begin this section by presenting three random variables which provide the same information about $K$ as \( g_K \).

The first one has been considered by Matheron \cite{MR0385969} and  Nagel \cite{MR1232748}.
Let  \( K\in\cK^n \), \( u\in \usphere^{n-1} \), and let \( l \) be a random line parallel to \( u \)  distributed uniformly among all lines parallel to \( u \) that intersect \( K \).
 This random variable is defined  by
\[
 L_{\mu,u}=\length (l\cap K).
\]
If we change the definition of \( L_{\mu,u} \) by letting also \( u \) to be chosen at random on \( \usphere^{n-1} \), then we get \( L_\mu \), that is the length of a chord chosen under \emph{\( \mu \)-randomness} \cite{MR0471018}.  

The second random variable has been considered by Adler and Pyke \cite{AP91} and is defined as  \( X_1-X_2 \), where \( X_1 \) and \( X_2 \) are  independent random variables uniformly distributed in \( K \).

The third random variable is defined by
	\[
 		L_{\nu,u}
			=
		\length\left((X+l_u)\cap K\right),
	\]
	where $X$ is a random variable uniformly distributed in $K$. It corresponds to choosing the chord of \( K \) under \emph{\( \nu \)-randomness} \cite{MR0471018}. 
	
Knowing the distribution of \( L_{\mu,u} \) for each \( u \) or knowing the distribution of $X_1-X_2$ is equivalent to knowing \( g_K \) (see, for instance, \cite{averkov-bianchi-2009}).  The same holds true for \( L_{\nu,u} \) too: the knowledge of the distribution of $L_{\nu,u}$ for each \( u \) is equivalent to the knowledge of \( g_K \).  Since we have not found this mentioned in the literature, we prove it.  For each \( r\geq0 \)  the event \( \{L_{\nu,u}\geq r\} \) coincides with the event \( \{X\in A\} \), where \( A \) is the  union of all chords of \( K \) parallel to \( u \) and of length at least \( r \). Let $A_u$ be the orthogonal projection of $A$ onto the orthogonal complement of $u$. It is known that \( -\pderiv{r}g_K(ru) \) depends continuously on $r$ for $0< r < \rho(DK,u)$ and coincides with the \( (n-1) \)-volume of \( A_u \); see \cite[Proposition~4.3.1]{MR0385969}.
	Consequently, \( \vol(A) = g_K(ru)-r\pderiv{r}g_K(ru) \). Thus we have
	\begin{equation}\label{diff_equation_for_cov}
 		\Prob(L_{\nu,u}\geq r)
			=
		\frac{g_K(ru)}{\vol(K)}-r\pderiv{r}\left(\frac{g_K(ru)}{\vol(K)}\right),
	\end{equation}
where the notation \( \Prob \) stands for the probability of a random event. 
This formula shows that the knowledge of \( g_K \) gives the distribution of \( L_{\nu,u} \) for each \( u \) (recall that \( g_K(o)=\vol(K) \)). 
On the other hand, formula~\eqref{diff_equation_for_cov} is a differential equation for \( g_K(ru)/\vol(K) \). 
The distribution  of \( L_{\nu,u} \), for a given \( u \), determines $\rho(DK,u)$, because the support of this distribution is $[0,\rho(DK,u)]$. The right hand side of \eqref{diff_equation_for_cov} can be rewritten as $-r^2 \pderiv{r} \left( \frac{g_K(ru)}{r \vol(K)}  \right)$  for $0<r<\rho(DK,u)$. Hence $g_K(ru)/\vol(K)$ for $r \in [0,\rho(DK,u)]$ can be determined by the knowledge of $\Prob(L_{\nu,u} \ge r)$ for $r \in [0,\rho(DK,u)]$ by means of integration, by taking into account that $g_K(ru)$ vanishes for $r=\rho(DK,u)$. This determines  \( g_K(x)/\vol(K) \)  for each \( x\in\real^n \).  
On the other hand, the integral of $g_K / \vol(K)$ on \( \real^n \) equals \( \vol(K) \); see  Theorem~\ref{g phi thm} (\ref{integral of g phi}). 
We can thus determine \( g_K \).

Let us now pass to  random variables related to \( \phi \)-covariograms for $\phi$ more general than the volume. Let us start by proving Theorem~\ref{thm random lambda sect}.  Ehlers and Enns \cite{EnnsEhlers-1981} study \( L_{\gamma,u} \) in the case of $\len_B$ being the Euclidean length.
These authors denote the  way of choosing a random chord of \( K \) which corresponds to \( L_{\gamma,u} \) as  \emph{\( \gamma \)-randomness}.

\begin{proof}[Proof of Theorem~\ref{thm random lambda sect}] 
	We prove that for $r \ge 0$ we have 
		\begin{equation}\label{dist_for_l3}
			\Prob( L_{\gamma,u}\ge r ) = 
			\begin{cases}
				1 & \text{if} \ 0 \le r \le r_1,
				\\ \big(\gperim{K}(r u) + r \|u\|_B\big)/\perim_B(K) &
				\text{if} \ r_1 < r \le r_2,
				\\ \gperim{K}(r u)/\perim_B(K) & \text{if}  \ r_2 < r,
			\end{cases}
		\end{equation}
	where 
	\begin{align*}
		r_1 & := \min \{ \len(F(K,\cR u)), \len(F(K,-\cR u)) \},
		\\ r_2 & := \max \{ \len(F(K,\cR u)), \len(F(K,-\cR u)) \}.
	\end{align*}
	
	The case $0 \le r \le r_1$ of \eqref{dist_for_l3} is trivial since every chord of $K$ parallel to $u$ has length at least $r_1$. In the case  $r_2 < r$ the formula holds because in this case the event \( \{L_{\gamma,u}\geq r\} \) coincides with the event \( \{Y \not\in \relint \arc(r u) \cup \relint  \arc(-ru)\} \) (we use the notations introduced at the beginning of Section~\ref{sect deriv}), which has probability $\gperim{K}(r u) / \perim_B(K)$. Consider the case $r_1 < r \le r_2$. In this case the parallelogram $\inspar(r u)$ has exactly one edge parallel to $u$ and lying in the boundary of $K$. Without loss of generality, assume  $[p_3(ru),p_4(ru)] \subset \bd K$, that is, $[p_3(ru),p_4(ru)]=\arc(-ru)$. In this case $\{ L_{\gamma,u} \ge r \} = \{ Y \not\in \relint \arc(r u)\}$. The event $\{Y \not\in \relint \arc(ru)\}$ is the disjoint union of the events $\{ Y \not\in \relint(\arc(ru)) \cup \relint(\arc(-ru)) \}$ and $\{Y \in [p_3(ru),p_4(ru)] \}$, which have probabilities $\gperim{K}(r u) / \perim_B(K)$ and $r \| u\|_B / \perim_B(K)$, respectively. This yields \eqref{dist_for_l3} in the case $r_1 < r \le r_2$. 
		
	The knowledge of $B$ and \( \gperim{K} \) determines \( \perim_B(K) = \gperim{K}(o) \) and the values $r_1$ and $r_2$ (by Claim~\ref{opposite lengths claim} for the direction $\cR u$).  Thus \eqref{dist_for_l3} shows that the knowledge of $B$ and $\gperim{K}$ determines the distribution of \( L_{\gamma,u} \).  
	
	For the converse implication, we assume that $B$ and the distribution of $L_{\gamma,u}$ is known for every $u \in \ucircle$. This yields $\rho(DK,u)$ for every $u \in \ucircle$ and determines $DK$. Using the knowledge of $B$ we also determine $\perim_B(K) = \frac{1}{2} \perim_B(DK)$. Having $\perim_B(K)$, the $\perim_B$-covariogram is determined from \eqref{dist_for_l3} at every vector $r u$    with $r>0$ and $u \in \ucircle$ whenever $r_1=r_2=0$. Note that $r_1=r_2=0$ if and only if $DK$ has no boundary segment parallel to $u$. Thus, $\gperim{K}$ is determined on a dense subset of $\real^2$ and, in view of the continuity of $\gperim{K}$ on $DK$ (which follows from Theorem~\ref{g phi thm} \eqref{g phi thm_a}), the covariogram of $\gperim{K}$ is determined on the whole $\real^2$. 

	The second assertion is an immediate consequence of  the first one and of the determination results provided by Theorems~\ref{retrieval centr sym thm}, \ref{retrieval of polygons thm} and \ref{retrieval for width-covar thm}.
\end{proof} 

In order to proceed we need the following lemma. Assume that one does not have access to the \( \phi \)-covariogram directly but only to the \( \phi \)-covariogram scaled by an unknown constant factor. We prove that when \( \phi \in \Phi^2 \setminus \{0\} \) this additional ambiguity is not an obstacle, that is, one can determine the unknown constant factor and by this also the nonscaled \( \phi \)-covariogram.

\begin{lemma} \thmheader{Determination of the multiplicative constant}
	\label{lem mult const}
	Let \( K\in\cK^2_0 \), \( \phi \in \Phi^2 \setminus \{0\} \) and \( \beta > 0 \). Then the knowledge of \( \phi \) and \( \beta \gphi{K} \) determines \( \beta \) and \( \gphi{K} \).
\end{lemma}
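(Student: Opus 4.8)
The plan is to reconstruct the two homogeneity components of $\beta\gphi{K}$ from the values of the scaled covariogram along rays. Recall that by Theorem~\ref{g phi thm}, $\gphi{K}=\gperim{K}+\alpha g_K$, where $\gperim{K}$ is the $\perim_B$-covariogram. The point is that $\gperim{K}$ and $g_K$ scale \emph{differently} under dilations of $K$: $\gperim{}$ is built from a degree-one valuation while $g$ is built from a degree-two one. But here $K$ is fixed, so instead I would exploit the asymptotics near $\bd DK$. Fix a direction $u\in\ucircle$ for which $F(DK,u)$ is a single point, i.e.\ $DK$ has no boundary segment orthogonal to $u$; there are only countably many bad directions, so such $u$ exist. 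For such $u$, as $t\uparrow 1$ one has, along the segment $tx_u$ with $x_u\in\bd DK$ the boundary point in direction $u$, that $g_K(tx_u)=O((1-t)^2)$ (indeed $\sqrt{g_K}$ is concave and vanishes at $t=1$, and the parallelogram $\inspar$ has area tending to $0$), while $\gperim{K}(tx_u)$ tends to $0$ \emph{linearly} with a positive slope given by Theorem~\ref{radial deriv of g perim}, namely $-\partial^-_t\gperim{K}(tx)|_{t=1}=\len_B(\thecap(x))+\len_B(\thecap(-x))>0$ (positivity because, for $x\in\bd DK$ with $F(DK,u)$ a point, $\thecap$ is a nondegenerate polygonal line). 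Hence from the one-sided derivative of $\beta\gphi{K}$ at such a boundary point we read off $\beta\bigl(\len_B(\thecap(x))+\len_B(\thecap(-x))\bigr)$, and since $B$ (hence $\len_B$) is known from $\phi$ and $\thecap$ is a geometric object determined by $DK=\supp(\beta\gphi{K})$, we recover $\beta$.

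More carefully, the steps I would carry out are: \textbf{(1)} From $\beta\gphi{K}$ recover its support, which by Theorem~\ref{g phi thm}\eqref{g phi thm_a} equals $DK$; hence $DK$ is known. \textbf{(2)} Extract the decomposition datum: $\phi$ determines $\alpha\ge 0$ and $B\in\cS^2$ via Theorem~\ref{repr phi thm}. \textbf{(3)} Split into the cases $\perim_B\not\equiv 0$ and $\perim_B\equiv 0$. In the first case, pick a direction $u$ with $F(DK,u)$ a point; then $\thecap(K,x)$ and $\thecap(K,-x)$ are determined by $DK$ (their vertices are the relevant boundary points of $K$, but only differences matter, and $\inspar$, its edges, and the tangent lines $l_i$ are all read off from $DK$ since $K$ and $-K$ play symmetric roles — more precisely the quantity $\len_B(\thecap(x))+\len_B(\thecap(-x))$ is a translation/reflection invariant of the pair $(K,-K)$ and is determined by $DK$ together with the curvature-type data, which in turn is determined by $DK$ when $F(DK,u)$ is a point since then $F(K,u),F(-K,u)$ are points too). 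Then
\[
	-\left.\pderivleft{t}\bigl(\beta\gphi{K}(tx)\bigr)\right|_{t=1}
		=\beta\Bigl(\len_B(\thecap(K,x))+\len_B(\thecap(K,-x))\Bigr),
\]
using that the $\alpha g_K$ term contributes $0$ to this left derivative (degree-two vanishing), and the right-hand factor is a known positive number, so $\beta$ is recovered, and then $\gphi{K}=\beta^{-1}(\beta\gphi{K})$. \textbf{(4)} In the degenerate case $\perim_B\equiv 0$, by Proposition~\ref{prop on B and per B}\thinspace(I) this means $B=\real^2$, so $\gphi{K}=\alpha g_K$ with $\alpha>0$ (as $\phi\neq 0$); then $\int_{\real^2}\beta\gphi{K}=\beta\alpha\vol(K)^2$ by Theorem~\ref{g phi thm}\eqref{integral of g phi}, and also $\beta\gphi{K}(o)=\beta\alpha\vol(K)$, so dividing gives $\vol(K)$, hence $\beta\alpha=\beta\gphi{K}(o)/\vol(K)$, hence $\beta$ (since $\alpha$ is known).

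The main obstacle I anticipate is the bookkeeping in step (3): justifying rigorously that the left radial derivative of $\beta\gphi{K}$ at a boundary point $x\in\bd DK$ exists, is finite and nonzero, and that its value depends only on $B$ and on $DK$ (not on the unknown $K$ itself). Existence and finiteness follow from concavity of $\gperim{K}$ and $\sqrt{g_K}$ on $DK$ (Theorem~\ref{g phi thm}\eqref{g phi thm_a}, last item) together with Theorem~\ref{radial deriv of g perim}; the $g_K$ part contributes a finite (in fact zero, by the same concavity-and-vanishing argument used in the proof of Theorem~\ref{thm random lambda sect}) left derivative at $t=1$ for directions where $\inspar$ degenerates. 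Nonvanishing is the crux: one must rule out $\len_B(\thecap(x))+\len_B(\thecap(-x))=0$, which for $B$ bounded (a genuine norm) forces $\thecap(x)=\thecap(-x)=\{x\}$ resp.\ a point, i.e.\ $x\in\bd DK$ with parallel supporting lines, impossible for $x$ in a direction where $F(DK,u)$ is a point unless $DK$ is a segment — but $K\in\cK^2_0$ excludes that; for $B$ a strip, one argues analogously using that $w(K,z)$-type contributions along suitable $u$ are positive. I would handle the strip sub-case by instead choosing $u$ not orthogonal to the strip direction so that the width contribution is strictly positive, mirroring the case analysis in Claim~\ref{opposite lengths claim}. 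Once nonvanishing is secured, the identification of $\beta$ is immediate and the lemma follows.
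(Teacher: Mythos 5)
Your step (4) and the preliminary steps (recovering $DK=\supp(\beta\gphi{K})$ and reading off $\alpha$, $B$ from $\phi$) are fine, but step (3) — the heart of the argument — has a genuine gap. Theorem~\ref{radial deriv of g perim} is stated and proved only for $x\in\intr DK$; at a boundary point $x_u\in\bd DK$ the construction of $\thecap$ breaks down, because the right tangent at $p_1(x_u)$ and the left tangent at $p_2(x_u)$ are parallel supporting lines of $K$ on opposite sides whenever $K$ is smooth there, so $p_{1,2}$ does not exist. Correspondingly, the left radial derivative of $\gperim{K}$ at $\bd DK$ is in general $-\infty$, not a finite positive slope: for $K$ the Euclidean unit disk and $B$ the Euclidean ball one computes $\gperim{K}(tx_u)=4\arccos t\sim 4\sqrt{2}\,\sqrt{1-t}$ as $t\to 1^-$, so the difference quotient tends to $-\infty$ and there is nothing finite from which to read off $\beta$. (Your asymptotic $g_K(tx_u)=O\bigl((1-t)^2\bigr)$ is also backwards: concavity of $\sqrt{g_K}$ together with $g_K(x_u)=0$ gives the \emph{lower} bound $g_K(tx_u)\ge (1-t)^2 g_K(o)$, and for the disk $g_K(tx_u)$ is of order $(1-t)^{3/2}$; the conclusion that the volume term has vanishing one-sided derivative at $\bd DK$ happens to be correct, since $-\partial_t g_K(tx)=\vol(\inspar(tx))\to 0$.) A further unproved claim is that $\len_B(\thecap(x))+\len_B(\thecap(-x))$ is determined by $DK$ alone; the tangent data of $K$ at antipodal boundary points is precisely what $DK$ does \emph{not} separate (compare Lemma~\ref{curv info lem}, which needs the covariogram, not merely its support), and $DK$ is far from determining $K$ — a Reuleaux triangle and a disk of the same width have the same difference body.

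The paper's proof avoids derivatives entirely: it forms the $\beta$-free ratio $c=\int_{\real^2}\beta\gphi{K}(x)\,\dd x\,/\,\beta\gphi{K}(o)$, notes that $p=\perim_B(K)=\tfrac12\perim_B(DK)$ is known from the support, uses Theorem~\ref{g phi thm}~(II) to write $c=(2pv+\alpha v^2)/(p+\alpha v)$ with $v=\vol(K)$, and determines $v$ as the unique positive root of the resulting quadratic; then $\beta=\beta\gphi{K}(o)/(p+\alpha v)$. Your step (4) is exactly this argument in the special case $p=0$; running the same ratio computation in the general case is what is needed to repair the proof.
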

\begin{proof}It clearly suffices to determine $\beta$. Let $\phi$ be as in~\eqref{representation_valuation_perim}. Since \( \phi \) is not identically equal to zero, \( \perim_B \) is not identically equal to zero or \( \alpha > 0 \) or both. We introduce parameters \( p,v,c \) as follows:
\begin{equation*}
	p := \perim_B(K),\quad
	v := \vol(K), \quad
	c := \frac{\int_{\real^2} \beta \gphi{K}(x) \dd x}{\beta \gphi{K}(o)}.
\end{equation*}
The parameter \( p \) is determined by the knowledge of \( \beta \gphi{K} \), since Theorem~\ref{g phi thm} (III) yields \( p= \frac{1}{2} \perim_B (\supp (\beta \gphi{K})) \). Furthermore, the parameter \( c \) is determined by \( \beta \gphi{K} \), by construction.

We claim that \( v \) is determined by the knowledge of \( \phi \) and \( c \). By Theorem~\ref{g phi thm} (II) one has
\begin{equation*}
	c = \frac{2 p v + \alpha v^2}{p + \alpha v},
\end{equation*}
which yields
\begin{equation} \label{quad:eq:for:v}
	\alpha v^2 + (2p  - c\alpha ) v - cp  = 0
\end{equation}
In the degenerate case \( \alpha=0 \), we have  \( v = {c}/{2} \) and the claim is proved.
Consider the case \( \alpha>0 \). For a moment, let us view \eqref{quad:eq:for:v} as a quadratic equation in the variable \( v \). Let \( v_1, v_2 \) be the two roots of this equation, counting multiplicities. Note that both roots are real because \( \vol(K) \) is a real root of \eqref{quad:eq:for:v} and thus, the other root is also real. Moreover, by Vieta's formulas \( v_1 v_2 = -{cp}/{\alpha} < 0 \), which shows that one root of \eqref{quad:eq:for:v} is positive and the other one is negative. It follows that \( \vol(K) \) can be determined as the unique positive root of \eqref{quad:eq:for:v}. This concludes the proof of the claim.

Having determined \( p \) and \( v \) we can determine \( \beta \) by the formula
\[
	\beta
		= \frac{\beta \gphi{K}(o)}{\gphi{K}(o)}
		= \frac{\beta \gphi{K}(o)}{p+ \alpha v}.
\]
\end{proof}

In the next theorem we consider a random variable somehow similar to the one studied by Adler and Pyke mentioned above.
Probably the most illustrative case of this random variable is the  one corresponding to \( \beta_1=1 \) and \( \beta_2=0 \), in which case the random variable is associated to the  perimeter-covariogram. 

\begin{theorem} \label{from:rand:var:to:cov} 
	Let \( B\in\cS^2, B \ne \real^2\) and let \( K \in \cK^2_0 \). Let \( X, Z \) and  \( \Sigma \) be mutually independent random variables such that \( \Sigma \) is uniformly distributed in \( \{-1,1\} \) and the densities of \( X \) and \( Z \) coincide, respectively and up to constant multiples, with \( \chf_K \) and \(  \beta_1 \Bdelta_{\bd K}+\beta_2 \chf_K \), where $\beta_1>0$ and $\beta_2 \ge 0$.  Let \( \phi\in\Phi^2 \) be defined by \( \phi= \beta_1 \perim_B+ 2 \beta_2 \vol \). Then the following holds:
	\begin{enumerate}[(I)]
		\item
			The knowledge of $\beta_1,\beta_2, B$ and of the distribution of \( \Sigma(X-Z) \) is equivalent to the knowledge of $\phi$ and the \( \phi \)-covariogram of \( K \).
		\item
			If
			\begin{enumerate}[(a)]
				\item $K$ is centrally symmetric or
				\item $K$ is a polygon and $\beta_2>0$ or 
				\item $K$ is a polygon, $\beta_2=0$ and $B$ is either strictly convex or a strip,
			\end{enumerate}
			then the knowledge of $\beta_1, \beta_2, B$
			and the distribution of \( \Sigma(X-Z) \) determines \( K \), up to translation and reflection, in the class of all planar convex bodies.
	\end{enumerate}
\end{theorem}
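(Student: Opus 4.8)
The plan is to identify the law of $\Sigma(X-Z)$ as a positive scalar multiple of $\gphi{K}$, to remove that scalar by means of Lemma~\ref{lem mult const}, and then to invoke the retrieval theorems for part~(II). First I would compute the density of $\Sigma(X-Z)$. Write $v:=\vol(K)$ and $c:=\beta_1\perim_B(K)+\beta_2\vol(K)$; since $B\neq\real^2$, Proposition~\ref{prop on B and per B}(I)--(III) gives $\perim_B(K)>0$, and $\beta_1>0$, so $c>0$ and the densities of $X$ and $Z$ are $v^{-1}\chf_K$ and $c^{-1}\bigl(\beta_1\Bdelta_{\bd K}+\beta_2\chf_K\bigr)$, respectively (the normalising constant of $Z$ is $c$ because $\int_{\real^2}\Bdelta_{\bd K}=\len_B(\bd K)=\perim_B(K)$). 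By independence, $X-Z$ has density $(vc)^{-1}\,\chf_K\ast\bigl(\beta_1\Bdelta_{-\bd K}+\beta_2\chf_{-K}\bigr)$, where $B=-B$ is used to see that the reflection of $\Bdelta_{\bd K}$ in the origin is $\Bdelta_{-\bd K}$. Multiplying by the independent sign $\Sigma$ symmetrises the law, so $\Sigma(X-Z)$ has density
\begin{equation*}
	\frac{1}{2vc}\left[\beta_1\bigl(\chf_K\ast\Bdelta_{-\bd K}+\Bdelta_{\bd K}\ast\chf_{-K}\bigr)+2\beta_2\,\chf_K\ast\chf_{-K}\right].
\end{equation*}
By Theorem~\ref{g phi thm}(I) applied with $\perim_B$ in place of $\phi$ (legitimate, since $B\neq\real^2$ gives $\perim_B\in\Phi^2\setminus\{0\}$) together with the well-known identity $g_K=\chf_K\ast\chf_{-K}$, the bracketed expression equals $\beta_1\gperim{K}+2\beta_2 g_K$, and this equals $\gphi{K}$ by the linearity of $\psi\mapsto g_{K,\psi}$ (clear from the definition $g_{K,\psi}(x)=\psi(K\cap(K+x))$). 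Hence the density of $\Sigma(X-Z)$ is $(2vc)^{-1}\gphi{K}$. The only technical point is that the convolution of the two densities is a genuine $L^1$ function agreeing almost everywhere with the displayed expression; this is handled exactly as in the Fubini argument of the proof of Theorem~\ref{g phi thm}.

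For part~(I): knowing $\beta_1,\beta_2,B$ gives $\phi=\beta_1\perim_B+2\beta_2\vol$ outright, and $\phi\in\Phi^2\setminus\{0\}$ because $\perim_B\not\equiv0$. Knowing the law of $\Sigma(X-Z)$ amounts to knowing $\beta\gphi{K}$ for the a priori unknown positive constant $\beta:=(2vc)^{-1}$, so Lemma~\ref{lem mult const} recovers $\beta$ and $\gphi{K}$. Conversely, $\phi$ together with $\gphi{K}$ yields the law of $\Sigma(X-Z)$ upon normalising $\gphi{K}$ to total mass one, its integral $\int_{\real^2}\gphi{K}$ being finite and positive by Theorem~\ref{g phi thm}(II). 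This is the asserted equivalence.

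For part~(II), with $\phi$ and $\gphi{K}$ in hand the statement reduces to the retrieval theorems, and I would only check that $\phi$ falls within the scope of the relevant one. In case~(a), $\phi\in\Phi^2\setminus\{0\}$ suffices, by Theorem~\ref{retrieval centr sym thm}. In case~(b), $\beta_2>0$ makes the volume term of $\phi$ strictly positive, so $\phi$ is strictly monotone (see the discussion following Theorem~\ref{retrieval of polygons thm}) and Theorem~\ref{retrieval of polygons thm} applies. In case~(c) with $B$ strictly convex, $\perim_B$ — hence $\phi=\beta_1\perim_B$ — is strictly monotone by Proposition~\ref{prop on B and per B}(IV), and again Theorem~\ref{retrieval of polygons thm} applies; in case~(c) with $B$ a strip, which must be a proper strip since $B\neq\real^2$, Proposition~\ref{prop on B and per B}(II) gives $\perim_B=\beta\,\wid(\cdot,z)$ with $\beta>0$ and some $z\in\ucircle$, so $\phi$ is a positive multiple of the width in direction $z$; since rescaling $\phi$ by a positive constant rescales $\gphi{K}$ by the same factor and hence does not affect determination, Theorem~\ref{retrieval for width-covar thm} gives the conclusion.

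I do not expect a serious obstacle. The substance lies entirely in the convolution bookkeeping of the first step — fixing the normalising constants, the reflection $\Bdelta_{\bd K}\mapsto\Bdelta_{-\bd K}$, and the matching with the almost-everywhere identity of Theorem~\ref{g phi thm}(I) — together with the realisation that $\Sigma(X-Z)$ delivers $\gphi{K}$ only up to an unknown positive multiple, which is exactly the situation Lemma~\ref{lem mult const} was designed to handle.
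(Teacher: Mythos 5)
Your proposal is correct and follows essentially the same route as the paper: compute the density of $\Sigma(X-Z)$ by symmetrising the convolution of the two densities, identify it with $\gphi{K}/(2c\vol(K))$ via Theorem~\ref{g phi thm}(I), remove the unknown scalar with Lemma~\ref{lem mult const}, and then invoke Theorems~\ref{retrieval centr sym thm}, \ref{retrieval of polygons thm} and~\ref{retrieval for width-covar thm} for part~(II). Your case-by-case check that $\phi$ falls under the right retrieval theorem (including the harmless positive rescaling in the strip case) is slightly more explicit than the paper's one-line reference, but the substance is identical.
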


\begin{proof}
	Let us prove Assertion (I). The density function of $X$ is $\chf_K/\vol(K)$, while the density of \( Z \) is \( \left(\beta_1 \Bdelta_{\bd K} + \beta_2 \chf_K\right)/c\), where
	\(
		c = \beta_1 \perim_B(K) + \beta_2 \vol(K).
	\)
	Consider a Borel subset \( \Omega \) of \( \real^2. \) Since $\Sigma$ and $X-Z$ are independent and since $\Prob(\Sigma=-1) = \Prob(\Sigma=1)=1/2$, we get
	\begin{align*}
		\Prob(\Sigma(X-Z) \in \Omega )
			& = \frac{1}{2} \big(\Prob(X-Z \in \Omega) + \Prob(Z - X \in \Omega)\big)
			\\ & = \frac{1}{2} \big(\Prob(Z-X \in -\Omega) + \Prob(Z - X \in \Omega)\big).
	\end{align*}
	Thus, the distribution of $\Sigma(X-Z)$ is, up to a multiple, the `even part' of the distribution of $Z-X$. By standard facts in probability, the distribution of $Z-X$ is equal to $\big((\beta_1 \Bdelta_{\bd K} + \beta_2 \chf_K) \ast \chf_{-K} \big)/(c\vol(K))\), i.e. to \( \big(\beta_1\Bdelta_{\bd K} \ast \chf_{-K} + \beta_2\chf_K \ast \chf_{-K}\big)/(c\vol(K))$. By taking the even part of the latter distribution we see that the distribution of $\Sigma(X-Z)$ coincides with
	\[
		\frac{1}{2 c \vol(K)} \bigl( \beta_1 \Bdelta_{\bd K} \ast \chf_{-K} + \beta_1 \Bdelta_{-\bd K} \ast \chf_K + 2 \beta_2 \chf_K \ast \chf_{-K} \bigr ).
	\]
	By Theorem~\ref{g phi thm}~(I), the latter is equal to $\gphi{K}/(2c\vol(K))$.

	Assertion~(I) follows by this and Lemma~\ref{lem mult const}.
	Assertion~(II) is an immediate consequence of Assertion~(I) and of Theorems~\ref{retrieval centr sym thm}, \ref{retrieval of polygons thm} and \ref{retrieval for width-covar thm}.
\end{proof}

\section{Open questions}
\label{sect:open_problems}

\begin{enumerate}[(1)]
\item   Assume that $K$ is a convex polygon. Under which assumptions on the valuation $\phi\in\Phi^2$  does the $\phi$-covariogram problem have a positive answer? And what about the same problem in the case $\phi\notin\Phi^2$, say, if $\phi$ is a continuous translation invariant valuation? See also \cite{MR1837364} for a description of continuous translation invariant valuations in terms of mixed volumes.
 
\item Assume $\phi \in \Phi^2 \setminus \{0\}$ strictly monotone or assume $\phi$ equal to the width in some direction.  Does the $\phi$-covariogram problem has a positive answer for every $K\in\cK^2_0$? 
In the case $\phi=\vol$ the following intermediate question has played an important role in proving a positive answer to this problem.
Assume  $K, H\in\cK^2_0$, $\intr K\cap\intr H\neq\emptyset$ and $\gphi{K}=\gphi{H}$. If $\bd K\cap\bd H$ contains an open arc, is $H=K$? A crucial ingredient in proving a positive answer to this question when $\phi=\vol$ has been a clear geometric interpretation of $\nabla g_K$. 
The  gradient $\nabla g_K(x)$ can be interpreted in terms of the parallelogram inscribed in $K$ and with an edge translate of $x$, and $\nabla g_K=\nabla g_H$   implies that every parallelogram inscribed in $K$ has a translate which is inscribed in $H$.
Thus, it seems interesting to obtain a good understanding of  the information provided by $\nabla \gphi{K}$. 

\item A strengthening of the previous questions is whether the knowledge of $\phi$ is necessary for determination of $K$ from $\gphi{K}$. Formally, this is the question of whether the equality $g_{K,\phi} = g_{H,\psi}$ for $K, H \in \cK_0^2$ and $\phi,\psi \in \Phi^2 \setminus \{0\}$ implies the coincidence of $K$ and $H$, up to translations and reflections. 

\item\label{problem_central_symmetry} Study the $\phi$-covariogram problem when $K$ is a centrally symmetric convex body in $\real^n$, with $n\geq 3$. This problem has certainly a positive answer, for every $n$, when $\phi(K)$ is the surface area of $K$. This generalization can be easily proved following the same lines of the proof of Theorem~\ref{retrieval centr sym thm}. It suffices to extend the representation of the perimeter-covariogram as a convolution to the surface area-covariogram, and to substitute the equality \eqref{censymm3} with the inequality coming from the Brunn-Minkowski inequality for surface area. 
For which quermassintegrals can the problem be treated in the same way?

\item Discussing random variables we noted that $g_K$ is a multiple of the distribution of $X_1-X_2$ for two independent random variables $X_1, X_2$ uniformly distributed in $K$, and so retrieval from $g_K$ can be viewed as the retrieval from the distribution of $X_1-X_2$. In the same vein, for each $K \in \cK_0^n$ one can analyze the information provided by $Y_1-Y_2,$ where $Y_1$ and $Y_2$ are independent random variables uniformly distributed in $\bd K$. Is this information sufficient for determining $K$, up to translations and reflections, when $n=2$? 
This question can be naturally carried over to a more general setting involving arbitrary seminorms (that is, more generally, we can assume that the distributions of $Y_1,Y_2$ coincide with $\Bdelta_{\bd K}/\perim_B$, where $B \in \cS^2$, $B \ne \real^2$).
\end{enumerate}


\begin{thebibliography}{MSW01}

\bibitem[AB09]{averkov-bianchi-2009}
G.~Averkov and G.~Bianchi, \emph{{Confirmation of {M}atheron's conjecture on
  the covariogram of a planar convex body}}, J. Eur. Math. Soc. (JEMS)
  \textbf{11} (2009), 1187--1202.

\bibitem[Ale01]{MR1837364}
S.~Alesker, \emph{{Description of translation invariant valuations on convex
  sets with solution of {P}. {M}c{M}ullen's conjecture}}, Geom. Funct. Anal.
  \textbf{11} (2001), no.~2, 244--272. 

\bibitem[AP91]{AP91}
R.~J. Adler and R.~Pyke, \emph{{Problem~91--3}}, Inst. Math. Statist. Bull.
  \textbf{20} (1991), 409.

\bibitem[BG07]{BaakeGrimm}
M.~Baake and U.~Grimm, \emph{{Homometric model sets and window covariograms}},
  Zeitschrift f{\"u}r Kristallographie \textbf{222} (2007), 54--58.

\bibitem[BGK11]{BiGaKi11}
G.~Bianchi, R.~J. Gardner, and M.~Kiderlen, \emph{{Phase retrieval for
  characteristic functions of convex bodies and reconstruction from
  covariograms}}, J. Amer. Math. Soc. \textbf{24} (2011), 293--343.

\bibitem[Bia02]{MR1909913}
G.~Bianchi, \emph{{Determining convex polygons from their covariograms}}, Adv. in
  Appl. Probab. \textbf{34} (2002), 261--266.

\bibitem[Bia05]{MR2108257}
\bysame, \emph{{Matheron's conjecture for the covariogram problem}}, J. London
  Math. Soc. (2) \textbf{71} (2005), 203--220.

\bibitem[Bia09a]{Bianchi-2009-polytopes}
\bysame, \emph{{The covariogram determines three-dimensional convex
  polytopes}}, Adv. Math. \textbf{220} (2009), 1771--1808.

\bibitem[Bia09b]{BiCrosscov09}
\bysame, \emph{{The cross covariogram of a pair of polygons determines both
  polygons, with a few exceptions}}, Adv. in Appl. Math. \textbf{42} (2009),
  519--544.
  
\bibitem[Bia13]{Bia_FTinCn_13+}
\bysame, \emph{{The covariogram and Fourier-Laplace transform in $\mathbb{C}^n$}}, 
  arXiv:1312.7816 [math.MG] (2013).

\bibitem[EE78]{MR0471018}
P.~F. Ehlers and E.~G. Enns, \emph{{Random paths through a convex region}}, J.
  Appl. Probab. \textbf{15} (1978), 144--152.

\bibitem[EE81]{EnnsEhlers-1981}
\bysame, \emph{{Random secants of a convex body generated by surface
  randomness}}, J. Appl. Probab. \textbf{18} (1981), no.~1, 157--166.

\bibitem[EE93]{MR1242019}
\bysame, \emph{{Notes on random chords in convex bodies}}, J. Appl. Probab.
  \textbf{30} (1993), 889--897.

\bibitem[Fir76]{MR0640871}
W.~J. Firey, \emph{{A functional characterization of certain mixed volumes}},
  Israel J. Math. \textbf{24} (1976), 274--281.

\bibitem[Gru]{gruber-private}
P.~M. Gruber, \emph{{private communication}}.

\bibitem[GS77]{Gelfand-Shilov-1964}
I.~M. Gel{\cprime}fand and G.~E. Shilov, \emph{{Generalized functions. {V}ol.
  1}}, Academic Press [Harcourt Brace Jovanovich Publishers], New York, 1964
  [1977].

\bibitem[GSW97]{Goodey-Schneider-Weil-1997}
P.~Goodey, R.~Schneider, and W.~Weil, \emph{{On the determination of convex
  bodies by projection functions}}, Bull. London Math. Soc. \textbf{29} (1997),
  82--88.

\bibitem[GZ98]{MR1623396}
R.~J. Gardner and G.~Zhang, \emph{{Affine inequalities and radial mean
  bodies}}, Amer. J. Math. \textbf{120} (1998), 505--528.

\bibitem[Had57]{MR21:1561}
H.~Hadwiger, \emph{{Vorlesungen {\"u}ber {I}nhalt, {O}berfl{\"a}che und
  {I}soperimetrie}}, Springer-Verlag, Berlin, 1957.

\bibitem[H{\"o}r03]{Hoermander-2003}
L.~H{\"o}rmander, \emph{{The analysis of linear partial differential operators.
  {I}}}, Springer-Verlag, Berlin, 2003.

\bibitem[Mat75]{MR0385969}
G.~Matheron, \emph{{Random {S}ets and {I}ntegral {G}eometry}}, John
  Wiley\thinspace \&\thinspace Sons, New York-London-Sydney, 1975.

\bibitem[Mat86]{Matheron8601}
\bysame, \emph{{Le covariogramme g{\'e}ometrique des compacts convexes des
  ${R}^2$}}, Technical report N-2/86/G, Centre de G{\'e}ostatistique, Ecole
  Nationale Sup{\'e}rieure des Mines de Paris, 1986.

\bibitem[McM77]{McMullen-1977}
P.~McMullen, \emph{{Valuations and {E}uler-type relations on certain classes of
  convex polytopes}}, Proc. London Math. Soc. (3) \textbf{35} (1977), 113--135.

\bibitem[McM90]{McMullen-1990}
\bysame, \emph{{Monotone translation invariant valuations on convex bodies}},
  Arch. Math. (Basel) \textbf{55} (1990), 595--598.

\bibitem[MSW01]{Martini-Swanepoel-Weiss-2001}
H.~Martini, K.~J. Swanepoel, and G.~Wei{\ss}, \emph{{The geometry of
  {M}inkowski spaces---a survey. {P}art {I}}}, Expo. Math. \textbf{19} (2001),
  97--142.

\bibitem[Nag92]{nagel_habil}
W.~Nagel, \emph{{Das geometrische {K}ovariogram and verwandte {G}r{\"o}{\ss}en
  zweiter {O}rdnung}}, Habilitationsschrift, Friedrich-Schiller-Universit{\"a}t
  Jena, 1992.

\bibitem[Nag93]{MR1232748}
\bysame, \emph{{Orientation-dependent chord length distributions characterize
  convex polygons}}, J. Appl. Probab. \textbf{30} (1993), 730--736.

\bibitem[Rud66]{Rudin-1966}
W.~Rudin, \emph{{Real and complex analysis}}, McGraw-Hill Book Co., New York,
  1966.

\bibitem[San04]{MR2162874}
L.~A. Santal{\'o}, \emph{{Integral geometry and geometric probability}}, second
  ed., Cambridge University Press, Cambridge, 2004.

\bibitem[Sch93]{Schneider-1993}
R.~Schneider, \emph{{Convex {B}odies: {T}he {B}runn-{M}inkowski {T}heory}},
  vol.~44, Cambridge University Press, Cambridge, 1993.

\bibitem[Sch11]{Schymura-2011}
D.~Schymura, \emph{{ Probabilistic Matching of Solid Shapes in Arbitrary
  Dimension}}, Ph.D. thesis, Freie Universit{\"a}t Berlin, 2011.

\bibitem[SW08]{MR2455326}
R.~Schneider and W.~Weil, \emph{{Stochastic and integral geometry}},
  {Probability and its Applications (New York)}, Springer-Verlag, Berlin, 2008.

\bibitem[Tho96]{Thompson-1996}
A.~C. Thompson, \emph{{Minkowski {G}eometry}}, vol.~63, Cambridge University
  Press, Cambridge, 1996.

\end{thebibliography}

\def\cprime{$'$}\def\cprime{$'$}\def\cprime{$'$}\def\cprime{$'$}\def\cprime{$'%
$}\def\cprime{$'$}
\providecommand{\bysame}{\leavevmode\hbox to3em{\hrulefill}\thinspace}
\providecommand{\MR}{\relax\ifhmode\unskip\space\fi MR }
\providecommand{\MRhref}[2]{%
  \href{http://www.ams.org/mathscinet-getitem?mr=#1}{#2}
}
\providecommand{\href}[2]{#2}

\end{document}